\newtheorem{algorithm}[theorem]{Algorithm}
\renewcommand{\u}{\ensuremath{\begin{pmatrix} \rho \\ \rho u \\ E \\ \end{pmatrix}}}
\newcommand{\f}{\ensuremath{\begin{pmatrix} \rho u \\ \rho u^2 + P \\  (E + P)u \\ \end{pmatrix}}}
\newcommand{\EE}{\mathbb{E}}
\newcommand{\PP}{\mathbb{P}}
\newcommand{\QQ}{\mathbb{Q}}
\newcommand{\RR}{\mathbb{R}}
\newcommand{\NN}{\mathbb{N}}
\newcommand{\calI}{\mathcal{I}}
\newcommand{\calA}{\mathcal{A}}
\newcommand{\calB}{\mathcal{B}}
\newcommand{\bfA}{\mathbf{A}}
\newcommand{\bfB}{\mathbf{B}}
\newcommand{\bfN}{\mathbf{N}}
\newcommand{\vecu}{\tilde{\mathbf{u}}_{in}}
\newcommand{\vecuSS}{\tilde{\mathbf{u}}_{in}^{**}}
\newcommand{\Std}{\mathbf{Std}}
\newcommand{\eps}{\epsilon}
\title{Probability of Failure in Hypersonic Engines Using Large Deviations}
\author{
George Papanicolaou\thanks{Mathematics Department, Stanford University ({\tt papanicolaou@stanford.edu})}
\and 
Nicholas West\thanks{Institute for Computational and Mathematical Engineering (ICME), Stanford University,
({\tt nickwest@stanford.edu})}
\and 
Tzu-Wei Yang\thanks{Institute for Computational and Mathematical Engineering (ICME), Stanford University 
({\tt twyang@stanford.edu})}}
\begin{document}

\maketitle

\begin{abstract}
	We consider a reduced order model of an air-breathing hypersonic engine with a time-dependent stochastic 
	inflow that may cause the failure of the engine. The probability of failure is analyzed by the 
	Freidlin-Wentzell theory, the large deviation principle for finite dimensional stochastic differential 
	equations. We compute the asymptotic failure probability by numerically solving the constrained 
	optimization related to the large deviation problem. A large-deviation-based importance sampling 
	suggested by the most probable inflow perturbation is also implemented to compute the probability of 
	failure of the engine. The numerical simulations show that the importance sampling method is much more 
	efficient than the basic Monte Carlo method.
\end{abstract}

\begin{keywords}
	Scramjet, Hypersonic Flows, Large Deviations, Freidlin-Wentzell Theory, Monte Carlo Methods, 
	Importance Sampling
\end{keywords}

\begin{AMS}
	76Kxx, 60F10, 65C05
\end{AMS}

\section{Introduction}

Operation of a scramjet (supersonic combustion ramjet) is difficult to accurately model with 
state-of-the-art codes due to the complex physical and chemical systems governing the flow of air through 
and the combustion in the engine. When coupled with environmental uncertainties, a priori design of a 
safety, high-performance operation plan (fuel schedule) is a lofty ambition. This paper numerically analyzes 
the probability of failure for a scaled engine operating at about Mach $2$. While there are many 
uncertainties outside of the engine that can effect the operability, we focus only on those within the core 
engine-system: the isolator, combustor and nozzle. The combustor is the location of the most complex (and 
least certain) chemistry: the amount of heat released here directly effects the performance of the system.

Thermal choking can result from excessive fueling, which decreases immediate performance and produces a 
shock that may travel through the isolator. If the shock reaches the entrance of the isolator, the engine 
will stall; this is called unstart. While there are many other causes of unstart (for example, thermal 
deformation of the engine \cite{Buchmann1979}), we focus on the unstart directly related to the inflow 
perturbations \cite{Sato1992} and the fueling of the engine. Iaccarino et al \cite{Iaccarino2011} studied 
the relationship between the amount of heat released and the operability of the engine over short time 
scales and with a low-fidelity model of the stochastic nature of the fueling. This paper studies the 
resulting uncertainty of the flow in the engine with a stochastic inflow Mach number.

The Euler equations are frequently employed to model compressible flows in aerospace models, especially in 
steady-state as a numerically tractable model when designing airfoils (see \cite{Jameson1994}). The 
time-dependent, quasi-one-dimensional form of the equations can be used to capture the geometry of a 
compression-expansion engine (see e.g. \cite{Shapiro1953}) and replicates many of the physical features of 
actual flows in scramjet engines. When used to model scramjets, a forcing term is added that models the heat 
release mechanism of fueling. It is well documented that these models capture the physical shock that 
results from thermal choking. Due to the one-dimensional nature of these equations, they can be solved 
quickly and are ideal as a reduced order model.

The large deviation principle is used to analyze events with exponentially small probability. The 
Freidlin-Wentzell theory, the large deviation principle for finite dimensional stochastic differential 
equations is the mathematical tool to compute the probability of failure: when the random perturbation in 
the stochastic differential equation is small, the probability of failure decreases exponentially fast and 
the rate of decay of the probability is governed by the minimum of the rate function over the event of 
interest. We numerically solve this constrained optimization problem to obtain the asymptotic probability of 
failure and the most probable path causing unstart under several interesting cases. This so-called the 
\textit{minimum action method} has been successfully applied to different model problems (see 
\cite{E2004,Zhou2008}).

Another obvious way to compute the probability of unstart is to use the Monte Carlo simulations. It has been 
extensively used in the engineering community to consider more elaborate scramjet models such as the 
two-dimensional model with the second order discretization, and it can be accelerated by using the 
adjoint-based sampling method \cite{Wang2012}. When the targeted probability is small, however, because of 
the natural limitation of the basic Monte Carlo method, one needs a excessively large number of samples to 
accurately estimate the probability, and such large amount of computations leads to the inefficiency of the 
basic Monte Carlo method. We use the large-deviation-based importance sampling technique, the importance 
sampling Monte Carlo method whose change of measure based on the minimizer of the large deviation principle. 
Our numerical results show that the large-deviation-based importance sampling outperforms the basic Monte 
Carlo method.

This paper is organized as follows: in Section \ref{sec:model} we discuss the equations that govern the 
flow, the geometry of the engine and the definition of the unstart; Section \ref{sec:general LDP} briefly 
introduces the classical Freidlin-Wentzell theory, the large deviation principle for finite dimensional 
stochastic differential equations, and the large deviations for the related Euler schemes. In Section 
\ref{sec:LDP for unstart}, we explain how to formulate the unstart of the scramjet as a large deviation 
problem. Section \ref{sec:numerical result of LD} shows the numerical results of the large deviation 
problems in Section \ref{sec:LDP for unstart} under different settings. In Section 
\ref{sec:importance sampling} we use the importance sampling Monte Carlo method based on the solution of the 
large deviation problems in Section \ref{sec:numerical result of LD} to directly estimate the probability of 
the unstart. Section \ref{sec:conclusion} concludes this paper. The table of parameters and the numerical 
PDE method for the governing equation are in the appendices.

\section{Model Problem}
\label{sec:model}

\subsection{Governing Equations and Engine Geometry}

The quasi-1D compressible Euler equations serve as our reduced model of the engine-combustion system and 
capture the phenomena of unstart due to fueling. This model was developed by Iaccarino et al 
\cite{Iaccarino2011} and is similar to the model developed by Bussing and Murmam \cite{Bussing1983}.  The 
quasi-1D compressible Euler equations are the following hyperbolic system:
\begin{equation}
	\label{eq:qce}
	\u_t + \f_x 
	= \frac{A'(x)}{A(x)} \left( \begin{pmatrix} 0 \\ P \\ 0 \\ \end{pmatrix} - \f\right) 
	+ \begin{pmatrix} 0 \\ 0 \\ f(x,t) \\ \end{pmatrix},
\end{equation}
where $\rho$ is the density of the fluid, $\rho u$ is the momentum and $E$ is the total energy. The pressure 
$P$ can be derived from an equation of state and we take $P=(\gamma-1)(E-\rho u^2/2)$ where $\gamma$ is the 
ratio of specific heats, taken to be $1.4$. The function $A(x)$ describes the cross-sectional area of the 
engine; we assume that the width is constant and thus the area varies as the height; see 
Figure \ref{fig:geom} for a sample height profile and the following mathematical definition:
\begin{equation*}
	A(x) = 
	\begin{cases} 
		A_0 -x \sin \theta_I, & -L_I < x < 0, \\ 
		A_0 + x \sin \theta_C, & 0 \leq x \leq L_C, \\
		A_0 +  L_C \sin \theta_C + (x - L_c) \sin \theta_E, & L_C < x \leq L_C + L_E.
	\end{cases}
\end{equation*}
The term $f(x,t)$ models the heat release due to fueling and takes the form:
\begin{equation}
	\label{eq:fuel}
	f(x,t) = 
	\begin{cases}
		f(t) f(x) \cdot \phi \cdot f_{stoch} \cdot H_{prop} \cdot A_0 
		\cdot \rho_0 \cdot u_0 / (L^2_C \cdot A(x)), & x \in [0, L_C],\\
		0,\quad &\text{otherwise.}
	\end{cases}
\end{equation}
where $f(x) = x^{1/3}$ following Iaccarino et al \cite{Iaccarino2011} and Riggins et al \cite{Riggins2006} 
and O'Byrne et al \cite{OByrne2000}. $\phi$ is the equivalence ratio and governs the amount of heat released 
into the system per unit time and $f(t)$ is an indicator for when the engine is fueling.

\begin{figure}
	\centering
	\includegraphics[trim = 0in 3.5in 0in 3.5in, clip, width=\textwidth]{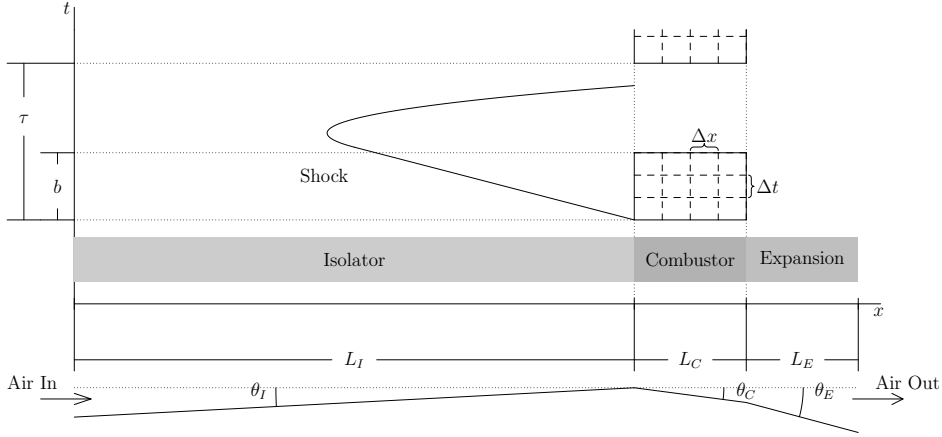}
	\caption{
	\label{fig:geom}
	Geometry of 1D engine model.  This figure is a schematic of both the engine geometry (shown in the lower 
	half of the image) and of the fueling profile and a resulting shock. $L_I$, $L_C$ and $L_E$ are the 
	lengths of the isolator, combustor and expansion region; $\theta_I$, $\theta_C$ and $\theta_E$ are their 
	respective angles; the air flows in from the left and out at the right. In the top half, the $x$ axis 
	gives the location in the engine and the $y$ axis is time. $T$ is the full length of a fueling period 
	and $b$ is the burst length.}	
\end{figure}

\subsection{Unstart of the Engine}

The Mach number, defined as $M = u / \sqrt{\gamma P / \rho}$, characterizes the behavior of the flow.  The 
Mach number of the flow is plotted Figures \ref{fig:low}-\ref{fig:stall} for different values of $\phi$, 
when the engine is fueled from $t = 0.5$ms to $t= 1.5$ms.  In all cases a shock develops, from a supersonic 
Mach number (Mach 2, green) to a subsonic Mach number (0.4, blue).  This shock extends into the isolator of 
the engine and persists after the fueling has stopped.  The distance into the isolator that the shock 
travels before receding and the amount of time it takes for the engine to return to a ``normal'' idle state 
are functions of how much head is injected.  In Figure \ref{fig:stall} the shock reaches the left boundary 
of the inlet; this is called ``unstart'' and the engine has failed and ceases to produce thrust. To 
determine if the engine has unstarted, the shock location, defined as:
\begin{equation*}
	x_{shock} = \sup \{ x \in [-L_c, 0] : M(x) \geq 1\},
\end{equation*}
is tracked. When the engine is in a idle state $x_{shock} = 0$; when the engine has failed $x_{shock}=-L_c$.

\begin{figure}
	\centering
	\subfigure[Low Fueling Rate]{\includegraphics[width=.48\textwidth]{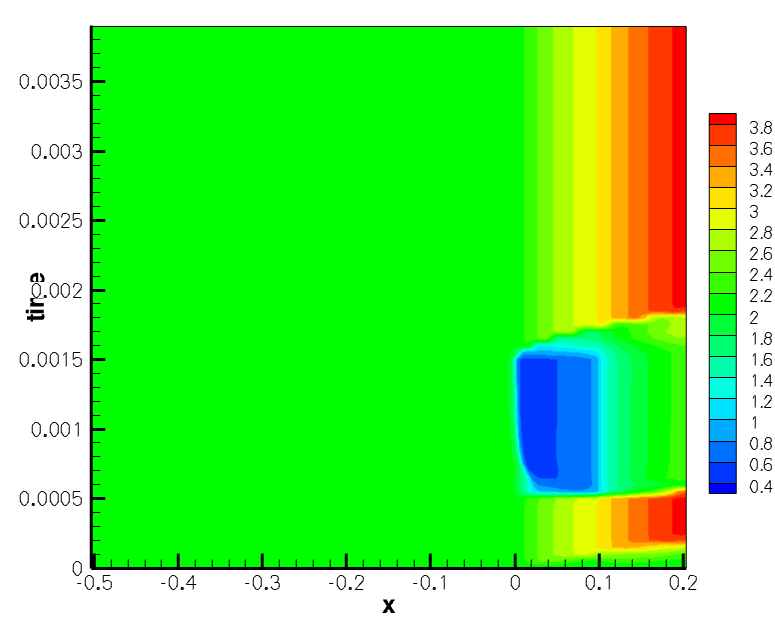} 
	\label{fig:low}}
	\subfigure[Moderate Fueling Rate]{\includegraphics[width=.48\textwidth]{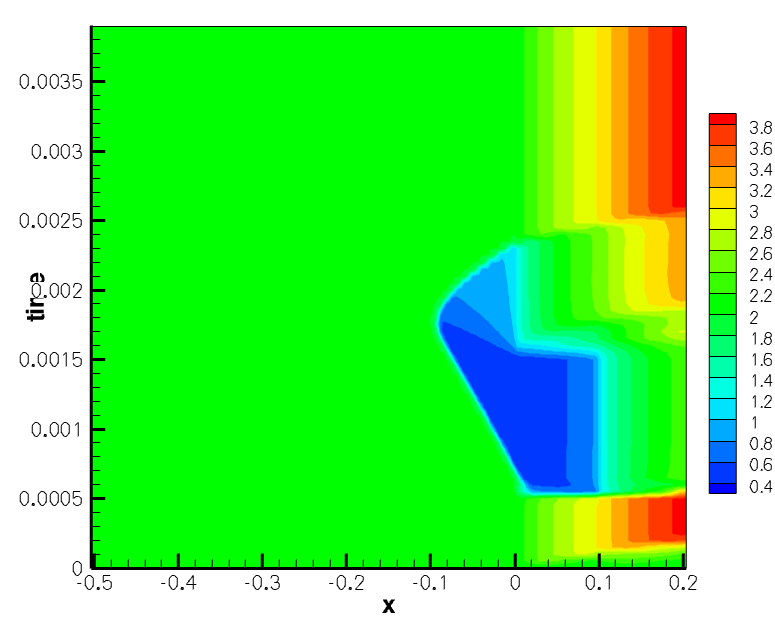} 
	\label{fig:mod}}
	
	\subfigure[Excessive/Stalling]{\includegraphics[width=.48\textwidth]{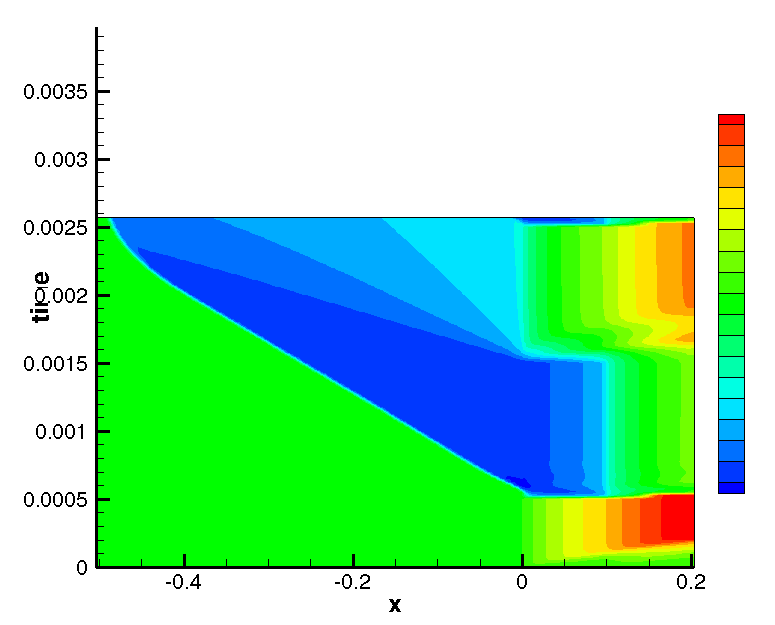} 
	\label{fig:high}}
	\subfigure[Compounding Shock]{\includegraphics[width=.48\textwidth]{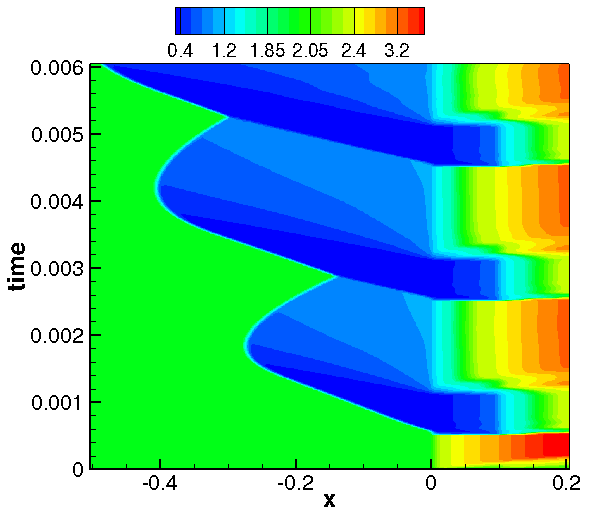} 
	\label{fig:stall}}
	\caption{Effect of heat release parameters on deterministic solutions of Mach number. (a)-(c):Inflow 
	Mach number = 2.0, $b = 1$ms.  Progressing from (a) to (c), more head is added per unit time, driving 
	the shock further into the isolator.  In (d), where the shock reaches the left boundary, the engine 
	stalls; this is called unstart. (d) Here, $T/b = 3.33$; the bursts are not spaced   
	sufficiently far apart, and the location of the shock compounds.}
\end{figure}

\subsection{Heat Release Model}

With this simplified model, there are two regimes of operation depending on $\phi$: for sufficiently small 
$\phi$ (Figure \ref{fig:low}) the shock does not leave the combustor, however insufficient thrust is 
produced; for $\phi$ greater than a threshold value $\phi^* \approx 0.25$, a shock forms in the isolator and 
will propagate until the engine unstarts.  This suggests that the simplest heat-release program that could 
result in sustained operation of the engine is to inject heat periodically. In this paper $f(t)$ is defined 
as follows:
\begin{equation*}
	f(t) = 
	\begin{cases} 
		1, & t \in [n\tau, n\tau+b),\\
		0, & t \in [n\tau+b, (n+1)\tau).
	\end{cases}
\end{equation*}
where $\tau$ is the length of the fuel cycle and $b$ is the length of the fuel burst (the amount of time 
fuel is being injected into the engine). With this heat release model, a potential cause of unstart is not 
spacing the fuel burst sufficiently far apart ($\tau/b$ too small) which causes the shock to build upon 
itself and eventually leads to unstart, see Figure \ref{fig:stall}.

The instantaneous thrust produced by the engine is given by
\begin{equation*}
	\mathrm{thrust}(t) = \dot{m}_e u_e - \dot{m}_i u_i + (P_e - P_i) A_e 
	= (A \rho u^2)_e - (A \rho u^2)_i + (P_e - P_i) A_e,
\end{equation*}
where the index $e$ is for quantities at the exit of the engine and $i$ if for quantities at the front of 
the inlet.  The mass flow, $\dot{m}$, is given by the state variable $\rho u$.  When the engine is stalled 
no additional thrust is produced.  The thrust produce is proportional to the amount of heat injected.

Depending on the extensive numerical experiments in \cite{West2011}, two fueling profiles are used 
in this paper: $\phi_S=0.78$, $\tau_S=0.5$ms, $b_S=0.1$ms (the short fuel cycle) and $\phi_L=0.78$, 
$\tau_L=2$ms, $b_L=0.4$ms (the long fuel cycle). Note that because $\phi_S=\phi_L$ and 
$\tau_S/b_S=\tau_L/b_L=5$, these two fueling profiles release the same amount of the heat so they generate 
roughly the same amount of thrust.

\subsection{Inflow Uncertainty and Its Effect on Unstart}

There are many sources of uncertainty that contribute to the total uncertainty in the operability of the 
scramjet engine. Those specific to the engine are the geometry (due to manufacturing errors or 
imperfections), the ratio of specific heats $\gamma$, the inflow conditions (the Mach number, density and 
pressure of air entering the engine) and the combustion processes. In this paper we only address the 
uncertainty of the inflow Mach number $M_{in}(t)$ that is modeled as: 
\begin{equation}
	\label{eq:inflow Mach number}
	M_{in}(t) = M_{in}(0) + \eps \sigma_M W_t
\end{equation}
with the initial condition $M_{in}(0)=2$, where $\epsilon$ and $\sigma$ are positive constants and $W_t$ is 
the standard Brownian motion. In addition, we assume that the inflow density $\rho_{in}(t)$ and the inflow 
pressure $P_{in}(t)$ are constant in time. Then the perturbation of $M_{in}(t)$ completely comes from the 
perturbation of the inflow speed $u_{in}(t)$

The inflow uncertainty can greatly affect the stability of the screamjet even though it operas normally 
under steady inflows. As shown in Figure \ref{fig:example of unstart due to stochastic inflow}, if the 
aforementioned fueling profiles are used (the short and long fuel cycles) and $M_{in}(t)=M_{in}(0)$ 
($\epsilon\sigma=0$), the scramjet operates normally. However, if stochastic inflows are used 
($\epsilon\sigma\neq0$), then the inflow may affect the locations of the shocks and therefore the 
probability of the unstart is nonzero.

\begin{figure}
	\centering
	\includegraphics[width=0.32\textwidth]{./figure/shortcycle_nonrandom_Machnumber}
	\includegraphics[width=0.32\textwidth]{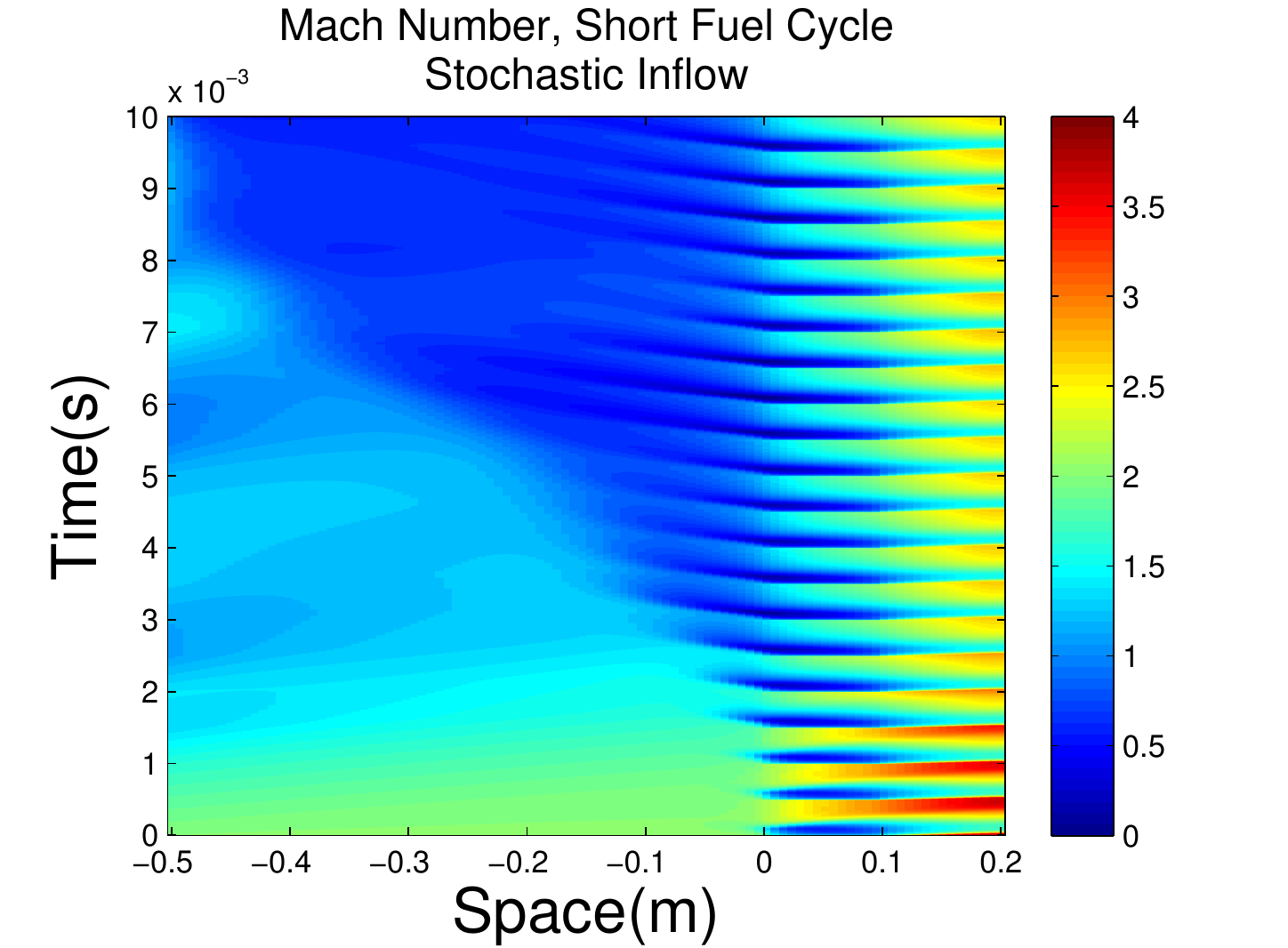}
	\includegraphics[width=0.32\textwidth]{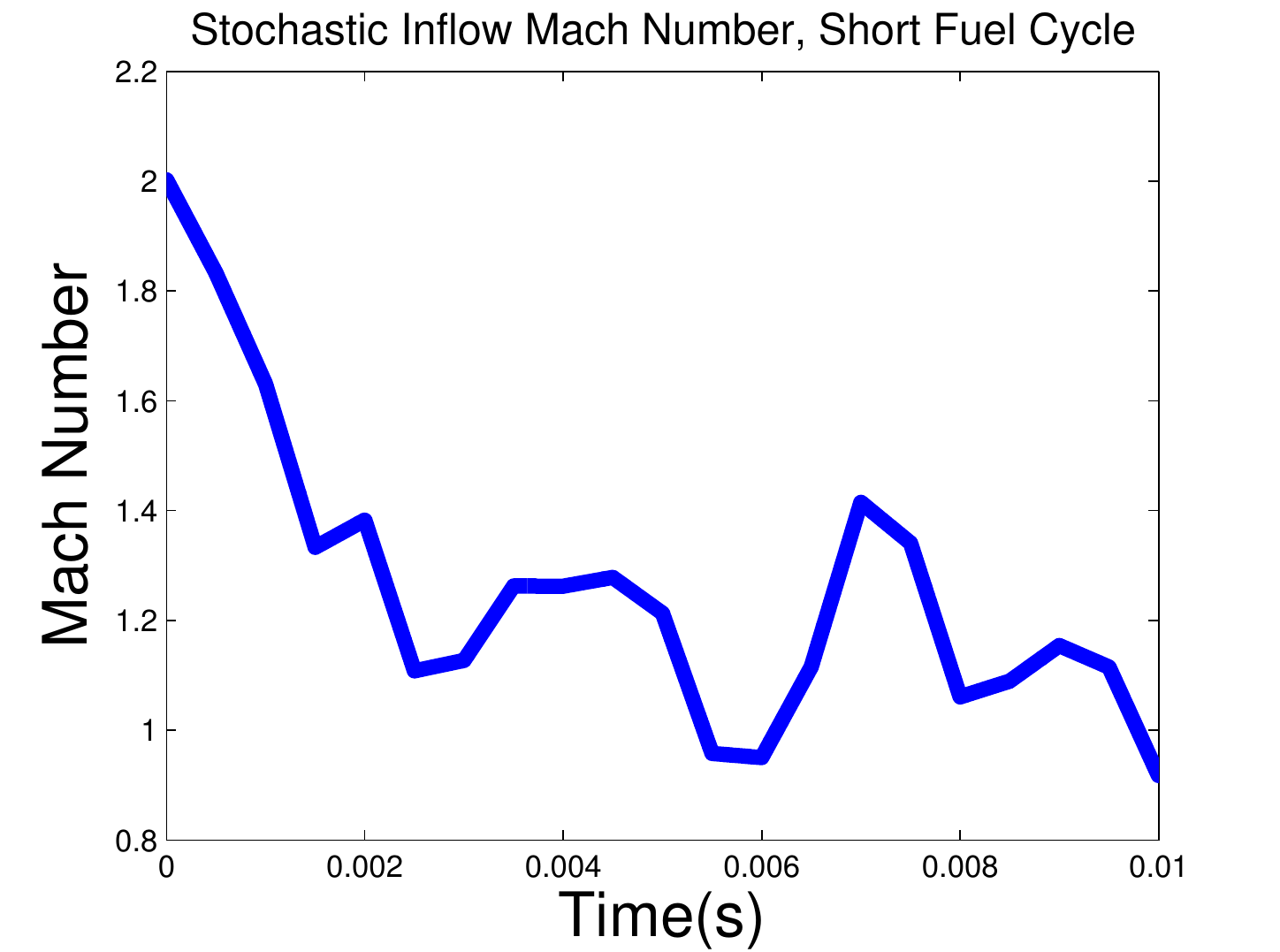}

	\includegraphics[width=0.32\textwidth]{./figure/longcycle_nonrandom_Machnumber}
	\includegraphics[width=0.32\textwidth]{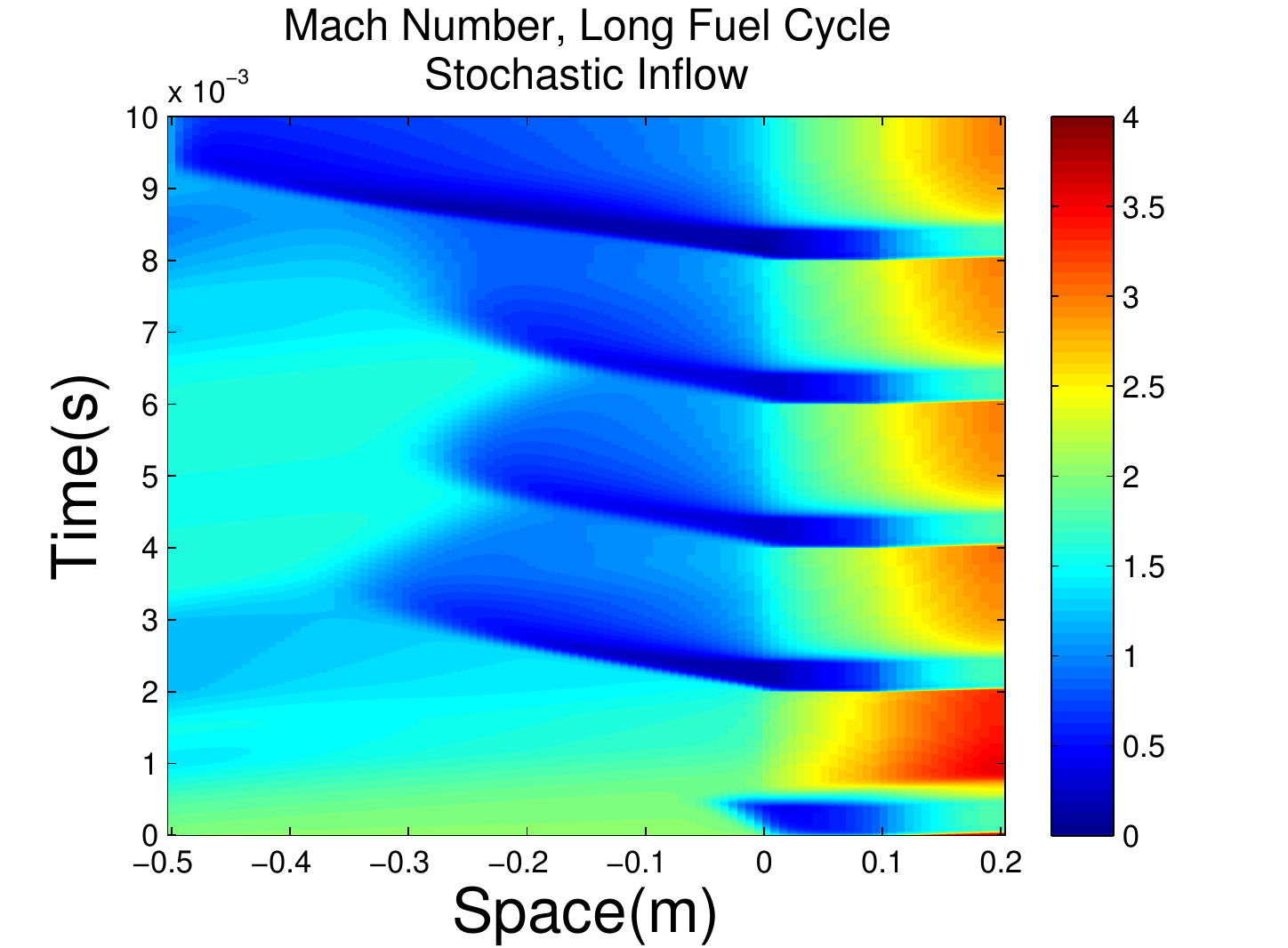}
	\includegraphics[width=0.32\textwidth]{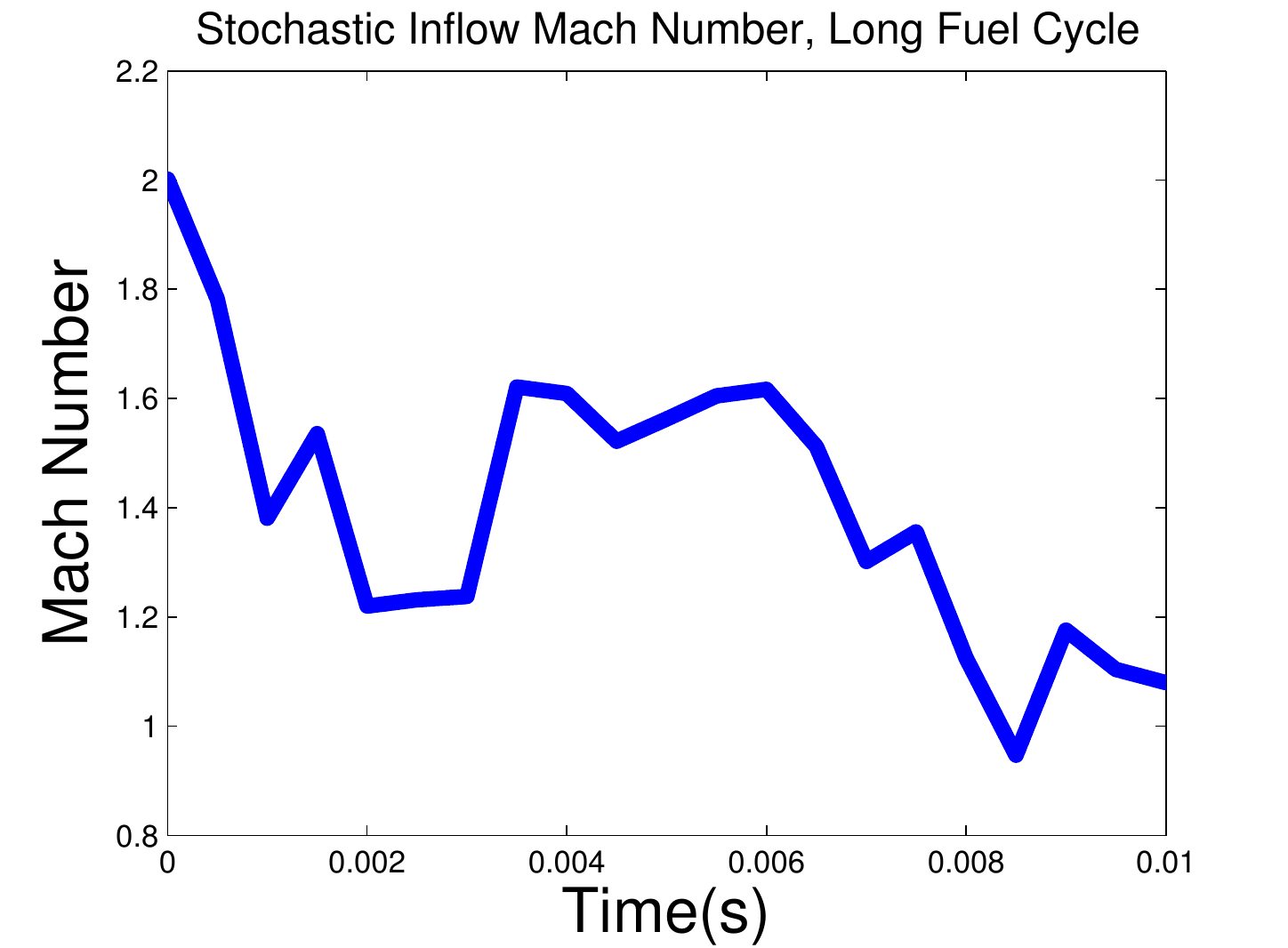}
	\caption{Effects of the inflow uncertainty on the scramjet stability. The left figures are the Mach 
	numbers in the engine with constant inflow (Mach $2$). The engine operates normally under both fueling 
	profiles (the short and long cycles) because the subsonic flows do not reach the left boundary. After 
	replacing the steady inflows by stochastic ones (shown in the right figures), however, the shock 	
	locations are affected by the inflow and reach the left boundary so the unstart happens (shown in the 
	middle figures).}
	\label{fig:example of unstart due to stochastic inflow}
\end{figure}

In this paper, the primary goal is to consider the probability of the unstart due to the stochastic inflow 
modeled as (\ref{eq:inflow Mach number}). We are especially interested in the case that the parameter 
$\epsilon$ in (\ref{eq:inflow Mach number}) is a small positive value, which lead to the large deviation 
analysis in the next section.

%
\section{Large Deviation Principle}
\label{sec:general LDP}

In this section, we first briefly review the classical Freidlin-Wentzell theory, the large deviation 
principle (LDP) for the stochastic differential equation. Then the analogous LDP for the Euler scheme of the 
discrete problem is introduced. The discrete problem provides a good understanding of the LDP for our 
unstart problem in the next section.

\subsection{The Freidlin-Wentzell Theory}
\label{sec:FW theory}

We consider the following stochastic differential equation (SDE):
\begin{equation}
	\label{eq:general SDE, continuous time}
	dX_t^\eps = \alpha(X_t^\eps)dt + \eps \sigma dW_t,
\end{equation}
where $X_0^\eps=x_0$ is deterministic, $W_t$ is the standard Brownian motion, and $\eps$ and $\sigma$ are 
positive constants. The standard theorem says that there exists a unique strong solution on any finite time 
interval $\left[0,T\right]$ if $\alpha:\RR\to\RR$ is uniformly Lipschitz continuous (see, for example, 
\cite{Oksendal2003}). We assume that $\bar{X}_t$ on $[0,T]$ with $\bar{X}_0=x_0$ is the unique solution of 
the following differential equation:
\[
	\frac{d}{dt}\bar{X}_t=\alpha(\bar{X}_t).
\]
It is well-known that as $\eps \to 0$, $X_t^\eps$ converges to $\bar{X}_t$ in probability, that is, 
for any $\delta>0$,
\[
	\lim_{\eps \to 0}\PP \left(\max_{t\in[0,T]}|X_t^\eps-\bar{X}_t|>\delta\right)=0.
\]

The Freidlin-Wentzell theory says that as $\eps \to 0$, the asymptotic probability that $X_t^{\eps}$ 
deviates from $\bar{X}_t$ can be computed as follows:
\begin{align*}
	-\inf_{x_t\in \mathring{\calA}} \calI(x_t) 
	& \leq\liminf_{\eps \to 0} \frac{1}{\eps^2} \log \PP(X_t^\eps\in \calA)\\
	& \leq\limsup_{\eps \to 0} \frac{1}{\eps^2} \log \PP(X_t^\eps\in \calA) 
	\leq -\inf_{x_t\in\bar{\calA}}\calI(x_t),
\end{align*}
where the \textit{rare event} $\calA$ is any subset of $C_{x_0}([0,T])$, the space of continuous paths on 
$[0,T]$ with the starting point $x_0$ endowed with the standard topology. $\mathring{\calA}$ and 
$\bar{\calA}$ stand for the interior and closure of $\calA$, respectively. The \textit{rate function} 
$\calI$ measures how much $X_t$ deviates from its typical behavior in the $L^2$ sense:
\[
	\calI(x_t)=
	\begin{cases}
		\frac{1}{2\sigma^2}\int_0^T [\frac{d}{dt}x_t-\alpha(x_t)]^2 dt, 
		&x_t=x_0+\int_0^ty(s)ds,~ y\in L^2([0,T]),\\
		+\infty, &\text{otherwise.}
	\end{cases}
\]

\textbf{Remark.} For the proof of the Freidlin-Wentzell theory, readers may refer to 
\cite[Section 5.6]{Dembo2010}. In fact, the complete version of the Freidlin-Wentzell theory works for any 
finite dimensional SDEs and $\sigma$ can be a function of $X_t$. However, because we simply model the inflow 
Mach number as a Brownian motion, we use the current version to keep the expression of the rate function 
simple.

An immediate observation is that if $\calA$ is \textit{regular}, that is,  
\[
	\inf_{x_t\in \mathring{\calA}}\calI(x_t) 
	= \inf_{x_t\in\calA}\calI(x_t) 
	= \inf_{x_t\in\bar{\calA}}\calI(x_t),
\]
then we can represent the asymptotic probability in the following way:
\[
	\PP(X_t^\eps\in \calA)\sim\exp\left(-\frac{1}{\eps^2}\inf_{x_t\in \calA}\calI(x_t)\right)
\]
for small $\eps$. In addition, if $\bar{X}_t\in \calA$, then $\PP(X_t^\eps\in \calA)\sim 1$ as $\eps\to 0$. 
Indeed, because $X_t^\eps\to\bar{X}_t$ as $\eps\to 0$, any regular event containing $\bar{X}_t$ should be of 
probability one as $\eps\to 0$.

\subsection{Large Deviations for the Euler Scheme}
\label{sec:LDP for Euler}

For the computational purpose, one can use the Freidlin-Wentzell theory to obtain the analytical rate 
function, then discretizes the rate function and the functional space, and finally obtains the probability 
by solving the numerical optimization. However, it is more informative to consider the discrete problem by 
the Euler method and derive the exact LDP for the discrete problem. Then this LDP can be solved directly by 
the numerical optimization.

We consider the following difference equation by the Euler scheme:
\begin{equation}
	\label{eq:general SDE, discrete time}
	X_{n+1}^\eps = X_n^\eps + \alpha(X_n^\eps)\Delta t + \eps \sigma \Delta W_{n+1},
\end{equation}
where $X_0^\eps=x_0$ is deterministic, $\{\Delta W_{n+1}\}_{n=0}^{N-1}$ are independent Gaussian random 
variables with mean zero and variance $\Delta t$, and $\eps>0$. We assume that $T=N\Delta t$. The joint 
density $p$ of $(X_1^\eps,\ldots,X_N^\eps)$ can be derived by the Markov property:
\begin{align*}
	&p(X_N^\eps=x_N, \ldots, X_1^\eps=x_1) 
	= \prod_{n=0}^{N-1} h(X_{n+1}^\eps=x_{n+1}|X_n^\eps=x_n)\\
	&\quad = \prod_{n=0}^{N-1} \frac{1}{\sqrt{2\pi\eps^2\sigma^2\Delta t}}
	\exp\left(-\frac{1}{2\eps^2\sigma^2\Delta t} (x_{n+1}-x_n-\alpha(x_n)\Delta t)^2\right)\\
	&\quad = (2\pi\eps^2\sigma^2\Delta t)^{-N/2} \exp\left(-\frac{\Delta t}{2\eps^2\sigma^2}
	\sum_{n=0}^{N-1}\left(\frac{x_{n+1}-x_n}{\Delta t}-\alpha(x_n)\right)^2\right).
\end{align*}
Let $x=(x_1,\ldots,x_N)$ and 
\[
	I(x)=\frac{\Delta t}{2\sigma^2}\sum_{n=0}^{N-1}\left(\frac{x_{n+1}-x_n}{\Delta t}-\alpha(x_n)\right)^2.
\]
Then given a set $\bfA\subset\RR^N$ the probability that $(X_1^\eps,\ldots,X_N^\eps)\in \bfA$ is
\begin{equation}
	\label{eq:exact probability by integrating I}
	\PP((X_1^\eps,\ldots,X_N^\eps)\in \bfA)
	= \int_\bfA (2\pi\eps^2\sigma^2\Delta t)^{-N/2} \exp\left(-\frac{1}{\eps^2}I(x)\right) dx.
\end{equation}

We use Laplace's method to compute the asymptotic probability as $\eps\to 0$. The basic idea is that as 
$\eps\to 0$, the mass of the integrand in (\ref{eq:exact probability by integrating I}) will concentrate at 
its maximizer, which is the minimizer of $I$. We can therefore use the minimum to compute the probability.
\begin{theorem}
	\label{thm:Laplace's method}
	(Laplace's method) Assume that $x^*=\arg\min_{x\in \bfA}I(x)$ with $x^*\in\mathring{\bfA}$, and $I(x)$ 
	grows at least quadratically. Then 
	\[
		\lim_{\eps\to 0}\eps^2\log\PP((X_1^\eps,\ldots,X_N^\eps)\in \bfA) = -I(x^*).
	\]
\end{theorem}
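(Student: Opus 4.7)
The plan is to prove matching upper and lower bounds for $\eps^2\log\PP$, both converging to $-I(x^*)$. Starting from the exact formula (\ref{eq:exact probability by integrating I}),
\begin{equation*}
\eps^2\log\PP\big((X_1^\eps,\ldots,X_N^\eps)\in\bfA\big)
= -\tfrac{N\eps^2}{2}\log(2\pi\eps^2\sigma^2\Delta t) + \eps^2\log\int_\bfA e^{-I(x)/\eps^2}\,dx.
\end{equation*}
The Gaussian normalization contributes a term of order $\eps^2|\log\eps|$, which vanishes as $\eps\to0$, so the task reduces to showing that $\eps^2\log\int_\bfA e^{-I(x)/\eps^2}\,dx \to -I(x^*)$.

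For the upper bound, since $I(x)\geq I(x^*)$ on $\bfA$, I would factor out $e^{-I(x^*)/\eps^2}$ and enlarge the domain of integration to $\RR^N$. The at-least-quadratic growth of $I$ — interpreted as $I(x)-I(x^*)\geq c|x-x^*|^2$ outside a bounded set, combined with local quadratic control coming from the (nondegenerate) Hessian at $x^*$ — yields a global lower bound of the form $I(x)-I(x^*)\geq c'|x-x^*|^2 - C'$. After the rescaling $y=(x-x^*)/\eps$ the resulting Gaussian-type integral over $\RR^N$ is bounded by $K\eps^N$ for some constant $K$, so
\begin{equation*}
\eps^2\log\int_\bfA e^{-I(x)/\eps^2}\,dx \leq -I(x^*) + \eps^2\log(K\eps^N) \;\longrightarrow\; -I(x^*).
\end{equation*}

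For the lower bound, I would use $x^*\in\mathring{\bfA}$ to pick a ball $B(x^*,r)\subset\bfA$. A Taylor expansion (assuming $I$ is $C^2$) gives $I(x)\leq I(x^*) + C|x-x^*|^2$ on this ball, so
\begin{equation*}
\int_\bfA e^{-I(x)/\eps^2}\,dx
\geq e^{-I(x^*)/\eps^2}\int_{B(x^*,r)} e^{-C|x-x^*|^2/\eps^2}\,dx
= e^{-I(x^*)/\eps^2}\,\eps^N\!\int_{B(0,r/\eps)} e^{-C|y|^2}\,dy.
\end{equation*}
The last integral converges to $(\pi/C)^{N/2}$ as $\eps\to0$, so $\eps^2\log$ of the whole expression is bounded below by $-I(x^*) + \eps^2\log(c\eps^N)\to -I(x^*)$, matching the upper bound.

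The main obstacle is making the quadratic-growth hypothesis precise enough to secure the tail estimate in the upper bound: one needs $I(x)-I(x^*)$ to dominate a positive-definite quadratic form not only near $x^*$ but out to infinity, so that the Gaussian integral over $\RR^N$ is both finite and of the correct $\eps^N$ order after rescaling. Once that is in place, the rest is a routine Laplace-method argument; a sharper analysis would even recover the full prefactor $(2\pi/\det H)^{N/2}$, but the logarithmic statement of the theorem only requires the cruder bounds above.
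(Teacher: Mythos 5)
Your lower bound is sound and is essentially the paper's argument (the paper gets by with mere continuity of $I$ at $x^*$, bounding $I\leq I(x^*)+\delta$ on a small neighborhood and letting $\delta\to0$, which matters slightly since $\alpha$ is only assumed Lipschitz and $I$ need not be $C^2$; but this is cosmetic). The upper bound, however, has a genuine gap, and it sits exactly at the step you flag as the ``main obstacle'': after factoring out $e^{-I(x^*)/\eps^2}$ you enlarge the domain of integration from $\bfA$ to $\RR^N$ and ask for a global bound $I(x)-I(x^*)\geq c'|x-x^*|^2-C'$. Such a bound cannot hold with a harmless constant, because $x^*$ is only the minimizer of $I$ \emph{over} $\bfA$: off $\bfA$ the function $I$ dips below $I(x^*)$ (in this application all the way to $0$, at the zero-noise path), so necessarily $C'\geq I(x^*)>0$. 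Moreover, even granting the inequality, the rescaled integral is bounded by $e^{C'/\eps^2}(\pi\eps^2/c')^{N/2}$, not by $K\eps^N$; the factor $e^{C'/\eps^2}$, which you silently dropped, contributes $+C'$ to the limit and leaves you with $\limsup_{\eps\to 0}\eps^2\log\PP\leq -I(x^*)+C'$, which is vacuous. Put differently, $\eps^2\log\int_{\RR^N}e^{-I(x)/\eps^2}dx\to-\min_{\RR^N}I$, which is $0$ here rather than $-I(x^*)$: the restriction to $\bfA$ cannot be discarded.

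The repair is the paper's truncation argument: keep the integral over $\bfA$, where $I\geq I(x^*)$ is actually available, and use the quadratic growth only to discard the far tail. Choose $d$ so large that $\inf_{\|x\|>d}I(x)>I(x^*)$ (possible since $I(x)\geq c\|x\|^2$ for large $\|x\|$); then $\int_{\bfA\cap\{\|x\|>d\}}(2\pi\eps^2\sigma^2\Delta t)^{-N/2}e^{-I(x)/\eps^2}dx$ is exponentially negligible compared with $e^{-I(x^*)/\eps^2}$ (the polynomial prefactor is harmless), while on the bounded piece $\bfA\cap\{\|x\|\leq d\}$ the crude bound $e^{-(I(x)-I(x^*))/\eps^2}\leq 1$ together with the finite volume of the ball gives $\eps^2\log(\cdot)\leq -I(x^*)+o(1)$. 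Combining the two pieces yields the upper bound that matches your (correct) lower bound.
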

\begin{proof}
	Because $I(x)$ grows at least quadratically, for any $\delta>0$, we can find a sufficiently large ball 
	$B_d=\{x\in\RR^N:\|x\|\leq d\}$ such that 
	\[
		\int_{B_d^C}(2\pi\eps^2 \Delta t)^{-N/2} \exp\left(-\frac{1}{\eps^2}I(x)\right)dx < \delta
	\]
	for all sufficiently small $\eps$. To compute the upper bound, it hence suffices to prove the case that 
	$\bfA$ is bounded. We rewrite the integral as 
	\[
		\exp\left(-\frac{1}{\eps^2}I(x^*)\right)
		\int_\bfA (2\pi\eps^2 \Delta t)^{-N/2} \exp\left(-\frac{1}{\eps^2}[I(x)-I(x^*)]\right)dx.
	\]
	The upper bound is 
	\begin{align*}
		&\eps^2 \log\PP((X_1^\eps,\ldots,X_N^\eps)\in \bfA)\\
		&\quad = -I(x^*) + \eps^2 \log(2\pi\eps^2\Delta t)^{-N/2} 
		+ \eps^2\log\int_\bfA\exp\left(-\frac{1}{\eps^2}[I(x)-I(x^*)]\right)dx\\
		&\quad \leq -I(x^*) + \eps^2\log(2\pi\eps^2\Delta t)^{-N/2} + \eps^2\log|\bfA|.
	\end{align*}
	Then we have the upper bound of the limit 
	\[
		\lim_{\eps\to 0}\eps^2\log\PP((X_1^\eps,\ldots,X_N^\eps)\in \bfA) \leq -I(x^*).
	\]
	
	To show the lower bound, we use the assumption that $x^*\in \mathring{\bfA}$. By the continuity of $I$, 
	for any $\delta>0$, there exists a neighborhood $N_\delta\subset \mathring{\bfA}$ of $x^*$ such that 
	$0\leq I(x)-I(x^*)\leq\delta$ for $x\in N_\delta$. Then we have the lower bound: 
	\begin{align*}
		&\eps^2\log\PP((X_1^\eps,\ldots,X_N^\eps)\in \bfA)
		\geq \eps^2 \log\PP((X_1^\eps,\ldots,X_N^\eps)\in N_\delta)\\
		&\quad = -I(x^*) + \eps^2\log(2\pi\eps^2\Delta t)^{-N/2} 
		+ \eps^2\log\int_{N_\delta}\exp\left(-\frac{1}{\eps^2} [I(x)-I(x^*)]\right)dx\\
		&\quad \geq -I(x^*) + \eps^2 \log(2\pi\eps^2\Delta t)^{-N/2} 
		+ \eps^2 \log(-|N_\delta|\frac{\delta}{\eps^2}).
	\end{align*}
	Therefore the lower bound is also obtained:
	\[
		\lim_{\eps\to 0}\eps^2 \log\PP((X_1^\eps,\ldots,X_N^\eps)\in \bfA) \geq -I(x^*).
	\]
\end{proof}

Readers can find that $I(x)\to \calI(x_t)$ as $\Delta t\to 0$. In fact, the following lemma says that 
$X^\eps_n$ is an exponentially good approximation of $X^\eps_t$.

\begin{lemma}
	\label{lma:exponential equivalence}
	(Exponential equivalence \cite[Lemma 5.6.9]{Dembo2010}) If $W_t$ in 
	(\ref{eq:general SDE, continuous time}) and $\Delta W_{n+1}$ in 
	(\ref{eq:general SDE, discrete time}) satisfy
	\[
		\Delta W_{n+1} = \int_{n\Delta t}^{(n+1)\Delta t} dW_s = W_{(n+1)\Delta t} - W_{n\Delta t}
	\]
	almost surely, then for any $\delta>0$,
	\[
		\lim_{\Delta t\to 0} \limsup_{\eps\to 0} 
		\log\PP\left(\max_n\sup_{t\in[n\Delta t,(n+1)\Delta t]} |X^\eps_t-X^\eps_n|>\delta\right) = -\infty.
	\]
\end{lemma}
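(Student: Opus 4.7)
The plan is to couple the continuous-time process $X_t^\eps$ and the Euler iterates $X_n^\eps$ through the given identity $\Delta W_{n+1} = W_{(n+1)\Delta t} - W_{n\Delta t}$, bound their pathwise distance by a linear combination of $\Delta t$ and the maximal Brownian oscillation on subintervals of length $\Delta t$, and then apply the reflection principle for Brownian motion to estimate the tail.

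For $t \in [n\Delta t, (n+1)\Delta t]$, I would split
\[
X_t^\eps - X_n^\eps = \bigl(X_t^\eps - X_{n\Delta t}^\eps\bigr) + \bigl(X_{n\Delta t}^\eps - X_n^\eps\bigr).
\]
The first summand equals $\int_{n\Delta t}^t \alpha(X_s^\eps)\,ds + \eps\sigma(W_t - W_{n\Delta t})$, which is bounded by $K\Delta t + \eps\sigma\,O_n$ (with $K$ a bound on $\alpha$ after localizing paths to a large ball if necessary), where $O_n := \sup_{s\in[n\Delta t,(n+1)\Delta t]}|W_s - W_{n\Delta t}|$. For the second summand, the coupling cancels the Brownian increments exactly when one subtracts $X_{n+1}^\eps$ from $X_{(n+1)\Delta t}^\eps$, so the discrete error $e_n := X_{n\Delta t}^\eps - X_n^\eps$ satisfies the recursion
\[
|e_{n+1}| \le (1 + L\Delta t)|e_n| + L\Delta t\,\bigl(K\Delta t + \eps\sigma\,M_{\Delta t}\bigr),
\]
where $L$ is the Lipschitz constant of $\alpha$ and $M_{\Delta t} := \max_n O_n$. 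Discrete Gronwall then yields $|e_n| \le e^{LT}\bigl(KT\Delta t + \eps\sigma T\,M_{\Delta t}\bigr)$, and combining the two estimates gives the deterministic bound
\[
\max_n \sup_{t\in[n\Delta t,(n+1)\Delta t]} \bigl|X_t^\eps - X_n^\eps\bigr| \le C_1\Delta t + C_2\,\eps\sigma\,M_{\Delta t}
\]
for constants $C_1, C_2$ independent of $\eps$ and $\Delta t$.

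For $\Delta t$ small enough that $C_1\Delta t < \delta/2$, the event in question forces $M_{\Delta t} > \delta/(2 C_2 \eps\sigma)$. The reflection principle supplies $\PP(O_n > a) \le 4\exp\bigl(-a^2/(2\Delta t)\bigr)$, and a union bound over the $T/\Delta t$ subintervals yields $\PP(M_{\Delta t} > a) \le (4T/\Delta t)\exp\bigl(-a^2/(2\Delta t)\bigr)$. Substituting $a = \delta/(2 C_2 \eps\sigma)$ and taking logarithms,
\[
\log \PP(\cdot) \;\le\; \log\!\Bigl(\tfrac{4T}{\Delta t}\Bigr) \;-\; \frac{\delta^2}{8 C_2^2 \sigma^2 \eps^2 \Delta t}.
\]
Sending $\eps \to 0$ at fixed $\Delta t$ drives the dominant $-1/\eps^2$ term to $-\infty$, and then $\Delta t \to 0$ preserves that limit; the same computation yields the stronger \emph{exponential equivalence} statement with $\eps^2 \log\PP$ on the left. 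The main technical subtlety is the discrete Gronwall step, which requires the exact noise cancellation produced by the coupling; and, because $\alpha$ is only assumed Lipschitz rather than bounded, a preliminary localization to a large ball, whose exit is itself an exponentially rare event, is needed to secure the constant $K$ used above.
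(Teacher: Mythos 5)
The paper never proves this lemma: it is imported verbatim as Lemma 5.6.9 of Dembo--Zeitouni, so there is no in-paper argument to compare against, and any self-contained proof is by definition a different route. What you give is the standard argument underlying that citation --- couple the continuous path and the Euler iterates through the same Brownian increments so the noise cancels in the discretization error, control the drift mismatch by Lipschitz continuity plus discrete Gronwall, reduce the event to a large oscillation of $W$ on some subinterval of length $\Delta t$, and kill that with the reflection principle and a union bound over the $T/\Delta t$ subintervals --- and the resulting estimate $\log\PP(\cdot)\le\log(4T/\Delta t)-\delta^2/(8C_2^2\sigma^2\eps^2\Delta t)$ is correct in structure; you also rightly note it yields the sharper $\eps^2\log\PP$ version, which is what Dembo--Zeitouni actually assert (the paper's statement drops the $\eps^2$, apparently a typo). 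The one step you only gesture at, localization because $\alpha$ is Lipschitz but not bounded, is genuinely needed and should be completed: bound $\PP(\cdot)\le\PP(\cdot\cap\{\sup_{t\le T}|X^\eps_t|\le R\})+\PP(\sup_{t\le T}|X^\eps_t|>R)$, note that linear growth and Gronwall give $\sup_t|X^\eps_t|\le(|x_0|+|\alpha(0)|T+\eps\sigma\sup_{t\le T}|W_t|)e^{LT}$, so the exit event forces $\eps\sigma\sup_t|W_t|>r(R)$ with $r(R)\to\infty$ and hence has probability at most $4\exp(-r(R)^2/(2T\sigma^2\eps^2))$. The key point that makes the order of limits work is that your constant $C_2=1+e^{LT}$ does not depend on the localization radius (only $C_1$, via the bound $K=K_R$ on $\alpha$ over the ball, does): for fixed $R$ take $\Delta t<\delta/(2C_1(R))$, send $\eps\to0$, then $\Delta t\to0$, and finally $R\to\infty$; this gives the stated double limit, and in the $\eps^2$-normalized form the bound $-\min\{\delta^2/(8C_2^2\sigma^2\Delta t),\,r(R)^2/(2T\sigma^2)\}$, which diverges in that order. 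The remaining blemishes are cosmetic: the Gronwall output should read $e^{LT}(K\Delta t+\eps\sigma M_{\Delta t})$ rather than your version with extra factors of $T$, which only shifts harmless constants.
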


Lemma \ref{lma:exponential equivalence} also tells us that we can effectively simulate a rare event by the 
Euler method. The standard theory shows that the Euler method has the order of accuracy $\sqrt{\Delta t}$; 
however, by the large deviations, the probability of a rare event is of order $\exp(-1/\eps^2)$ for small 
$\eps$. Therefore we might need to decrease $\Delta t$ exponentially in $\eps$ to obtain the correct 
simulation, and such exponential discretization will make the simulation computationally impossible. Thanks 
to Lemma \ref{lma:exponential equivalence}, we only need to choose a uniform $\Delta t$ and the Euler method 
can accurately simulate a rare event for all small $\eps$.
\section{Large Deviations for the Unstart}
\label{sec:LDP for unstart}

In this section, we derive the large deviation principle for the unstart of the scramjet. First we formulate 
the analytical problem. Readers can immediately identify that the analytical problem is an application of 
the Freidlin-Wentzell theory. However, as this problem can only be solved numerically, we also need the LDP 
for the discretized problem. In other words, we derive the LDP for the numerical PDE of the flow equation. 
The readers can also find that the theory is almost ready because of the derivation in the last section.

\subsection{Analytical Model}

Recall that the flow equation in the scramjet engine is 
\begin{equation}
	\label{eq:flow equation in the engine}
	\u_t + \f_x 
	= \frac{A'(x)}{A(x)} \left( \begin{pmatrix} 0 \\ P \\ 0 \\ \end{pmatrix} - \f\right) 
	+ \begin{pmatrix} 0 \\ 0 \\ f(x,t) \\ \end{pmatrix},
\end{equation}
for $x\in[-L_I,L_C+L_E]$ and $t\in[0,T]$. To solve this PDE, one also needs to specify the initial condition
and the inflow boundary condition. We assume that a suitable, deterministic initial condition is given. For 
simplicity, suppose that $P(t,-L_I)\equiv P_0$ and $\rho(t,-L_I)\equiv\rho_0$ for all $t\in[0,T]$; then the 
perturbation of the inflow Mach number $M_{in}(t)$ is entirely from that of the inflow speed 
$u_{in}(t):=u(t,-L_I)$ as $M=u/\sqrt{\gamma P/\rho}$. The inflow speed is modeled as a Brownian motion:
\[
	u_{in}(t) = u_{in}(0) + \eps \sigma_u W_t,
\]
with $u_{in}(0)=u_0=1300(m/s)$ that corresponds to Mach $2$ in our setting. Then from Section 
\ref{sec:FW theory}, $u_{in}(t)$ satisfies the large deviation principle with the rate function
\[
	\calI(u_{in})=
	\begin{cases}
		\frac{1}{2\sigma_u^2}\int_0^T \left(\frac{d}{dt}u_{in}(t)\right)^2 dt, 
		&u_{in}(t)=u_{in}(0)+\int_0^t y(s)ds,~ y\in L^2([0,T]),\\
		+\infty, &\text{otherwise.}
	\end{cases}
\]
by the Freidlin-Wentzell theory. The rare event $\calA$ is the set of all possible $u_{in}(t)$ that causes 
the unstart in the time horizon $[0,T]$, the event that the subsonic flow reaches the entrance of the 
isolator during $[0,T]$:
\[
	\calA = \{u_{in}(t):~ u_{in}(0)=u_{0},~ \exists t\in[0,T],~ x_{shock}(t)=-L_I\}.
\]
Then the probability of the unstart is obtained by the large deviation principle:
\[
	\PP(u_{in}\in\calA) \sim \exp\left(-\frac{1}{\eps^2}\inf_{u_{in}\in\calA}\calI(u_{in})\right).
\]
for small $\eps$. We note that in the optimization problem $\inf_{u_{in}\in\calA}\calI(u_{in})$, although 
the objective function $\calI(u_{in})$ is convex, it is very difficult to verify the convexity of the 
constraint set $\calA$, because it involves the analysis of the nonlinear hyperbolic system 
(\ref{eq:flow equation in the engine}).

\subsection{Numerical Model}

\subsubsection{Numerical PDE}
\label{sec:numerical PDE}

We uniformly discretize the space and time: $-L_I=x_0<\cdots<x_K=L_C+L_E$ and $0=t_0<\cdots<t_N=T$. As 
the rate function needs to be defined \textit{a priori}, we choose a uniform $\Delta t$ satisfying the CFL 
condition so that the numerical PDE method is stable when we solve the large deviation problem. By 
convention, $X^n_k$ denotes the average of the quantity X over the cell $(x_k,x_{k+1})$ at time $t_n$.
The local-Lax-Friedrichs (LLF) scheme is used to solve numerically the governing equation (\ref{eq:qce}). 
See also Appendix \ref{sec:component-wise LLF} for the details of the numerical PDE method.

%

\subsubsection{The Rate Function}

In the discrete case, similarly, we model the inflow speed $u_{in}(n)$ as a Gaussian random walk:
\begin{equation}
	\label{eq:full order u_in}
	u_{in}(n+1) = u_{in}(n) + \eps \sigma_u \Delta W_{n+1},\quad n=0,\ldots,N-1
\end{equation}
with $u_{in}(0)=u_0=1300(m/s)$. $\{\Delta W_{n+1}\}_{n=0}^{N-1}$ are independent Gaussian random variables 
with mean zero and variance $\Delta t$. From Section \ref{sec:LDP for Euler}, $u_{in}(n)$ satisfies the 
large deviation principle with the rate function:
\[
	I(u_{in})
	=\frac{\Delta t}{2\sigma_u^2}\sum_{n=0}^{N-1}\left(\frac{u_{in}(n+1)-u_{in}(n)}{\Delta t}\right)^2.
\]
We note that $I(u_{in})$ is a convex function in $u_{in}$.

\subsubsection{The Rare Event}

For the discrete problem, we define the unstart as the event that the subsonic flow is produced at 
$(x_1,x_2)$, the location next to the entrance of the isolator $(x_0,x_1)$. Hence the rare event $\bfA$ is 
the set of $u_{in}=(u_0^0,\ldots,u_0^N)$ causing the unstart on $[0,T]$:
\begin{equation}
	\label{eq:the set of unstart}
	\bfA = \{u_{in}=(u_0^0,\ldots,u_0^N):~ u_0^0=u_0,~ \min_{1\leq n\leq N} M_1^n \leq 1\},
\end{equation}
where $M_1^n$ is the Mach number on $(x_1,x_2)$ at time $t_n$ and is computed by the numerical PDE method.
Similar to the analytical case, although the rate function is convex, it is difficult to verify the 
convexity of $\bfA$.

\subsection{Numerical Optimization}

By the large deviation principle, the probability of the unstart is therefore 
\[
	\PP(u_{in}\in\bfA) \sim \exp\left(-\frac{1}{\eps^2}\inf_{u_{in}\in\bfA}I(u_{in})\right)
\]
for small $\eps$. We compute this probability by solving the nonlinear constrained optimization problem 
$\inf_{u_{in}\in\bfA}I(u_{in})$. We use the interior-point method (see \cite[Chapter 19]{Nocedal2006}) as 
the optimization algorithm.

Here we address an important issue about this optimization. Any gradient-based optimization algorithm, 
including the interior-point method, only obtains a local minimum, which might not be the global minimum. 
The global minimum is guaranteed only if the given problem is convex: both the objective function and the 
constraint set are convex. In this problem, although the objective function $I(u_{in})$ is convex, it is not 
clear if the constraint set $\bfA$ is. Thus in theory, we do not know if the answer we obtain is truly 
global. However, we solved this problem by multiple random initial guesses and obtained essentially the same 
result, therefore we have a high confidence that our answer is the global minimum.

\subsubsection{Dimension Reduction of the Optimization}

Another computational challenge of the optimization problem is its extremely large scale. The typical 
setting of this problem is as follows. The space domain $[-L_I, L_C+L_E]$ is discretized into $100$ points.
As the engine operates at the supersonic speed around Mach $2$, the CFL condition requires that 
$\Delta t \approx 10^{-6}$ seconds. If we simulate the model up to $0.01$ seconds, then we need $10^4$ 
points to discretize the time domain. Every time we evaluate the constraint, a complete run of the numerical 
PDE is performed, and therefore every iteration of the optimization algorithm is very expensive. 
Consequently, to optimize the rate function with $10^4$ variables is almost computationally impossible.

To speed up the optimization, we can reduce the degree of freedom of the inflow speed $u_{in}$. Indeed, 
although the CFL condition asks for the very fine grids in time, the resolution of the large deviation 
solution can be far lower than that. More precisely, we let $\tilde{N}\in\NN$ such that $m\tilde{N}=N$ for 
some $m\in\NN$, and $\tilde{u}_{in}(0), \tilde{u}_{in}(m), \ldots, \tilde{u}_{in}(N)$ are the Gaussian 
random walks:
\begin{equation}
	\label{eq:reduced order u_in}
	\tilde{u}_{in}((n+1)m) = \tilde{u}_{in}(nm) + \eps \sigma_u \Delta \tilde{W}_{n+1},\quad 
	n=0,\ldots,\tilde{N}-1,
\end{equation}
where $\{\Delta\tilde{W}_{n+1}\}_{n=0}^{\tilde{N}-1}$ are independent Gaussian random variables with mean 
zero and variance $m\Delta t$. The corresponding rate function is 
\begin{equation}
	\label{eq:reduced order rate function}
	I(\tilde{u}_{in})
	=\frac{m\Delta t}{2\sigma_u^2}\sum_{n=0}^{\tilde{N}-1}
	\left(\frac{u_{in}((n+1)m)-u_{in}(nm)}{m\Delta t}\right)^2.
\end{equation}
When we perform the numerical PDE, we linearly interpolate the other variables to obtain the inflow 
condition:
\begin{equation}
	\label{eq:linear interpolation of inflow speed}
	\tilde{u}_{in}(nm + k) 
	= \left(1-\frac{k}{m}\right)\tilde{u}_{in}(nm) 
	+ \frac{k}{m}\tilde{u}_{in}((n+1)m),\quad 1\leq k\leq m-1.
\end{equation}

The optimization cost is generally at least proportional to the number of variables. In our case, we choose 
$\tilde{N}=20$ and linearly interpolate the others, and therefore the algorithm only has to optimize over 
$20$ variables. The reduced problem is at least $500$ time faster than the full problem. A potential issue 
of the reduced problem is that if the result of the reduced problem is accurate enough. Our numerical 
experiment shows that $\tilde{N}=20$ is sufficient for this problem. In fact, in the later section, we find 
that even thought we double the resolution ($\tilde{N}=40$), the relative improvement is less than $1\%$.

\textbf{Remark.} By using the dimension reduction technique in this subsection, it is also possible to use 
the typical adaptive-time-discretization to solve the numerical PDE (\ref{eq:flow equation in the engine}) 
by considering the following inflow condition:
\[
	u_{in}(t) 
	= \left(1-\frac{t}{m}\right)\tilde{u}_{in}(nm) 
	+ \frac{t}{m}\tilde{u}_{in}((n+1)m),\quad t\in(nm\Delta t, (n+1)m\Delta t).
\]
However, our numerical experiments show that the constraint set $\bfA$ in this way is less smooth that of 
the uniform discretization and the resulting numerical optimization is also less robust. For this reason, we 
still use the uniform discretization and choose an appropriate $\Delta t$.

\subsubsection{Summary of the Numerical Optimization}

We finish this section by summarizing the numerical optimization.
\begin{algorithm}
	\label{alg:numerical LDP for unstart}
	\begin{enumerate}
		\item The goal of this section is to compute numerically the optimization problem 
		$\inf_{u_{in}\in\bfA}I(u_{in})$.
		
		\item The full problem is computationally expensive so we instead solve the reduced problem 
		$\inf_{\tilde{u}_{in}\in\bfA}I(\tilde{u}_{in})$.
		
		\item The interior-point method is the optimization algorithm. The objective function is given in 
		(\ref{eq:reduced order rate function}) and the numerical PDE of 
		(\ref{eq:flow equation in the engine}) is used to determine if $\tilde{u}_{in}\in \bfA$.
		
		\item Although our numerical experiments show that the optimization algorithm finds the same result even 
		with different random initial guesses, a good initial guess can greatly speed up the optimization 
		process. We let the initial guess $\{\tilde{u}_{in}(n)\}_{n=0}^N$ linear in $n$ and choose 	
		$\tilde{u}_{in}(N)$ to be the largest value so that $\{\tilde{u}_{in}(n)\}_{n=0}^N$ triggers the 
		unstart.
	\end{enumerate}
\end{algorithm}

\section{Numerical Results of the Large Deviations}
\label{sec:numerical result of LD}

In this section, we show the numerical results of the large deviation problem 
$\inf_{\tilde{u}_{in}\in\bfA}I(\tilde{u}_{in})$. The values of the parameters are listed in Appendix 
\ref{sec:parameters}.

%
%
%

\subsection{The Event of Subsonic Inflows}
\label{sec:subsonic inflows}

Before we go to the details of the large deviation result. We address an important subset of $\calA$ that 
causes the unstart: the situation that the scramjet receives a subsonic inflow. We define 
\begin{equation}
	\label{eq:the set of subsonic inflow}
	\calB = \{u_{in}(t):~ u_{in}(0)=u_0,~ 
	\min_{t\in[0,T]}M_{in}(t) = \min_{t\in[0,T]} u_{in}(t)/\sqrt{\gamma P_0/\rho_0} \leq 1\}.
\end{equation}
We note that $\calB\subset\calA$ and thus
$\inf_{u_{in}\in\calA}\calI(u_{in})\leq\inf_{u_{in}\in\calB}\calI(u_{in})$. Because $u_{in}(t)$ is modeled 
as a Brownian motion, $\inf_{u_{in}\in\calB}\calI(u_{in})$ can be solved explicitly:
\begin{equation}
	\label{eq:optimal path for a Brownian motion}
	u_{in}^* = \arg\inf_{u_{in}\in\calB}\calI(u_{in}) 
	= \left(1-\frac{t}{T}\right)u_0 + \frac{t}{T}\sqrt{\gamma P_0/\rho_0}
	= \left(1-\frac{t}{T}\right)u_0 + \frac{t}{2T}u_0.
\end{equation}
In other words, the minimizer $u_{in}^*$ is a straight line starting from $u_0$ and ending at $0.5 u_0$, and 
equivalently, the minimizer $M_{in}^*$ is a straight line starting from $2$ and ending at $1$. Further, we 
obtain an upper bound for $\inf_{u_{in}\in\calA}\calI(u_{in})$:
\begin{equation}
	\label{eq:upper bound for the rate function}
	\inf_{u_{in}\in\calA}\calI(u_{in}) 
	\leq \inf_{u_{in}\in\calB}\calI(u_{in})
	= \calI(u_{in}^*)
	= \frac{1}{2\sigma_u^2}\int_0^T \left(\frac{d}{dt}u_{in}^*(t)\right)^2 dt 
	= \frac{u_0^2}{8\sigma_u^2 T}.
\end{equation}
Thus we can conclude that when $\inf_{\tilde{u}_{in}\in\bfA}I(\tilde{u}_{in})$ is very close to this upper 
bound, the scramjet operates in a very stable region, because the probability of the unstart is as low as 
the theoretical limit.

\textbf{Remark.} One may analogously want to define $\bfB$ in the discrete sense. However, in general $\bfB$ 
is not necessarily a subset of $\bfA$ because in the definition of $\bfA$ (see 
(\ref{eq:the set of unstart})), what we check is if the Mach number in the second cell $M_1^n$ is less than 
$1$, but the inflow Mach number $M_{in}(n)=M_0^n \leq 1$ does not imply $M_1^n \leq 1$. In fact one often 
needs $M_{in}(n)=M_0^n \leq 1-\eta$ with a small positive $\eta$ to have $M_1^n \leq 1$. Therefore in the 
later numerical results, readers will find that the computed $\inf_{\tilde{u}_{in}\in\bfA}I(\tilde{u}_{in})$ 
in some cases will slightly higher than the continuum upper bound $u_0^2/(8\sigma_u^2 T)$. Obviously this 
discrepancy will be reduced as we refine the discretization, and we still use this upper bound 
(\ref{eq:upper bound for the rate function}) as a reference value in our numerical results.

\subsection{Impact of Fuel Cycles}
\label{sec:impact of fueling}

The function of the fueling control is 
\[
	f(t,x) = c\cdot\phi\cdot f(t)\cdot f(x),
\]
where $c$ is a constant and $f(x)$ is a fixed spatial function. $\phi$ is the mixing ratio and $f(t)$ is the 
control of the fuel cycle:
\[
	f(t)
	=\begin{cases}
		1, & t\in[i\tau,i\tau+b],\quad i\in\NN,\\
		0, & \text{otherwise.}
	 \end{cases}
\]
In every fuel cycle of length $\tau$ seconds, the fuel is injected in the first $b$ seconds.

Two representative fueling profiles are used: $\phi_S=0.78$, $\tau_S=0.5$ms, $b_S=0.1$ms and $\phi_L=0.78$,
$\tau_L=2$ms, $b_L=0.4$ms. These two profiles are extensively studied in \cite{West2011}. Because 
$\phi_S=\phi_L$ and $b_S/\tau_S=b_L/\tau_L$, the two profiles generate roughly the same amount of thrust. 
However, the numerical experiments in \cite{West2011} show that the first profile is more robust to prevent 
the engine from stalling. We observe the same qualitative behavior in the large deviation case.

\begin{figure}
	\centering
	\includegraphics[width=0.49\textwidth]{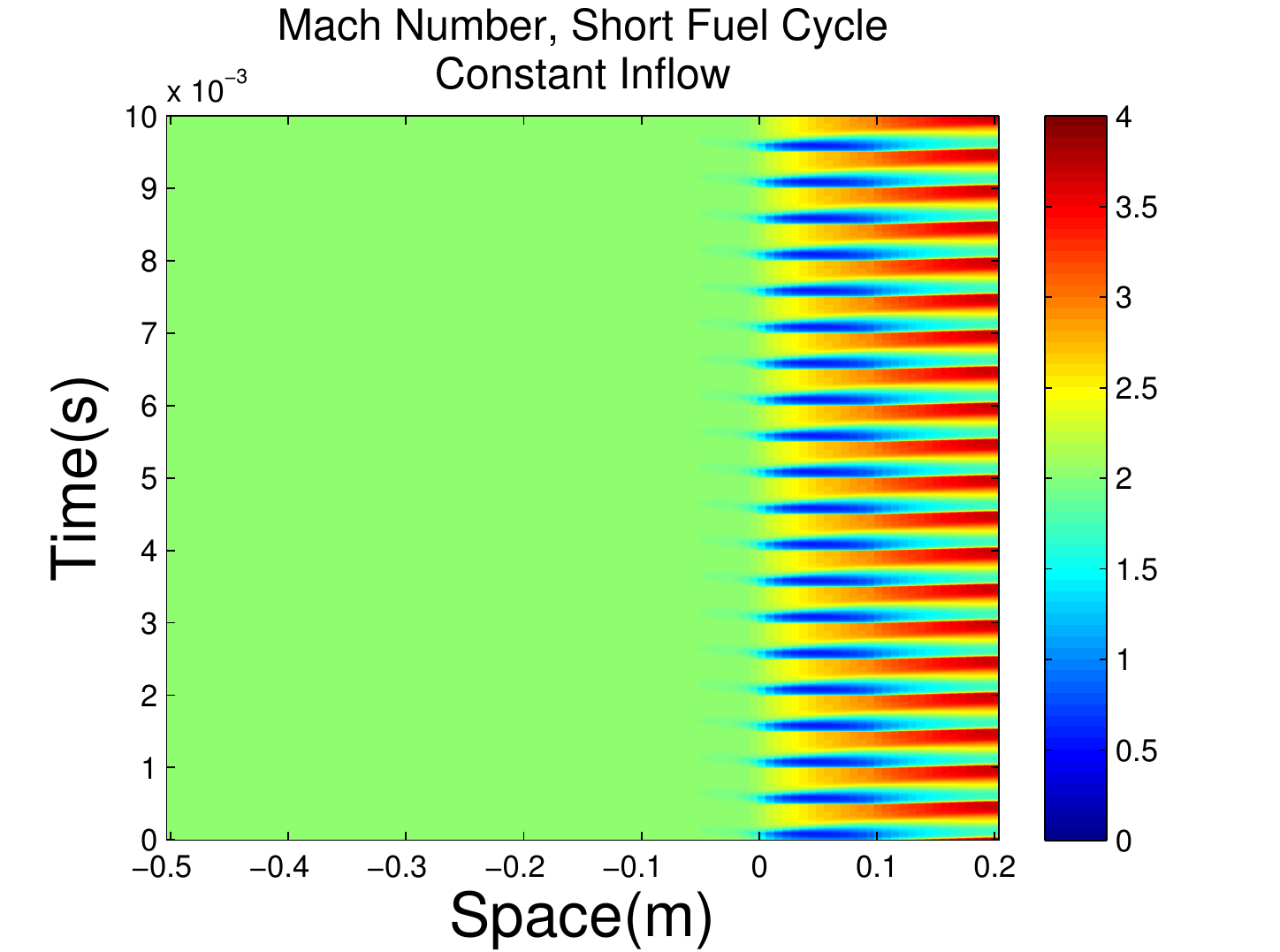}
	\includegraphics[width=0.49\textwidth]{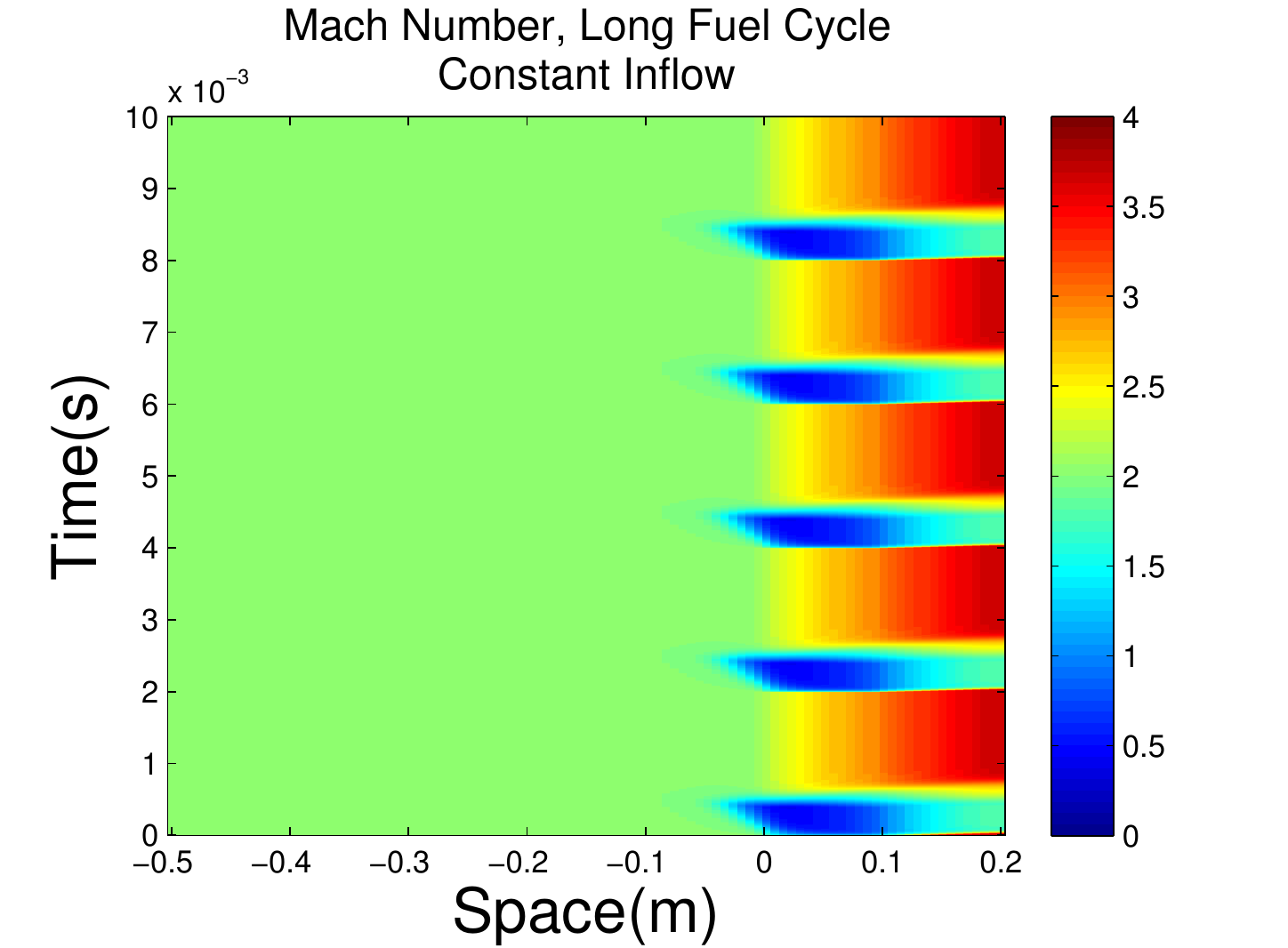}
	\caption{
	\label{fig:reference figure with constant inflow}
	The Mach numbers in the scramjet engine when a constant inflow (Mach $2$) is used. The engine operates 
	normally for both short and long fuel cycles.}
\end{figure}

\begin{figure}
	\centering
	\includegraphics[width=0.49\textwidth]{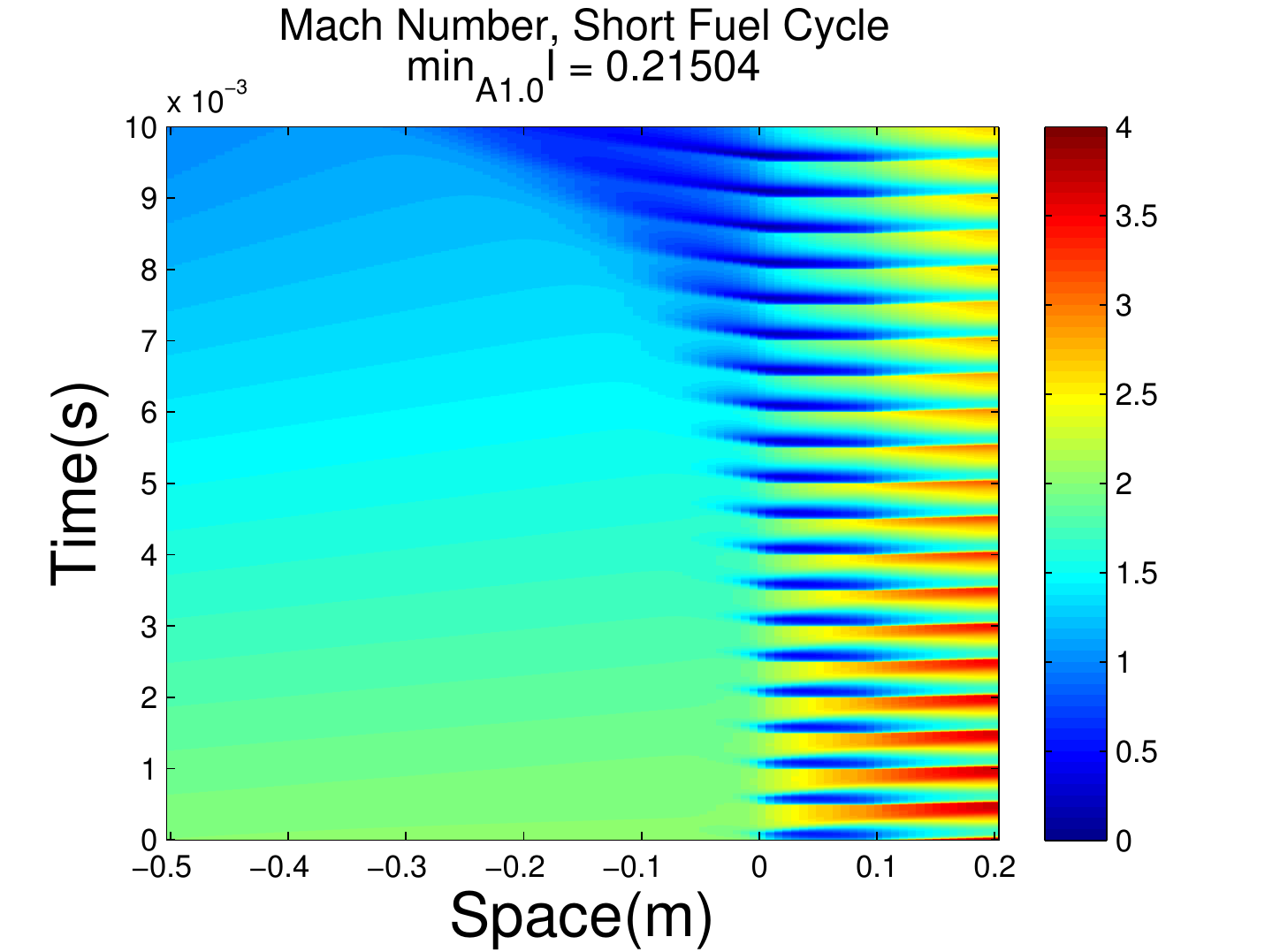}
	\includegraphics[width=0.49\textwidth]{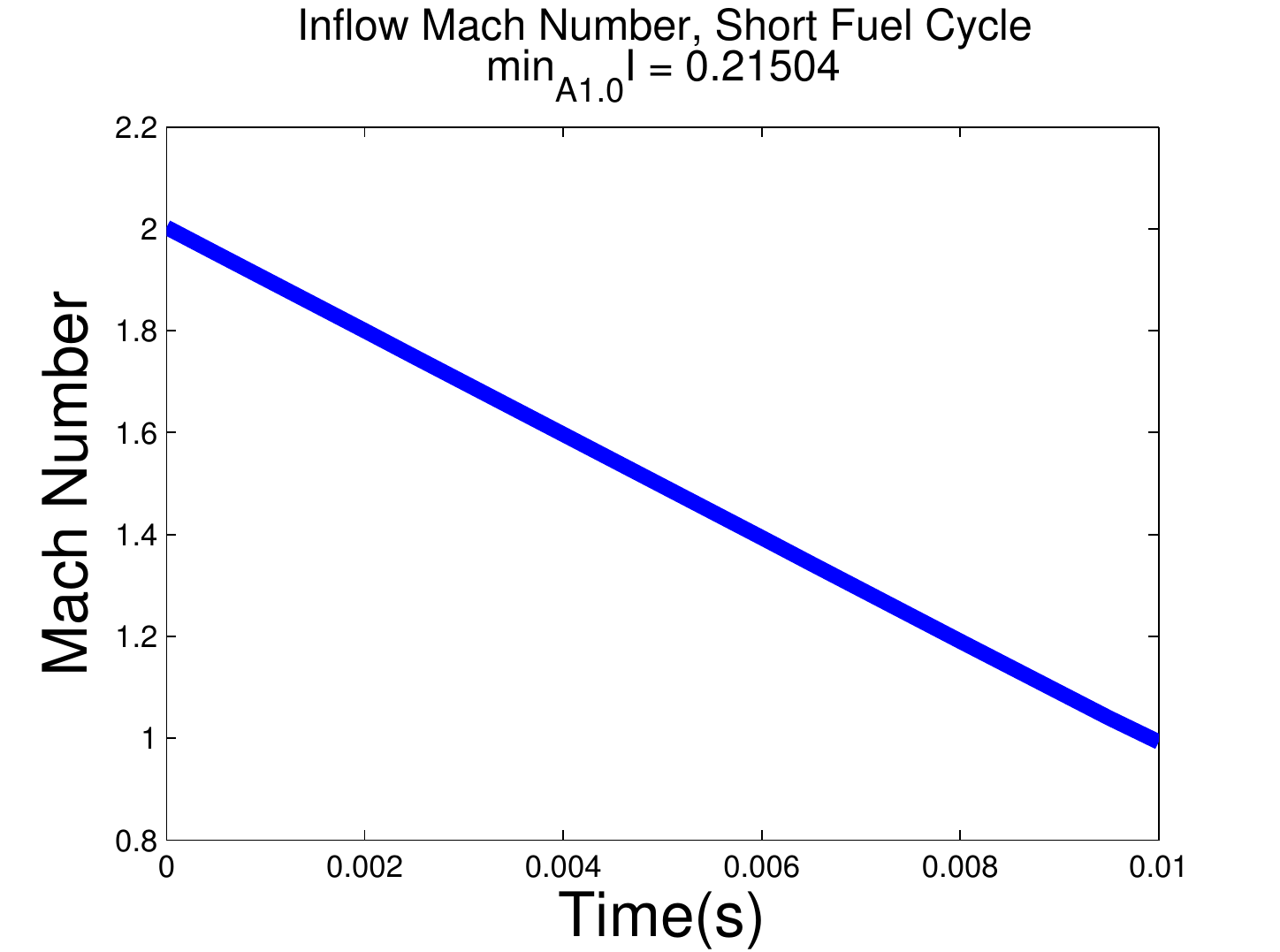}

	\includegraphics[width=0.49\textwidth]{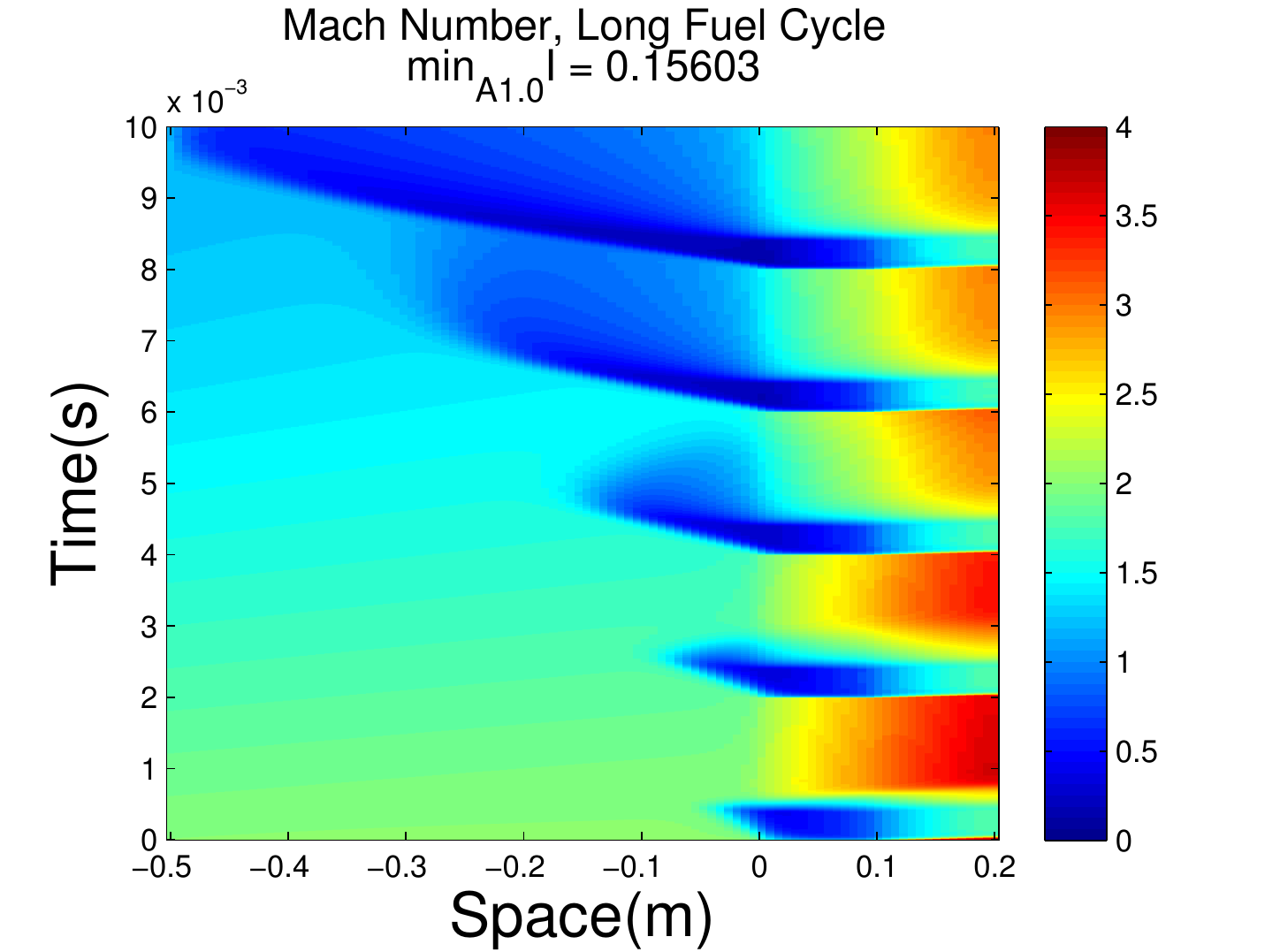}
	\includegraphics[width=0.49\textwidth]{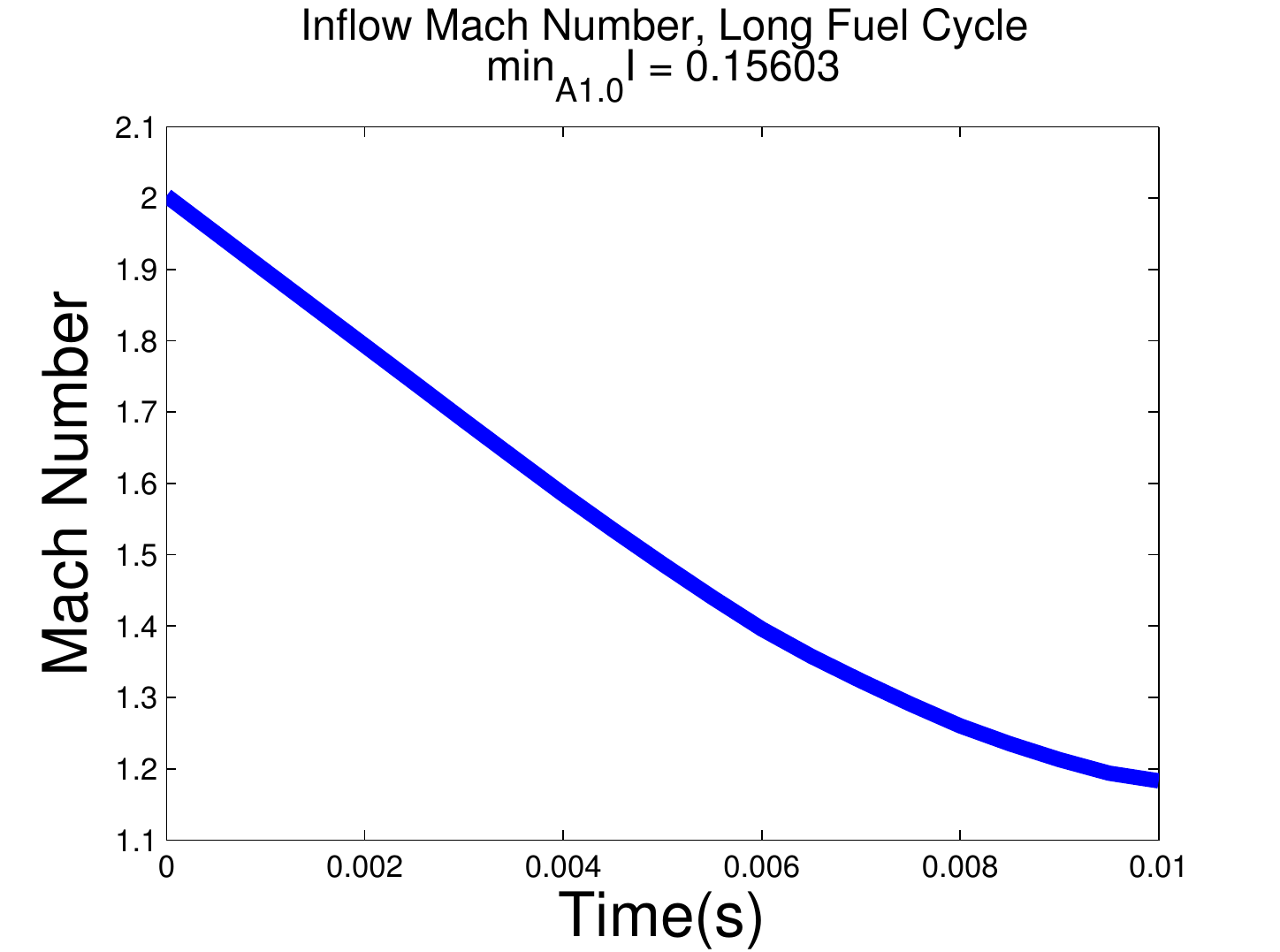}
	\caption{
	\label{fig:impact of fueling}
	The large deviation solutions of two different fuel cycles. The optimal $I$ of the short fuel 
	cycle ($\phi_S$, $\tau_S$, $b_S$) is higher than that of the long fuel cycle ($\phi_L$, $\tau_L$, 
	$b_L$). It means the short fuel cycle is a more stable strategy.}
\end{figure}

\begin{table}
	\centering
	\begin{tabular}{|c|c|c|c|}
		\hline 
		& Short Fuel Cycle & Long Fuel Cycle & $\calI(u_{in}^*)$ \tabularnewline
		\hline
		$\inf_{\tilde{u}_{in}\in\bfA}I(\tilde{u}_{in})$ 
		& $0.21504$ & $0.15603$ & $0.21125$ \tabularnewline
		\hline 
	\end{tabular}
	\caption{
	\label{tab:the optimal rate functions for different fuel cycles}
	The optimal values of the rate function for the short and long fuel cycles.}
\end{table}

We can see in Figure \ref{fig:reference figure with constant inflow} that with the constant inflow (Mach 
$2$), the engine operates operates normally for both fuel cycles. No subsonic flows reach the entrance
of the isolator. In this section, Figure \ref{fig:reference figure with constant inflow} will serve as the 
reference case representing the normal situation.

When the inflow perturbations are considered, we solve the large deviation problem 
$\inf_{\tilde{u}_{in}\in\bfA}I(\tilde{u}_{in})$. The solutions are plotted in Figure 
\ref{fig:impact of fueling} (left). The optimal $I$ of the short fuel cycle ($0.21504$) is about $1/3$ 
higher than that of the long fuel cycle ($0.15603$). It means that when $\epsilon$ is small, the 
probability of the unstart with the short fuel cycle case is lower than that with the long fueling case. 
This observation is qualitatively consistent with the study in \cite{West2011} by the Monte Carlo 
simulations.

Figure \ref{fig:impact of fueling} also tells us an interesting information: with the short fuel cycle, 
the minimizer for $\inf_{\tilde{u}_{in}\in\bfA}I(\tilde{u}_{in})$ is very close to a straight line starting 
from $2$ and ending at $1$, which is the minimizer $u_{in}^*(t)$ for the large deviation problem 
$\inf_{u_{in}\in\calB}\calI(u_{in})$ discussed in Section \ref{sec:subsonic inflows}. In addition, its 
optimal value of the rate function ($0.21504$) is also close to the continuum upper bound ($0.21125$), which 
says that with the short fuel cycle, the scramjet has very strong resistance to the unstart.

\subsection{Sensitivity to Constraints}
\label{sec:margin}

We can slightly modify the constraint set $\bfA$ to see the sensitivity of the large deviation problem to 
the change of the constraint. More precisely, we define 
\begin{align*}
	\bfA_{0.8} &= \{u_{in}=(u_0^0,\ldots,u_0^N):~ u_0^0=u_0,~ \min_{1\leq n\leq N} M_1^n \leq 0.8\},\\
	\bfA_{1.2} &= \{u_{in}=(u_0^0,\ldots,u_0^N):~ u_0^0=u_0,~ \min_{1\leq n\leq N} M_1^n \leq 1.2\},
\end{align*}
and let $\bfA_{1.0}:=\bfA$ in (\ref{eq:the set of unstart}). It is reasonable to argue that 
$\bfA_{0.8}\subset\bfA_{1.0}\subset\bfA_{1.2}$ (by assuming that the Mach number is continuous in time.) 
Similarly, we can also define $\calB_{0.8}$, $\calB_{1.0}$ and $\calB_{1.2}$ according to 
(\ref{eq:the set of subsonic inflow}), and derive the continuum upper bounds by the formulas similar to 
(\ref{eq:optimal path for a Brownian motion}) and (\ref{eq:upper bound for the rate function}) (see also 
Table \ref{tab:the optimal rate functions over different A}).

\begin{figure}
	\centering
	\includegraphics[width=0.49\textwidth]{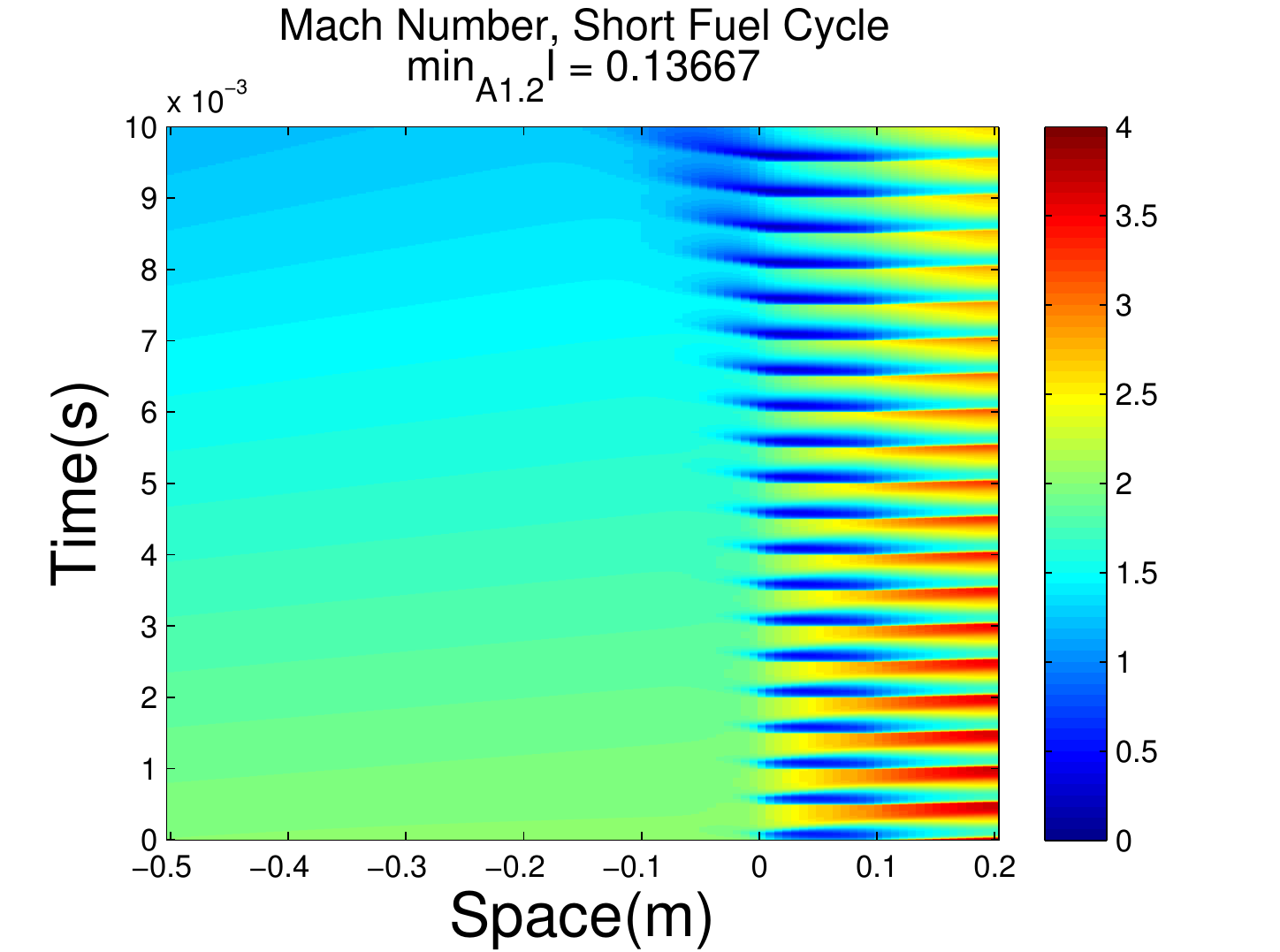}
	\includegraphics[width=0.49\textwidth]{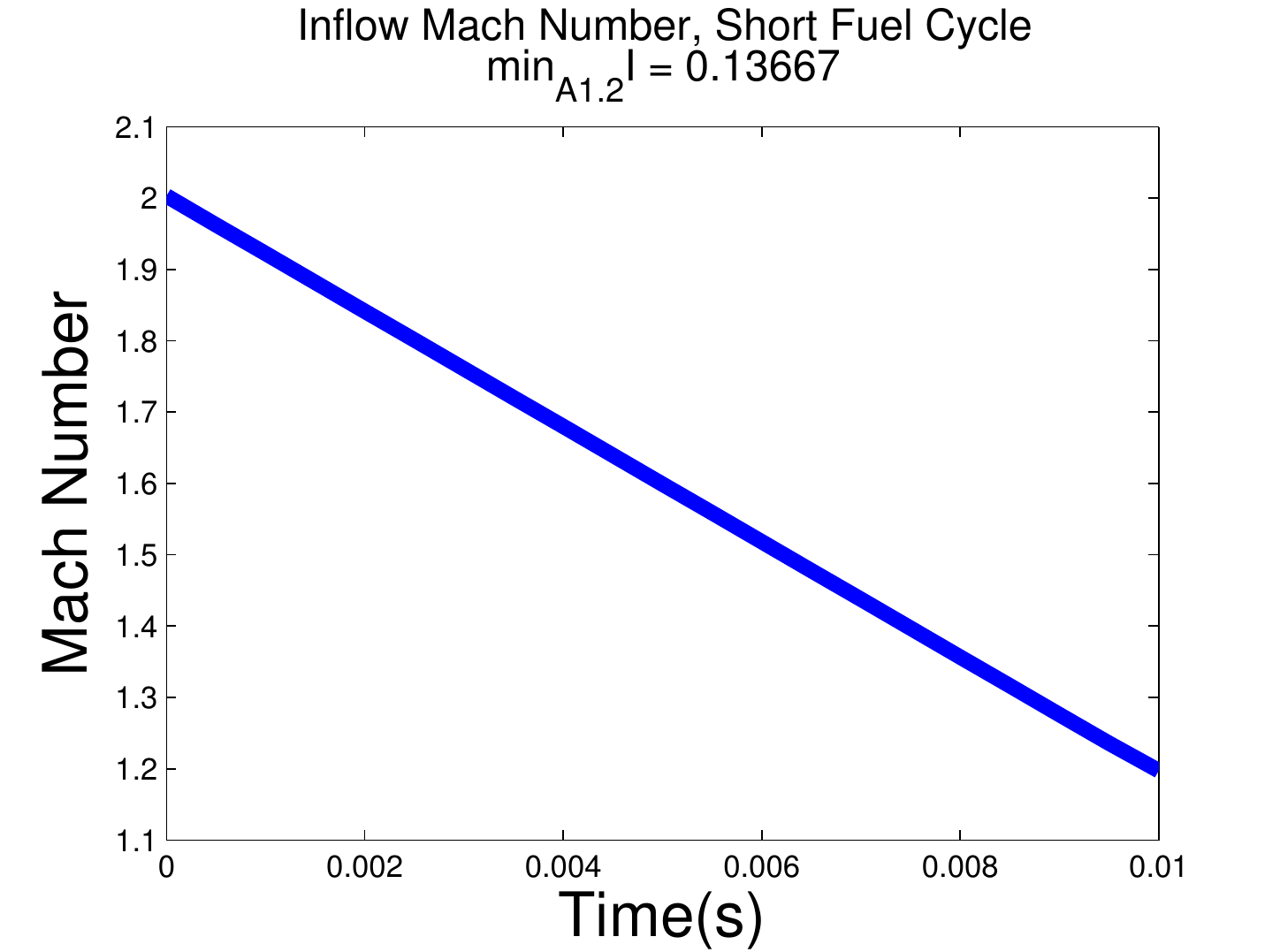}
	
	\includegraphics[width=0.49\textwidth]{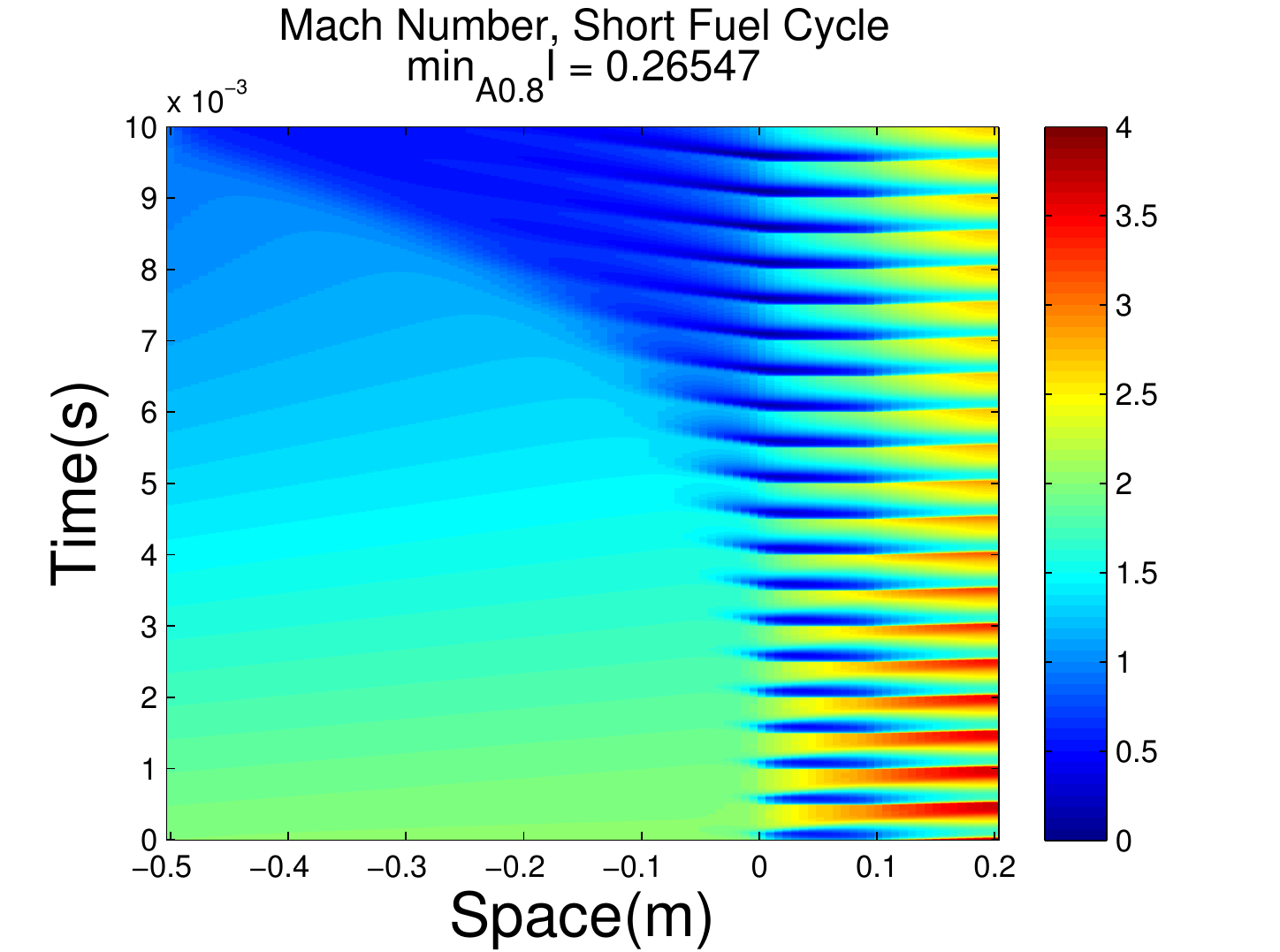}
	\includegraphics[width=0.49\textwidth]{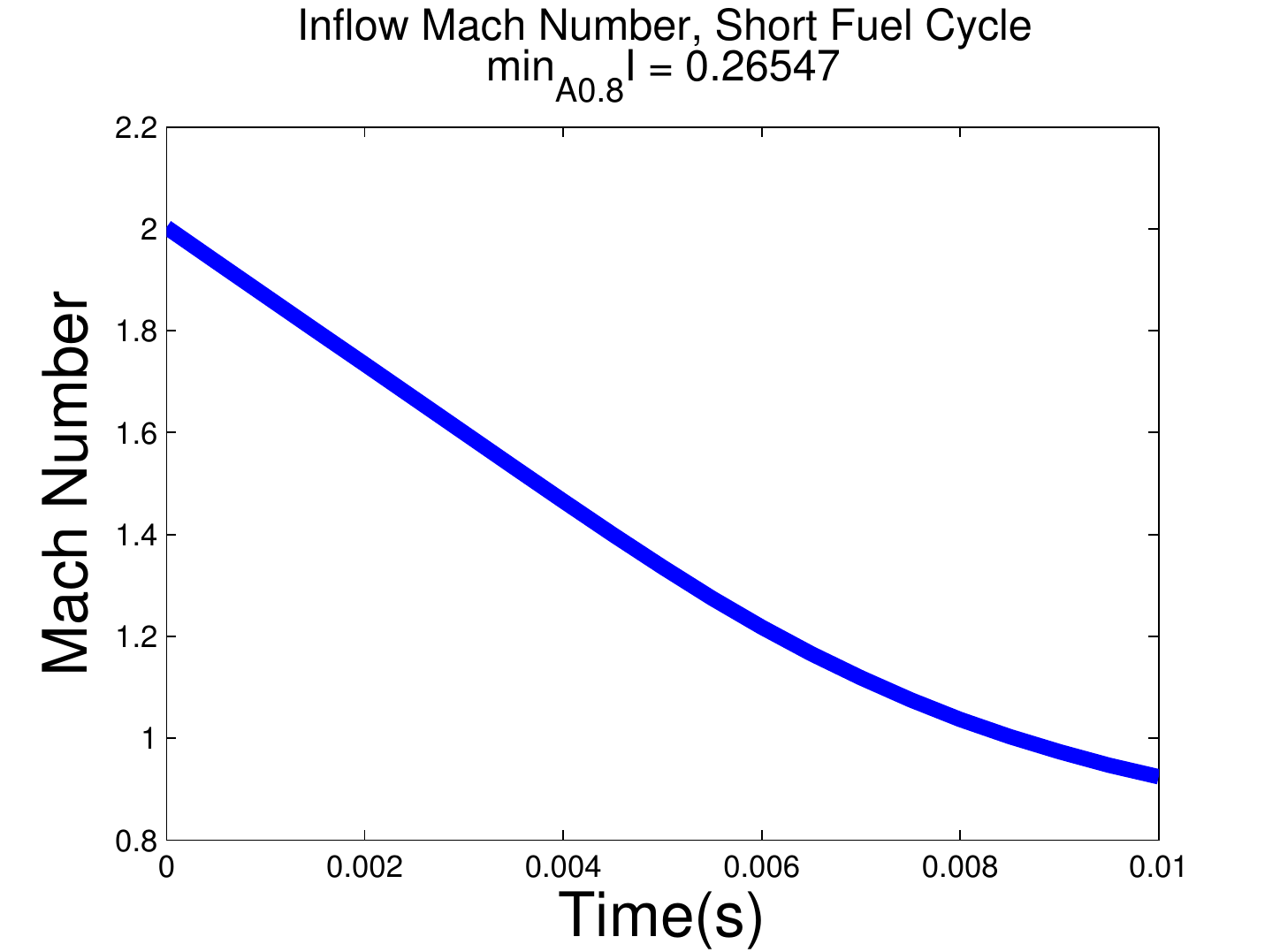}
	
	\caption{
	\label{fig:change of the constraint with the short fuel cycle}
	The results of the large deviation problem by changing the constraint set $\bfA_{1.0}$ to $\bfA_{0.8}$ 
	and $\bfA_{1.2}$ with the short fuel cycle.}
\end{figure}

\begin{figure}
	\centering
	\includegraphics[width=0.49\textwidth]{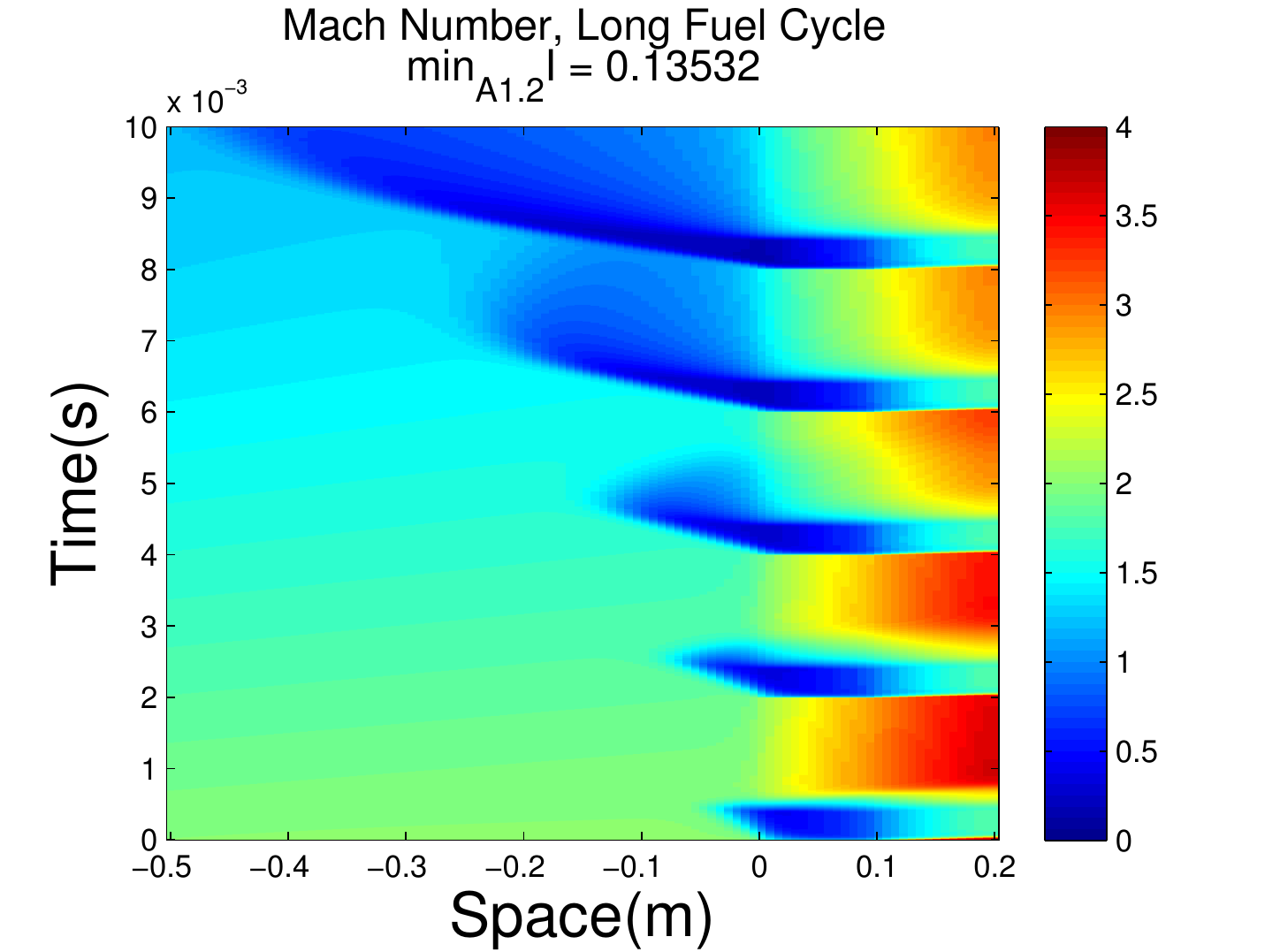}
	\includegraphics[width=0.49\textwidth]{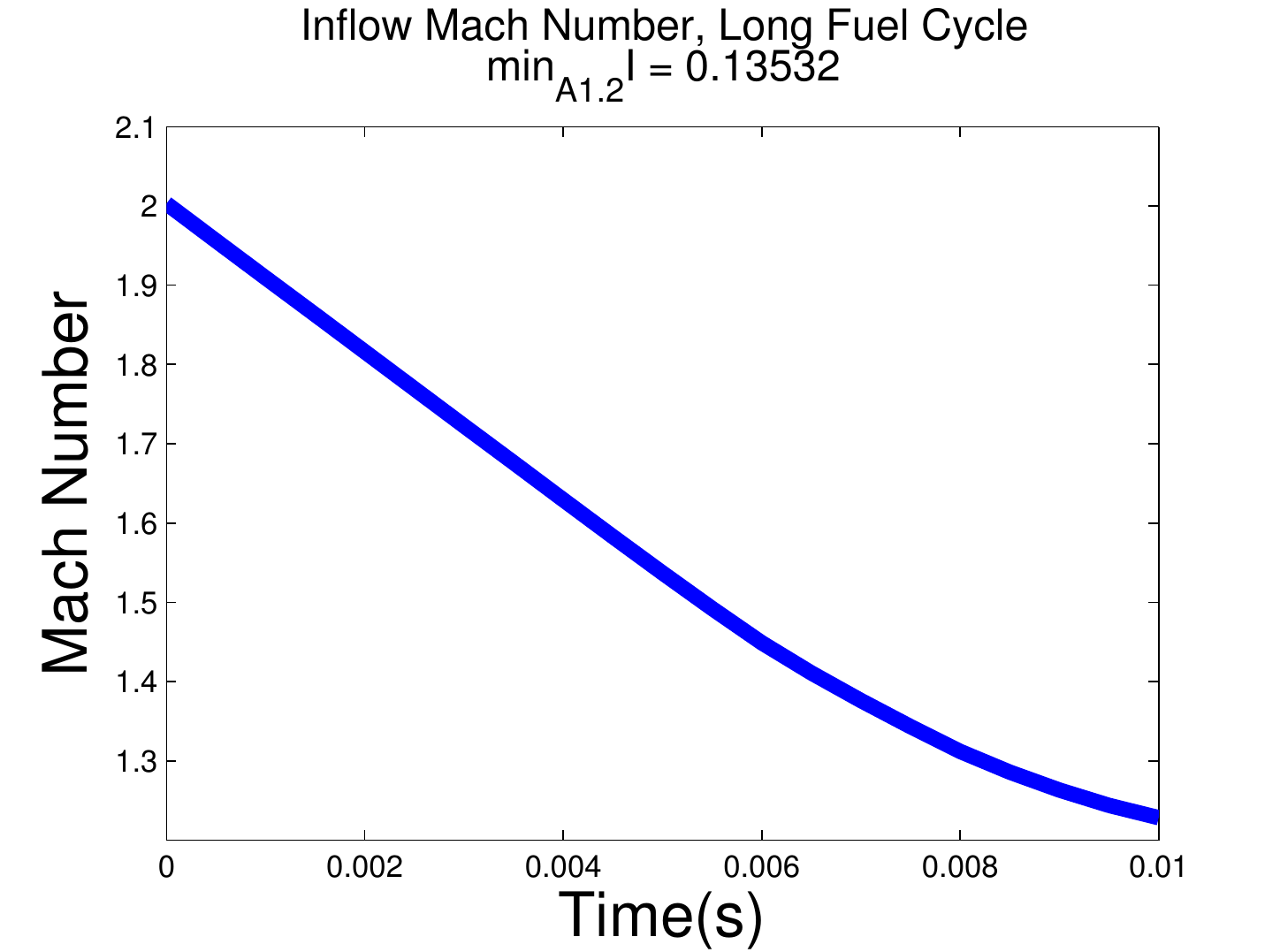}
	
	\includegraphics[width=0.49\textwidth]{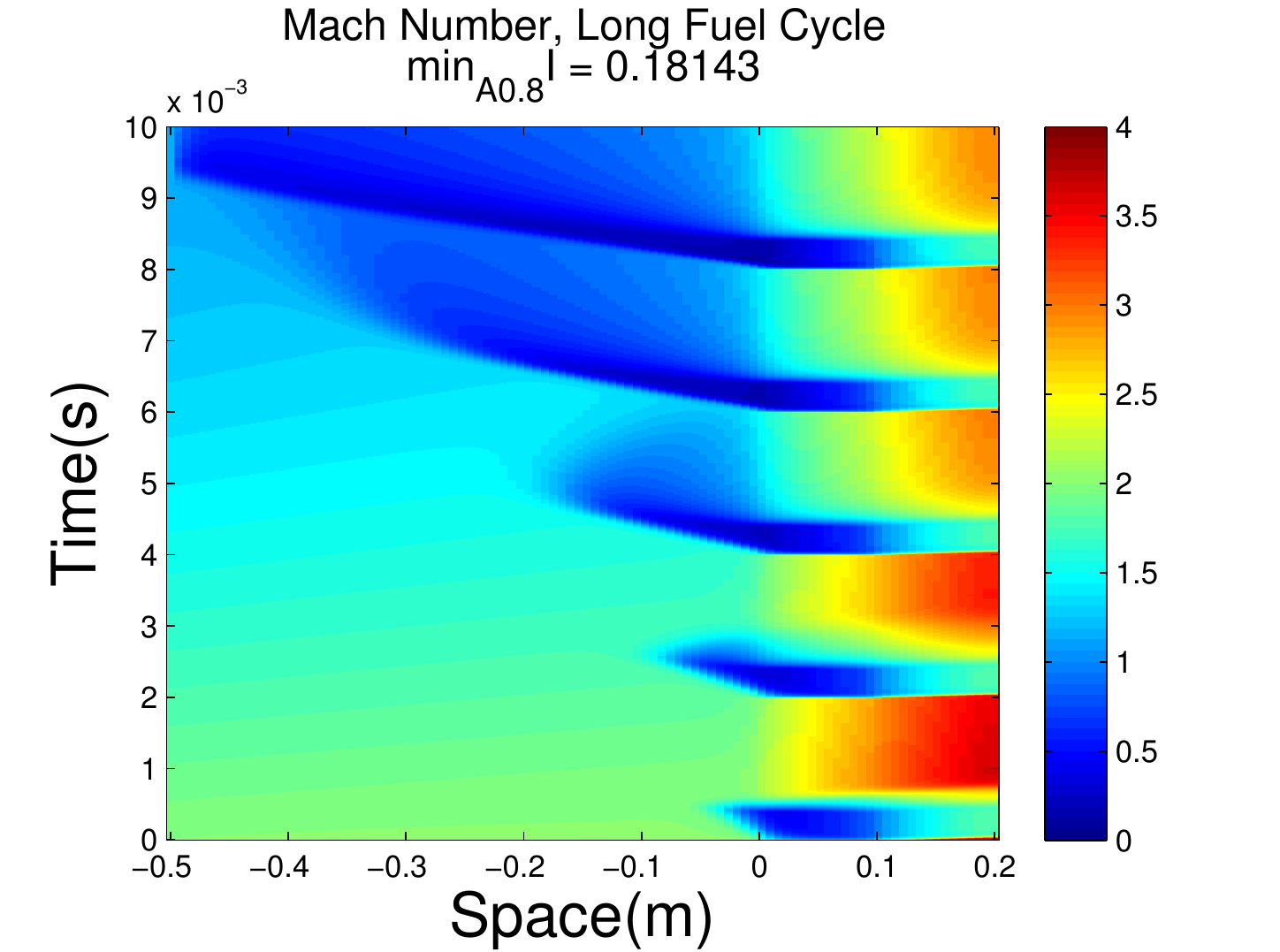}
	\includegraphics[width=0.49\textwidth]{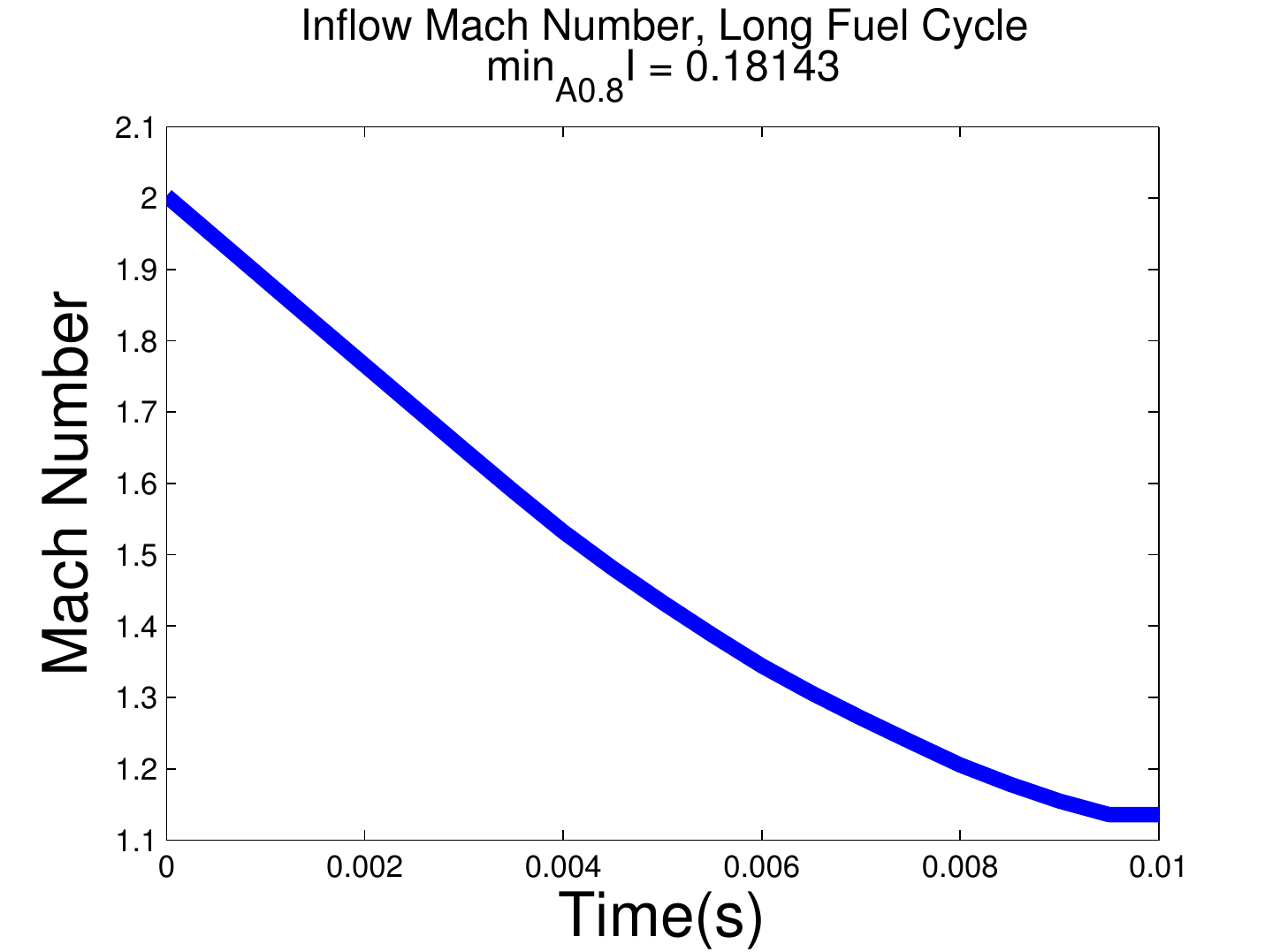}

	\caption{
	\label{fig:change of the constraint with the long fuel cycle}
	The results of the large deviation problem by changing the constraint set $\bfA_{1.0}$ to $\bfA_{0.8}$ 
	and $\bfA_{1.2}$ with the long fuel cycle.}
\end{figure}

\begin{table}
	\centering
	\begin{tabular}{|c|c|c|c|}
		\hline 
		& Short Fuel Cycle & Long Fuel Cycle & $\calI(u_{in}^*)$ \tabularnewline
		\hline
		$\inf_{\tilde{u}_{in}\in\bfA_{0.8}}I(\tilde{u}_{in})$ 
		& $0.26547$ & $0.18143$ & $0.3042$ \tabularnewline
		\hline
		$\inf_{\tilde{u}_{in}\in\bfA_{1.0}}I(\tilde{u}_{in})$ 
		& $0.21504$ & $0.15603$ & $0.21125$ \tabularnewline
		\hline
		$\inf_{\tilde{u}_{in}\in\bfA_{1.2}}I(\tilde{u}_{in})$ 
		& $0.13667$ & $0.13532$ & $0.1352$ \tabularnewline
		\hline 
	\end{tabular}
	\caption{
	\label{tab:the optimal rate functions over different A}
	The optimal rate functions over $\bfA_{0.8}$, $\bfA_{1.0}$ and $\bfA_{1.2}$. The values decrease with 
	the looser constraints.}
\end{table}

From Figure \ref{fig:change of the constraint with the short fuel cycle}, with the short fuel cycle, 
when we consider the constraint set $\bfA_{1.2}$, the result is not too surprising: the most probable inflow 
Mach number is close to a straight line starting from Mach $2$ to Mach $1.2$, and the shock generated by the 
engine has no contribution. Indeed, the set $\bfA_{1.2}$ can be viewed as the case that the engine operates 
in a more normal condition, and in this case the effect of the engine is less than the effect in the case of 
$\bfA_{1.0}$ so the inflow Mach number is still the dominating effect. Nevertheless, when $\bfA_{0.8}$ is 
considered, the qualitative behavior changes. The most probable situation to have the Mach $0.8$ subsonic 
flow is because the shock generated by the engine reaches the entrance, and the most probable inflow Mach 
number is not a straight line. In fact, the end point of the most probable inflow Mach number is higher than 
$0.8$, which means that to in the sense of the large deviations, the scramjet does not need a inflow with 
Mach $0.8$ to trigger the event of $\bfA_{0.8}$.

Then we consider the case of the long fuel cycle. From Figure 
\ref{fig:change of the constraint with the short fuel cycle}, we find that for the case of $\bfA_{1.2}$, 
the most probable inflow Mach number (upper right) is close to that of the short fuel cycle case (upper 
right in Figure \ref{fig:change of the constraint with the short fuel cycle}), and their optimal values 
of the rate function are also close to the continuum upper bound (see Table 
\ref{tab:the optimal rate functions over different A}). That means the shock generated by the engine is not 
the dominating effect in this case. For the case of $\bfA_{0.8}$, we can see that to have a subsonic flow of 
Mach $0.8$ at the entrance one just needs a supersonic inflow.

From Table \ref{tab:the optimal rate functions over different A}, we can find that the optimal values of the 
rate function with the short fuel cycle are consistently higher than those with the long fuel cycle.
This tells us that the strategy of the short fuel cycle is a more robust profile for the safety of the
operation of the scramjet.

\subsection{Impact of Engine Geometry}
\label{sec:Engine Geometry}

As Iaccarino et al. indicates in \cite{Iaccarino2011}, the engine geometry greatly influences the safe 
operating region against the unstart. In theory, for a larger angle of combustor $\theta_C$, the engine can 
accommodate more heat and therefore the scramjet has a larger safe operation region. However, 
\cite{Iaccarino2011} also mentions that a excessively large $\theta_C$ will leads to the flow separation and 
cause the loss of thrust. The quasi-1D model cannot capture this phenomenon and therefore it can not be seen 
as well in our results. Readers may refer to \cite{Iaccarino2011} to see more details of the impact of 
$\theta_C$.

We also find the similar qualitative results in the large deviation sense. We change $\theta_C$ from 
$7.5^\circ$ (the typical setting in \cite{Iaccarino2011}) to two extreme values in \cite{Iaccarino2011}, 
$\theta_C=2.5^\circ$ and $\theta_C=12^\circ$. In \cite{Iaccarino2011}, the safe operation region of the 
scramjet is very small when $\theta_C=2.5^\circ$, and $\theta_C=12^\circ$ is considered as the largest angle 
of combustor without causing flow separation.

From Figure \ref{fig:the LD solution of low theta_C}, we see that when $\theta_C=2.5^\circ$, the scramjet is 
not able to contain as much heat as before. Then the shock is build up and reaches the entrance. The optimal 
values of the rate function are very low so the unstart can happen very easily when $\theta_C=2.5^\circ$. On 
the other hand from Figure \ref{fig:the LD solution of high theta_C}, when $\theta_C=12^\circ$, the optimal 
values of the rate function of both fueling profiles are close to the continuum upper bound and the 
minimizer are similar to $u_{in}^*$. This is because for a larger $\theta_C$, the engine has more space for 
the generated heat and the shock is not easy to move upstream.

\begin{figure}
	\centering
	\includegraphics[width=0.49\textwidth]{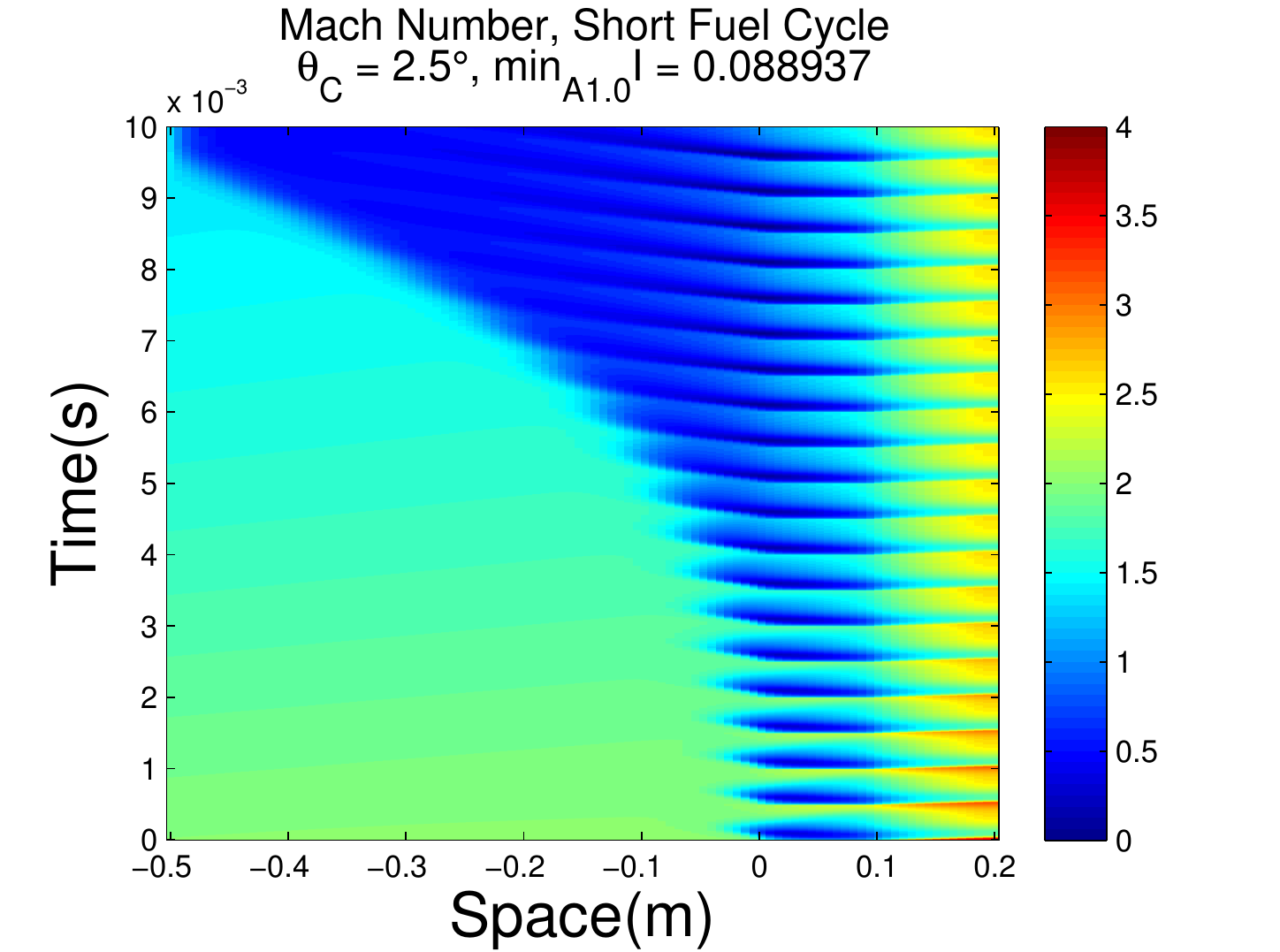}
	\includegraphics[width=0.49\textwidth]{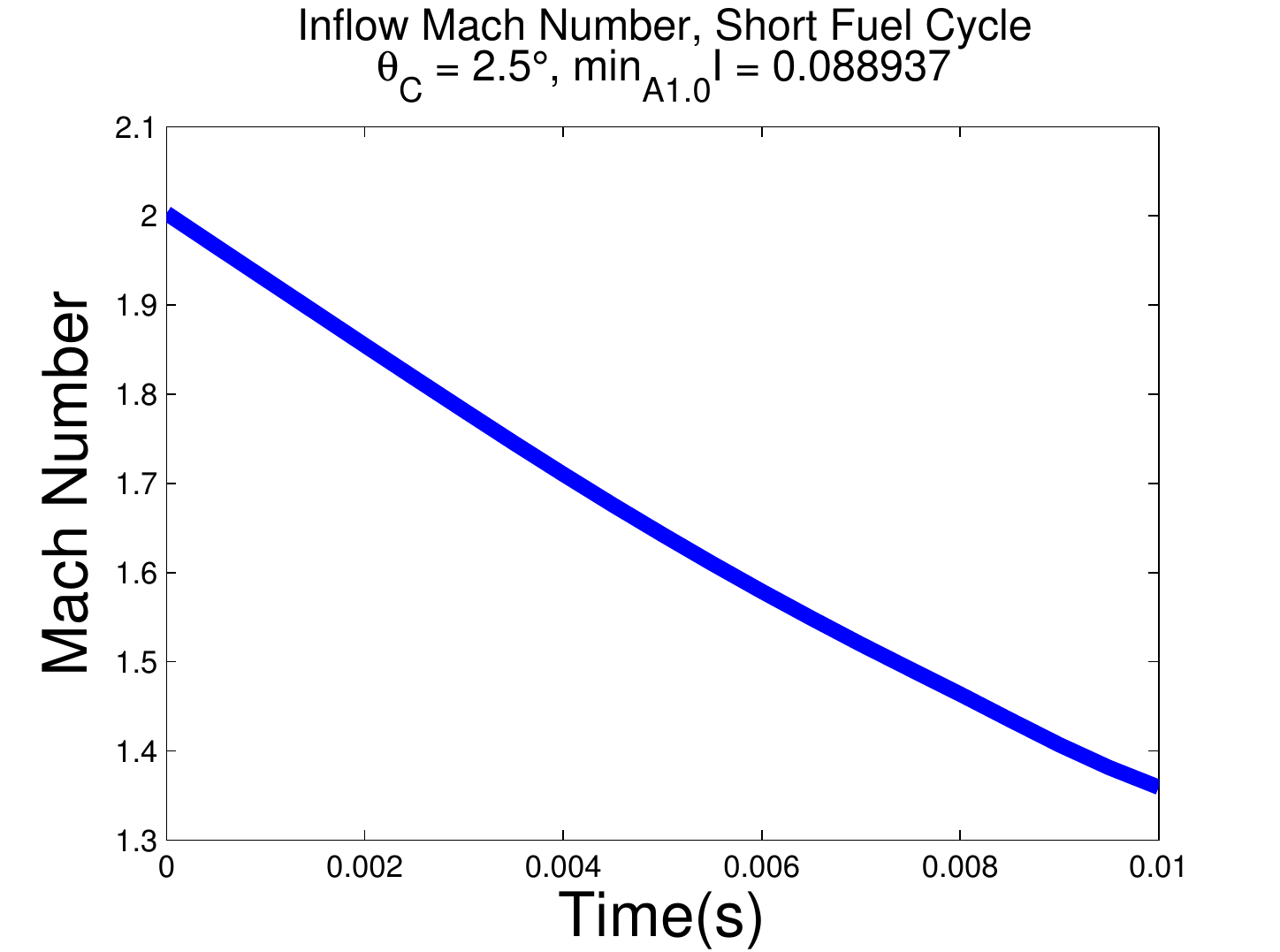}

	\includegraphics[width=0.49\textwidth]{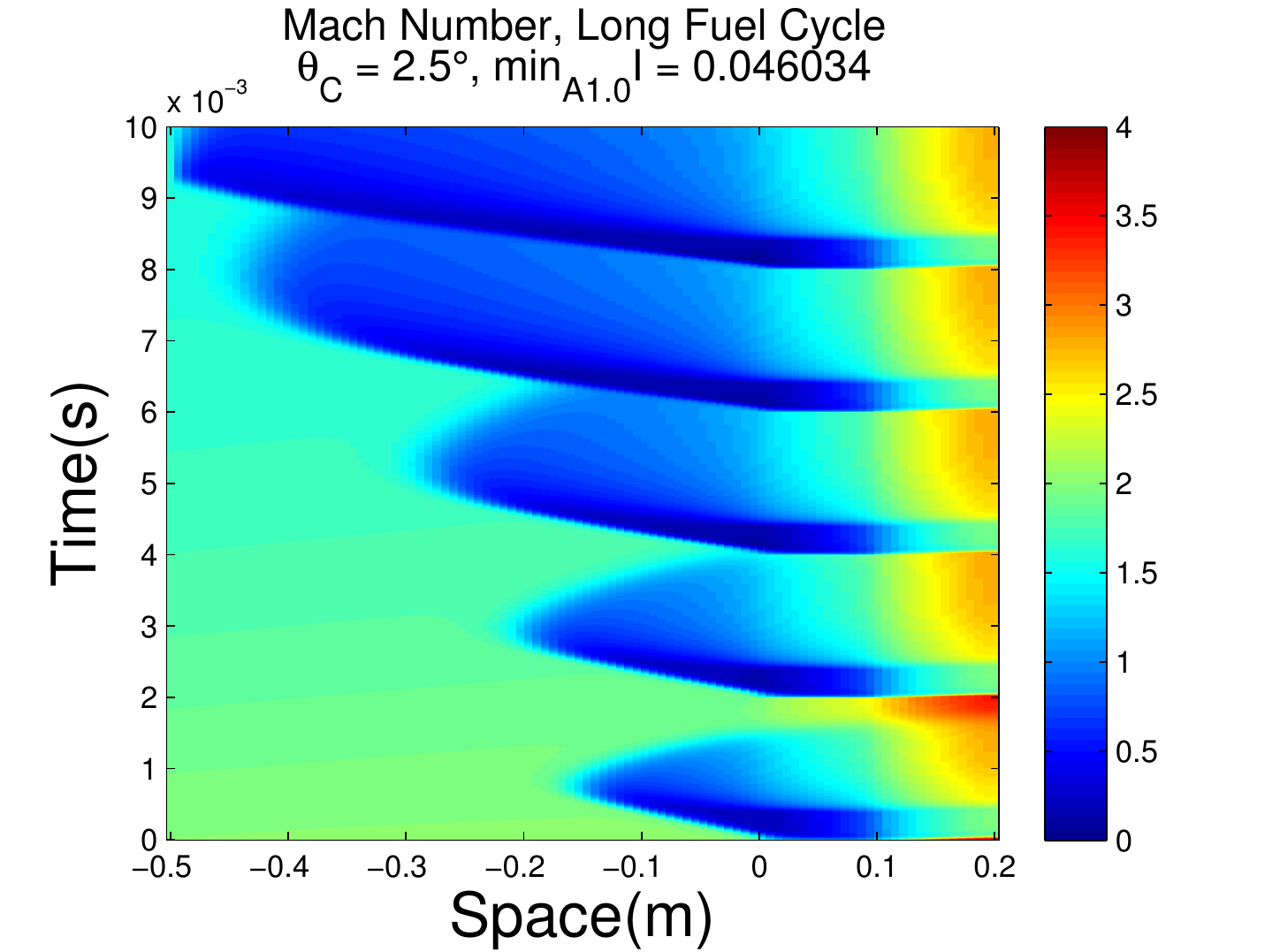}
	\includegraphics[width=0.49\textwidth]{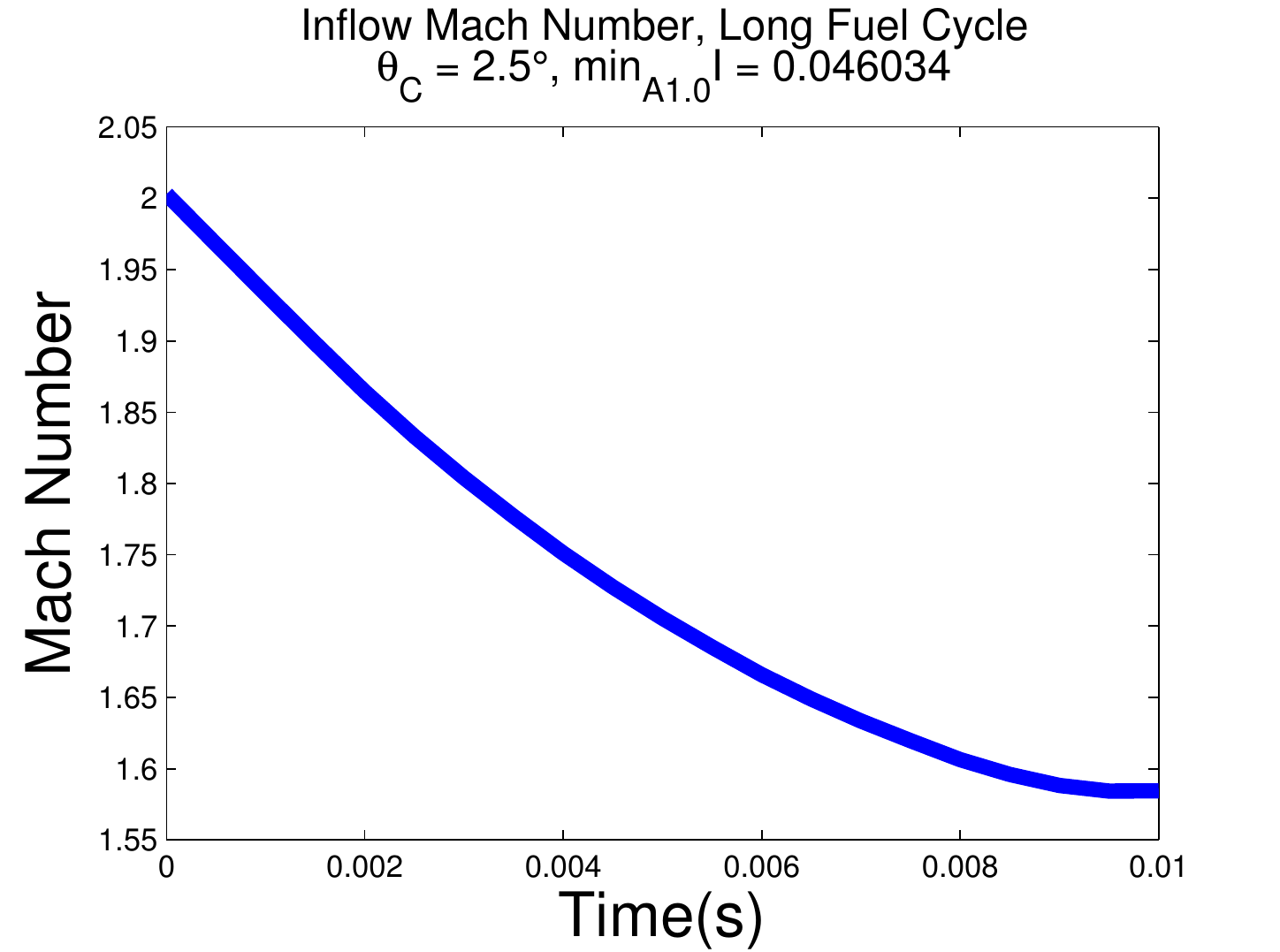}
	\caption{
	\label{fig:the LD solution of low theta_C}
	The large deviation solutions by changing $\theta_E$ from $7.5^\circ$ to $2.5^\circ$. The optimal values 
	of the rate function are significantly decreased.}
\end{figure}

\begin{figure}
	\centering
	\includegraphics[width=0.49\textwidth]{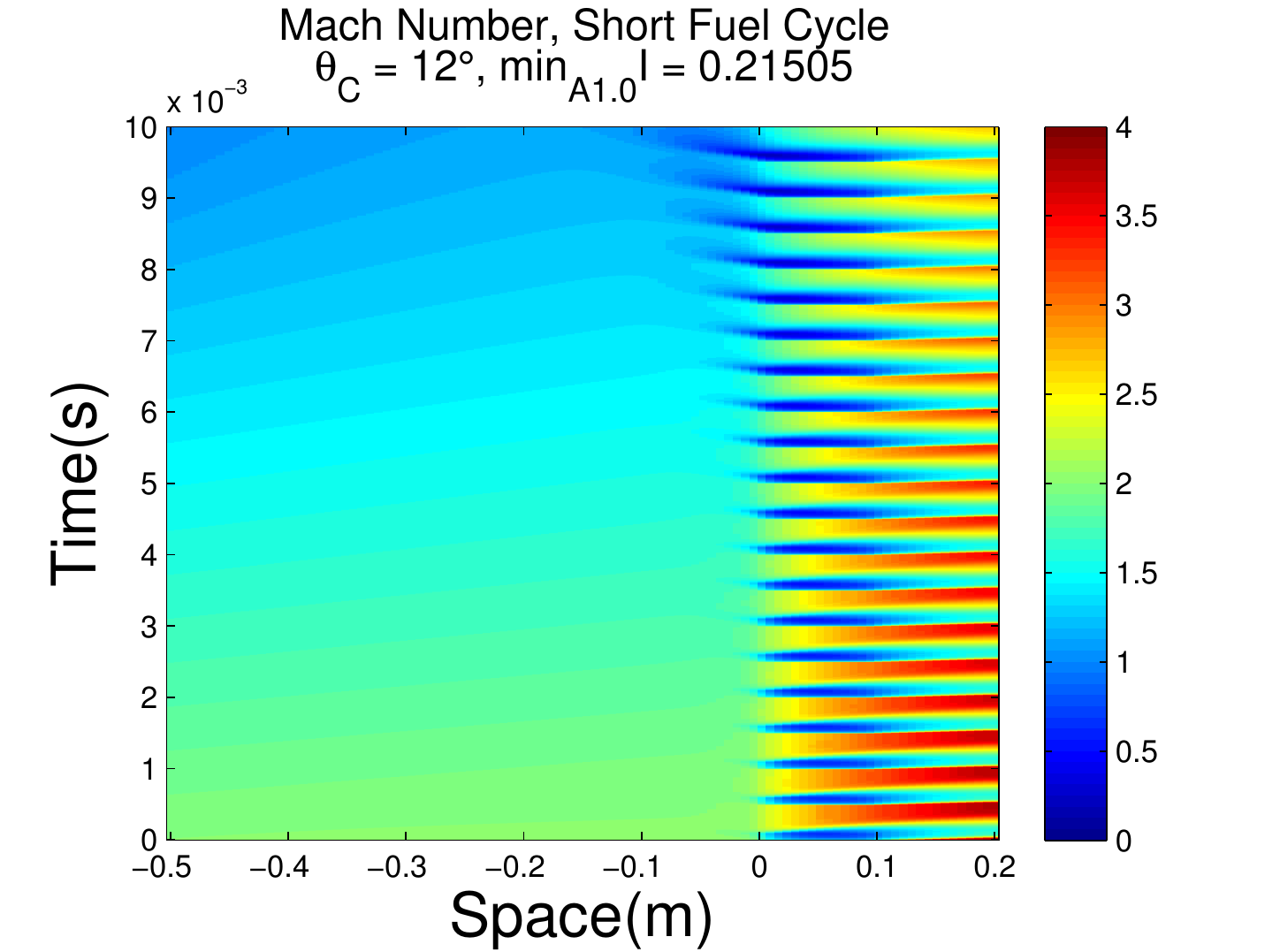}
	\includegraphics[width=0.49\textwidth]{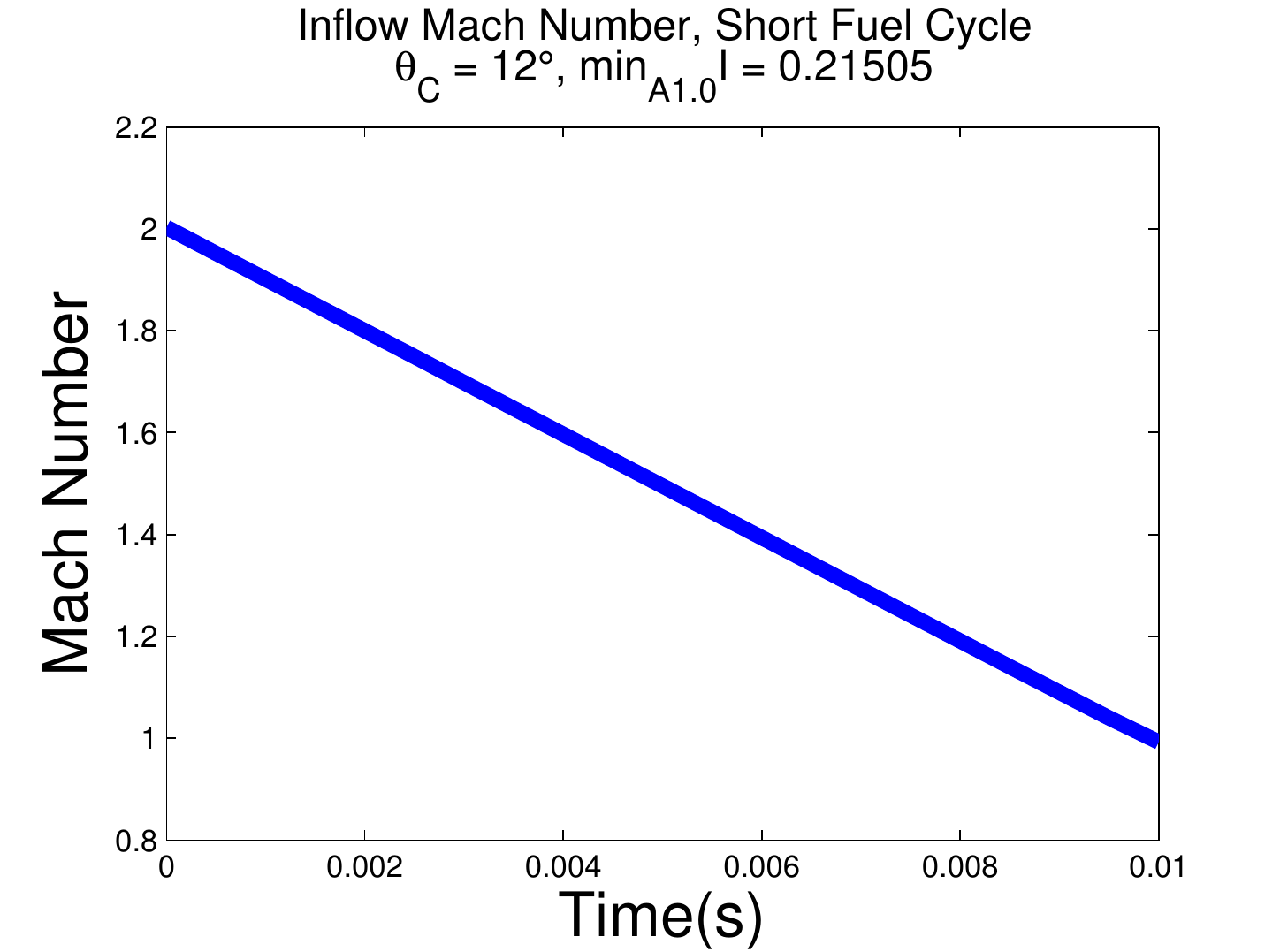}

	\includegraphics[width=0.49\textwidth]{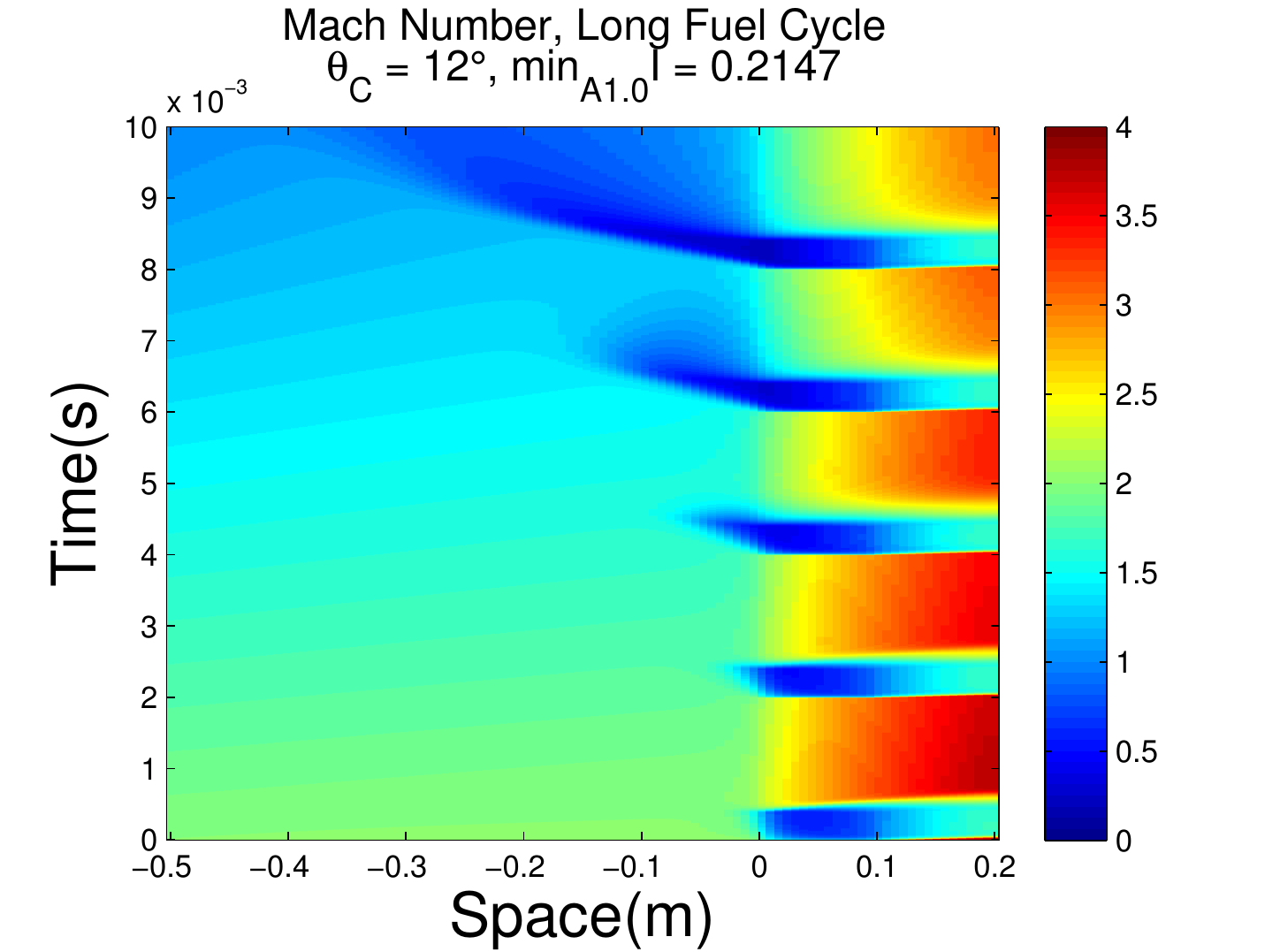}
	\includegraphics[width=0.49\textwidth]{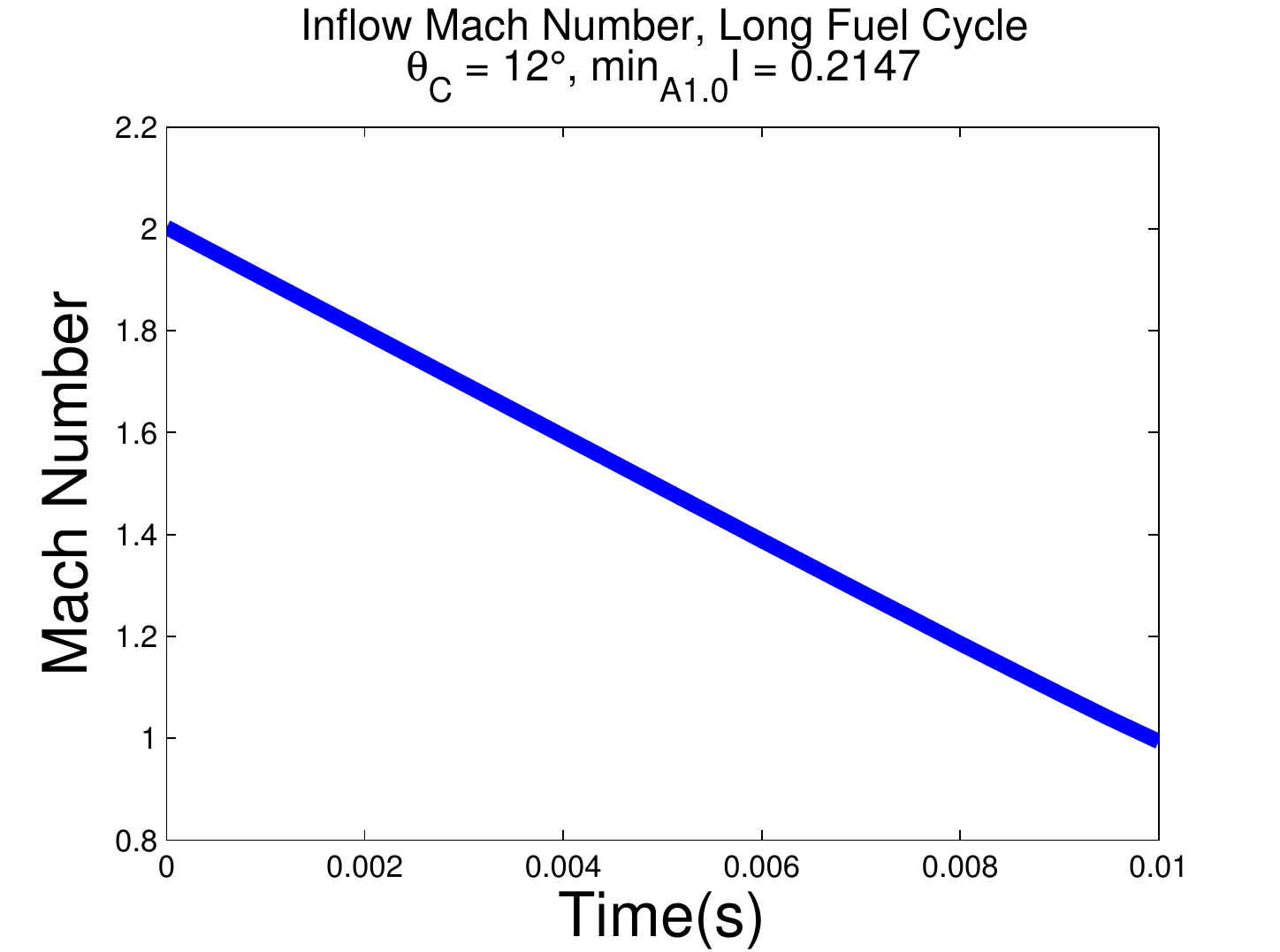}
	\caption{
	\label{fig:the LD solution of high theta_C}
	The large deviation solutions by changing $\theta_E$ from $7.5^\circ$ to $12^\circ$. The optimal value 
	of the high fuel cycle case is significantly increased. The optimal value of the short fuel cycle 
	case is almost the same because the value already reaches the upper limit.}
\end{figure}

\begin{table}
	\centering
	\begin{tabular}{|c|c|c|c|}
		\hline 
		$\inf_{\tilde{u}_{in}\in\bfA_{1.0}}I(\tilde{u}_{in})$ 
		& Short Fuel Cycle & Long Fuel Cycle & $\calI(u_{in}^*)$ \tabularnewline
		\hline 
		$\theta_C = 2.5^\circ$ & $0.088937$ & $0.046034$ & $0.21125$ \tabularnewline
		\hline 
		$\theta_C = 7.5^\circ$ & $0.21504$  & $0.15603$ & $0.21125$ \tabularnewline
		\hline 
		$\theta_C = 12^\circ$  & $0.21505$  & $0.2147$ & $0.21125$ \tabularnewline
		\hline 
	\end{tabular}
	\caption{
	\label{tab:the optimal rate function of diffetent theta_C}
	The optimal values of the rate function of different $\theta_C$. The higher $\theta_C$ have the higher 
	optimal values}
\end{table}

\subsection{Resolutions of Large Deviation Solutions}
\label{sec:resolution}

From the previous simulations, we are actually convinced that $\tilde{N}=20$ is the sufficient resolution of 
the large deviation solution due to the smoothness and the strong linearity of the solutions. Here we 
support our claim that $\tilde{N}=20$ is enough by doubling $\tilde{N}$ to $40$. If the results are very 
close, we can strongly believe our argument.

\begin{figure}
	\centering
	\includegraphics[width=0.49\textwidth]{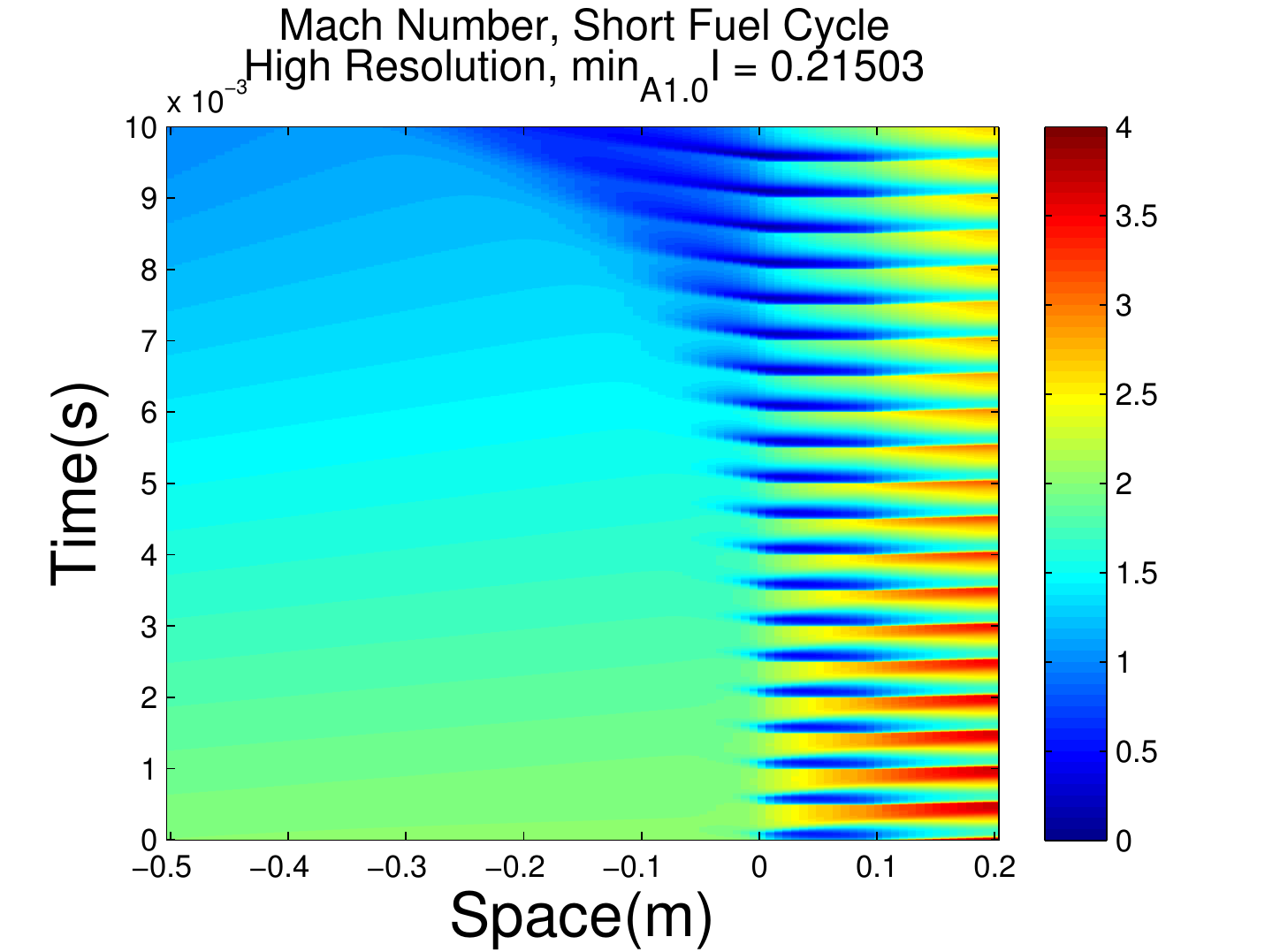}
	\includegraphics[width=0.49\textwidth]{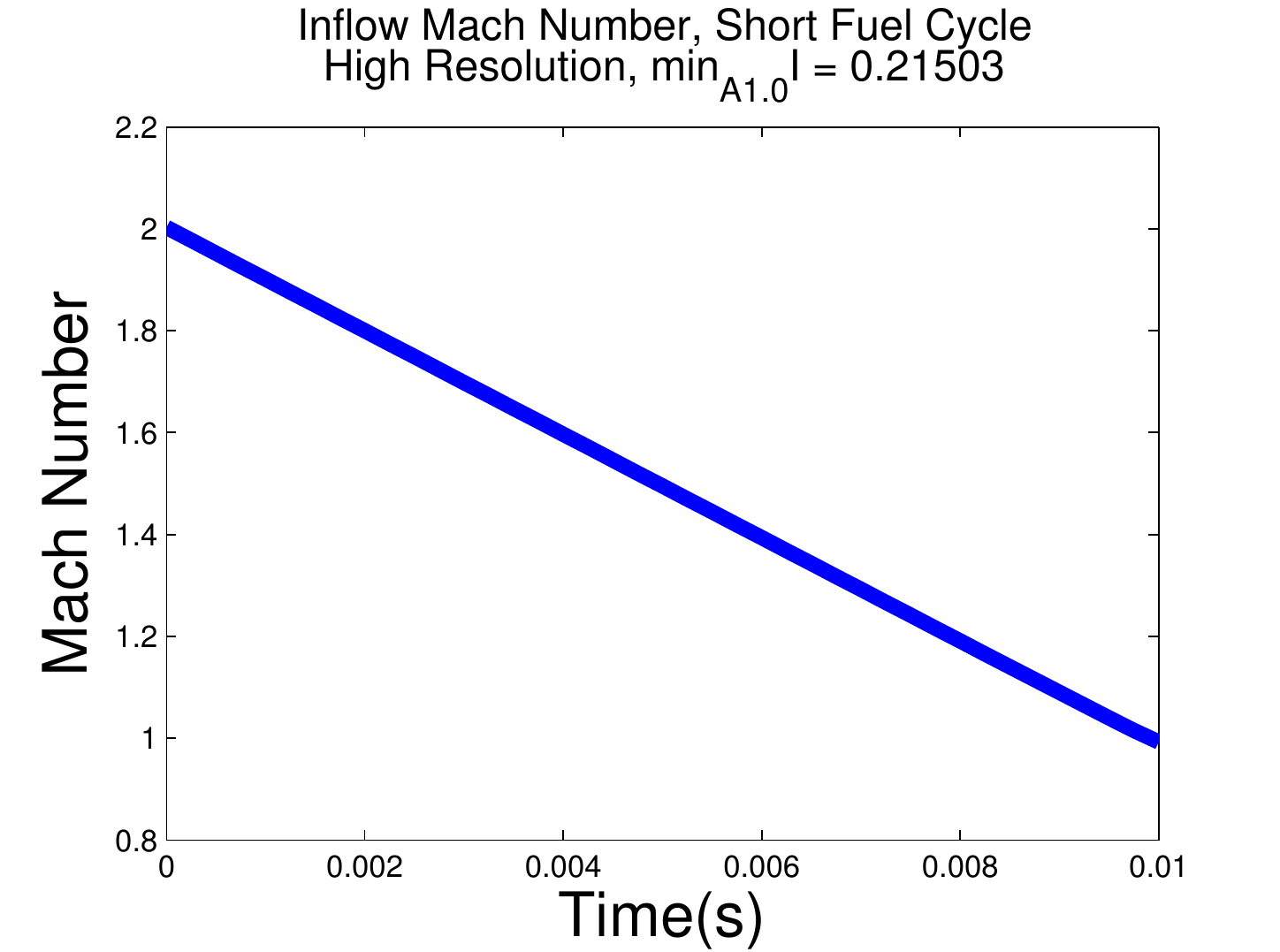}
	
	\includegraphics[width=0.49\textwidth]{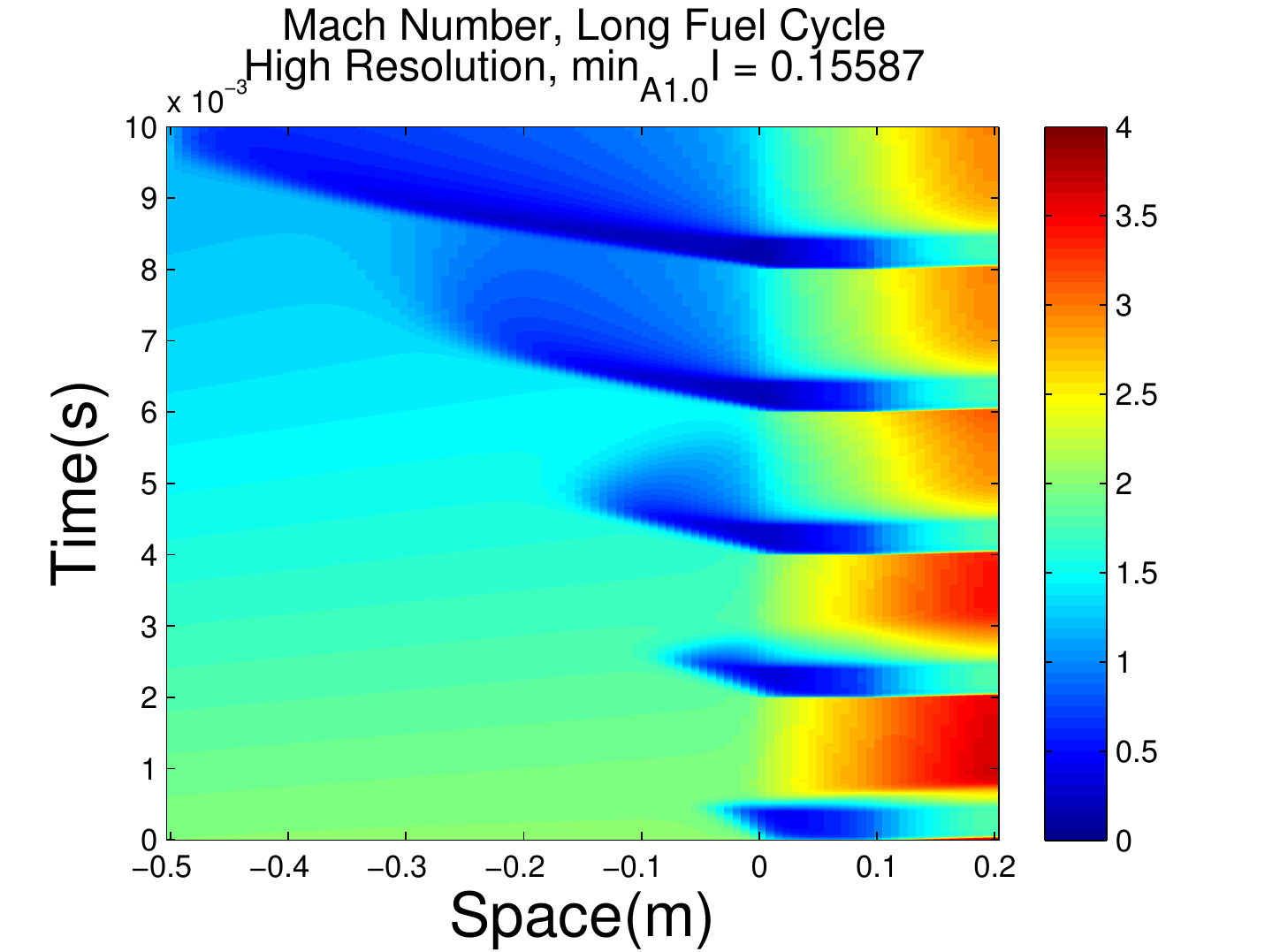}
	\includegraphics[width=0.49\textwidth]{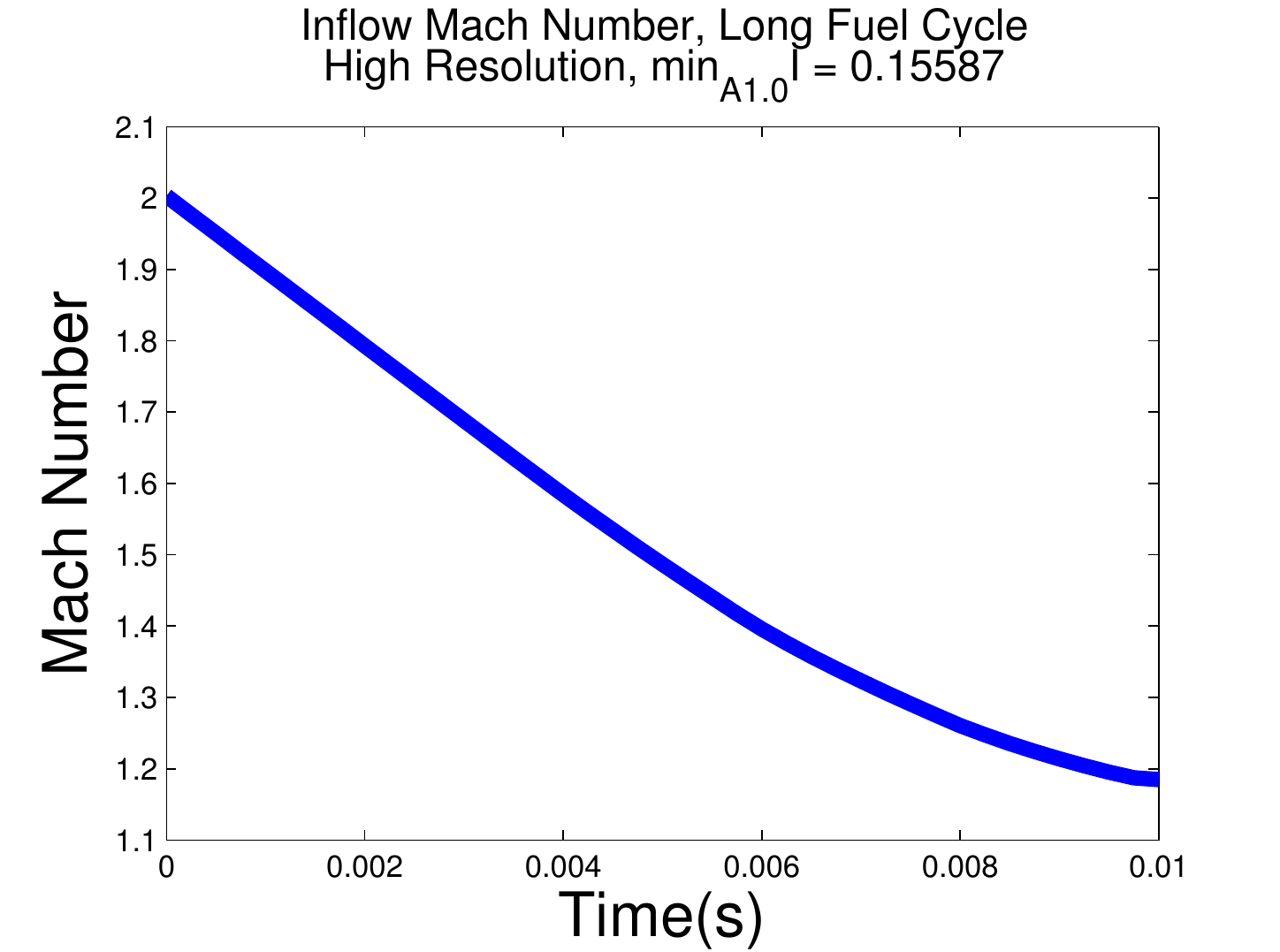}
	
	\caption{
	\label{fig:the LD solutions with the high resolution}
	The large deviation solutions with the high resolution. The settings are the same as those in Figure 
	\ref{fig:impact of fueling} but the solution resolution $\tilde{N}$ is doubled to $40$. The results are 
	nearly the same and the relative differences between the optimal values of the rate function are less 
	than $1\%$.}
\end{figure}

We use the same settings in Section \ref{sec:impact of fueling}, but let $\tilde{N}=40$ instead. From Figure 
\ref{fig:the LD solutions with the high resolution}, the optimal solutions are essentially the same, and the 
relative differences between the optimal values of the rate function are less than $1\%$.

\begin{table}
	\centering
	\begin{tabular}{|c|c|c|c|}
		\hline 
		$\inf_{\tilde{u}_{in}\in\bfA_{1.0}}I(\tilde{u}_{in})$ 
		& Short Fuel Cycle & Long Fuel Cycle & $\calI(u_{in}^*)$ \tabularnewline
		\hline 
		$\tilde{N}=20$ & $0.21504$ & $0.15603$ & $0.21125$ \tabularnewline
		\hline 
		$\tilde{N}=40$ & $0.21503$ & $0.15587$ & $0.21125$ \tabularnewline
		\hline 
	\end{tabular}
	\caption{
	\label{tab:the rate functions of different resolutions}
	The optimal values of the rate function for different resolutions. The relative differences are less 	
	than $1\%$.}
\end{table}
\section{Monte Carlo Simulation with Importance Sampling}
\label{sec:importance sampling}

The large deviation results obtained in Section \ref{sec:numerical result of LD} is the exponential rate of 
decay of the probability, but are not the actual one. In this section we compute the probability of the 
unstart by using the Monte Carlo method.

\subsection{Introduction to Importance Sampling}

The most basic way to compute the probability of the unstart $\PP(\bfA)$ is to generate many independent 
sample paths $\{\tilde{u}^j_{in}\}_{j=1}^J$, where for each $j$, $\{\tilde{u}^j_{in}(n)\}_{n=0}^N$ satisfy 
(\ref{eq:reduced order u_in}) and (\ref{eq:linear interpolation of inflow speed}). Then use the numerical 
PDE in Section \ref{sec:numerical PDE} to determine if $\tilde{u}_{in}^j\in\bfA$. The basic Monte Carlo 
estimator is
\begin{equation}
	\label{eq:basic Monte Carlo estimator}
	\hat{P}^{MC} = \frac{1}{J}\sum_{j=1}^J 1_\bfA(\tilde{u}_{in}^j).
\end{equation}
Because $\EE[\hat{P}^{MC}]=\PP(\bfA)$, $\hat{P}^{MC}$ is an unbiased estimator, which means that 
$\hat{P}^{MC}\to\PP(\bfA)$ almost surely as $J\to\infty$ by the law of large numbers. In addition, by the 
central limit theorem, the error bar of $\hat{P}^{MC}$ is proportional to its standard deviation
\[
	\Std(\hat{P}^{MC}) = \frac{1}{\sqrt{J}}[\PP(\bfA)-\PP^2(\bfA)]^{1/2}.
\]
In order to have a meaningful estimate, the order of the error bar should not exceed the order of the 
estimated probability. Namely,
\[
	\frac{\Std(\hat{P}^{MC})}{\PP(\bfA)} 
	= \frac{1}{\sqrt{J}}\left(\frac{1}{\PP(\bfA)}-1\right)^{1/2}
	= \mathcal{O}(1).
\]
The large deviation analysis tells us that as $\eps\ll 1$, $\PP(\bfA)$ decreases exponentially in $\eps$ so 
at the same time $J$ has to increase exponentially. The exponential growth of $J$ will eventually cause the 
basic Monte Carlo method computationally impossible.

To see what causes this computational difficulty, we note that the basic Monte Carlo estimator is simply the 
empirical frequency of $\tilde{u}_{in}\in\bfA$. When $\eps$ is small, only a very small fraction of the 
samples is meaningful (in $\bfA$), and most of the samples has no contribution to the estimation so 
resulting the inaccuracy of the estimator.

The well-established method to solve this issue is to use the importance sampling technique. The idea is 
that since most of the samples under the original measure $\PP$ have no contribution, we use a different 
measure $\QQ$ to sample $\tilde{u}_{in}^j$ so that there is a significant fraction of the samples 
contributing to the estimate. Since we bias the measure $\PP$, a correction is needed to obtain an unbiased 
estimate. More precisely, we have
\[
	\PP(\tilde{u}_{in}\in\bfA) 
	= \EE_\PP[1_\bfA(\tilde{u}_{in})] 
	= \EE_\QQ[1_\bfA(\tilde{u}_{in})\frac{d\PP}{d\QQ}(\tilde{u}_{in})],
\]
where $d\PP/d\QQ$ is the change of the measure. The importance sampling estimator is 
\[
	\hat{P}^{IS} = \frac{1}{J}\sum_{j=1}^J 1_\bfA(\tilde{u}_{in}^j)\frac{d\PP}{d\QQ}(\tilde{u}_{in}^j),
\]
where $\tilde{u}_{in}^j$ is sampled under $\QQ$. Note that $\hat{P}^{IS}$ is unbiased as 
$\EE[\hat{P}^{IS}]=\PP(\tilde{u}_{in}\in\bfA)$ and its standard deviation is 
\[
	\Std(\hat{P}^{MC}) = \frac{1}{\sqrt{J}}\Std(1_\bfA(\tilde{u}_{in})\frac{d\PP}{d\QQ}(\tilde{u}_{in})).
\]

\subsection{Large-Deviation-Based Importance Sampling}

The main challenge of the importance sampling is how to choose $\QQ$ (and therefore $d\PP/d\QQ$) to lower 
the standard deviation. A good choice may come from the solution of the large deviation problem 
$\inf_{\tilde{u}_{in}\in\bfA}I(\tilde{u}_{in})$. Assuming that the minimizer
\[
	\tilde{u}_{in}^{**}=\arg\inf_{\tilde{u}_{in}\in\bfA}I(\tilde{u}_{in}),
\]
is unique, then for any open neighborhood $\bfN(\tilde{u}_{in}^{**})$ of $\tilde{u}_{in}^{**}$, we have the 
following asymptotic conditional probability:
\begin{multline*}
	\PP(\tilde{u}_{in}\in\bfN(\tilde{u}_{in}^{**})|\tilde{u}_{in}\in\bfA)
	= 1- \PP(\tilde{u}_{in}\in\bfN^C(\tilde{u}_{in}^{**})|\tilde{u}_{in}\in\bfA)\\
	= 1 - {\PP(\bfN^C(\tilde{u}_{in}^{**})\cap\bfA)}/{\PP(\bfA)}
	\overset{\eps\ll 1}{\approx} 1 - 
	\frac{\exp(-\frac{1}{\eps^2}\inf_{\tilde{u}_{in}\in\bfN^C(\tilde{u}_{in}^{**})\cap\bfA}
	I(\tilde{u}_{in}))}
	{\exp(-\frac{1}{\eps^2}\inf_{\tilde{u}_{in}\in\bfA}I(\tilde{u}_{in}))} \overset{\eps\to 0}{\to} 1.
\end{multline*}
In other words, the mass of the conditional probability is concentrated exponentially fast around the most 
probable path $\tilde{u}_{in}^{**}$. This observation motivates us to choose the measure $\QQ$ so that the 
sampled $\tilde{u}_{in}^j$'s are centered around $\tilde{u}_{in}^{**}$.

Now we construct $\QQ$ and $d\PP/d\QQ$ based on $\tilde{u}_{in}^{**}$. Recall that under $\PP$, from 
(\ref{eq:reduced order u_in}) we have
\[
	\tilde{u}_{in}((n+1)m) = \tilde{u}_{in}(nm) + \eps \sigma_u \Delta \tilde{W}_{n+1}
	= u_0 + \eps \sigma_u \sum_{l=0}^n \Delta \tilde{W}_{l+1},\quad 
	n=0,\ldots,\tilde{N}-1,
\]
We let $\QQ$ such that under $\QQ$, $\tilde{u}_{in}$ is a Gaussian random walk centered at $ 
\tilde{u}_{in}^{**}$:
\begin{equation}
	\label{eq:reduced order u_in under Q}
	\tilde{u}_{in}((n+1)m) 
	= \tilde{u}_{in}^{**}((n+1)m) 
	+ \eps \sigma_u \sum_{l=0}^n \Delta \hat{W}_{l+1},\quad 
	n=0,\ldots,\tilde{N}-1,
\end{equation}
where $\{\Delta\hat{W}_{n+1}\}_{n=0}^{\tilde{N}-1}$ are independent Gaussian random variables with mean 
zero and variance $m\Delta t$ under $\QQ$. The intermediate variables are still determined by the linear 
interpolation (\ref{eq:linear interpolation of inflow speed}). Note that 
$\{\tilde{u}_{in}(nm)\}_{n=1}^{\tilde{N}}$ are jointly Gaussian under both $\PP$ and $\QQ$ so the change of 
measure can be obtained explicitly:
\begin{equation}
	\label{eq:change of measure}
	\frac{d\PP}{d\QQ}(\tilde{u}_{in};\tilde{u}_{in}^{**})
	= \frac
	{\exp\left(-\frac{1}{\eps^2\sigma_u^2}(\vecu-\mathbf{u}_0)^{\mathbf{T}}
	\mathbf{\Sigma}^{-1}(\vecu-\mathbf{u}_0)\right)}
	{\exp\left(-\frac{1}{\eps^2\sigma_u^2}(\vecu-\vecuSS)^{\mathbf{T}}
	\mathbf{\Sigma}^{-1}(\vecu-\vecuSS)\right)},
\end{equation}
where $\mathbf{u}_0$, $\vecuSS$ and $\vecu$ are $\tilde{N}$-dimensional column vectors:
\[
	\mathbf{u}_0 = (u_0,\ldots,u_0),\quad 
	\vecuSS = (\tilde{u}_{in}^{**}(m),\ldots,\tilde{u}_{in}^{**}(\tilde{N}m)),\quad
	\vecu = (\tilde{u}_{in}(m),\ldots,\tilde{u}_{in}(\tilde{N}m)),
\]
and $\mathbf{\Sigma}$ is the covariance matrix of $(\Delta\hat{W}_1,\ldots,\Delta\hat{W}_{\tilde{N}})$ under 
$\QQ$.

In summary, the large-deviation-based importance sampling is implemented as follows:
\begin{algorithm}
	\begin{enumerate}
		\item Compute $\tilde{u}_{in}^{**}=\arg\inf_{\tilde{u}_{in}\in\bfA}I(\tilde{u}_{in})$ numerically by 
		Algorithm \ref{alg:numerical LDP for unstart}.
		
		\item Sample $J$ independent $\tilde{u}_{in}^j$ under $\QQ$ by (\ref{eq:reduced order u_in under Q}).
		
		\item For each $j$, compute the change of measure 
		$\frac{d\PP}{d\QQ}(\tilde{u}_{in}^j;\tilde{u}_{in}^{**})$ in (\ref{eq:change of measure}).
		
		\item The large-deviation-based importance sampling estimator is 
		\[
			\hat{P}^{IS} = \frac{1}{J}\sum_{j=1}^J 
			1_\bfA(\tilde{u}_{in}^j)\frac{d\PP}{d\QQ}(\tilde{u}_{in}^j;\tilde{u}_{in}^{**}).
		\]
	\end{enumerate}
\end{algorithm}


\subsection{Simulation Results}

Here we test the two estimators of the probability of the unstart: the basic Monte Carlo estimator 
$\hat{P}^{MC}$ and the large-deviation-based importance sampling estimator $\hat{P}^{IS}$. The event we test 
is $\PP(\tilde{u}_{in}\in\bfA)=\PP(\tilde{u}_{in}\in\bfA_{1.0})$ (see (\ref{eq:the set of unstart}) for the 
definition of $\bfA$) with the short and long fuel cycles (see Section \ref{sec:impact of fueling}). As the 
inflow condition is random, instead of the uniform time increment $\Delta t$, we use the adaptive time 
increment to ensure that the CFL condition is satisfied (see Appendix \ref{sec:component-wise LLF} for more 
details.)

We let $J=10^4$ and test for $\eps=0.2,0.22,0.24,\ldots,0.4$, where $\PP(\bfA)=\mathcal{O}(10^{-1})$ for 
$\eps=0.4$ and $\PP(\bfA)=\mathcal{O}(10^{-3})$ for $\eps=0.2$. The (numerical) $99\%$ confidence interval 
and the (numerical) relative error are two quantities measuring the performance of the estimators. We define 
the numerical standard deviations as 
\begin{align*}
	&(\Std^{MC}_J)^2 = \frac{1}{J-1}\sum_{j=1}^J (1_\bfA(\tilde{u}_{in}^j)-\hat{P}^{MC})^2,\\
	&(\Std^{IS}_J)^2 = \frac{1}{J-1}\sum_{j=1}^J 	
	(1_\bfA(\tilde{u}_{in}^j)\frac{d\PP}{d\QQ}(\tilde{u}_{in}^j;\tilde{u}_{in}^{**}) -\hat{P}^{IS})^2.
\end{align*}
Then the numerical $99\%$ confidence intervals are 
\[
	[\hat{P}^{MC}-\frac{2.58}{\sqrt{J}}\Std^{MC}_J, \hat{P}^{MC}+\frac{2.58}{\sqrt{J}}\Std^{MC}_J],\quad 
	[\hat{P}^{IS}-\frac{2.58}{\sqrt{J}}\Std^{IS}_J, \hat{P}^{IS}+\frac{2.58}{\sqrt{J}}\Std^{IS}_J],
\]
and the numerical relative errors are $\Std^{MC}_J/\hat{P}^{MC}$ and $\Std^{IS}_J/\hat{P}^{IS}$.

\begin{figure}
	\centering
	\includegraphics[width=0.49\textwidth]{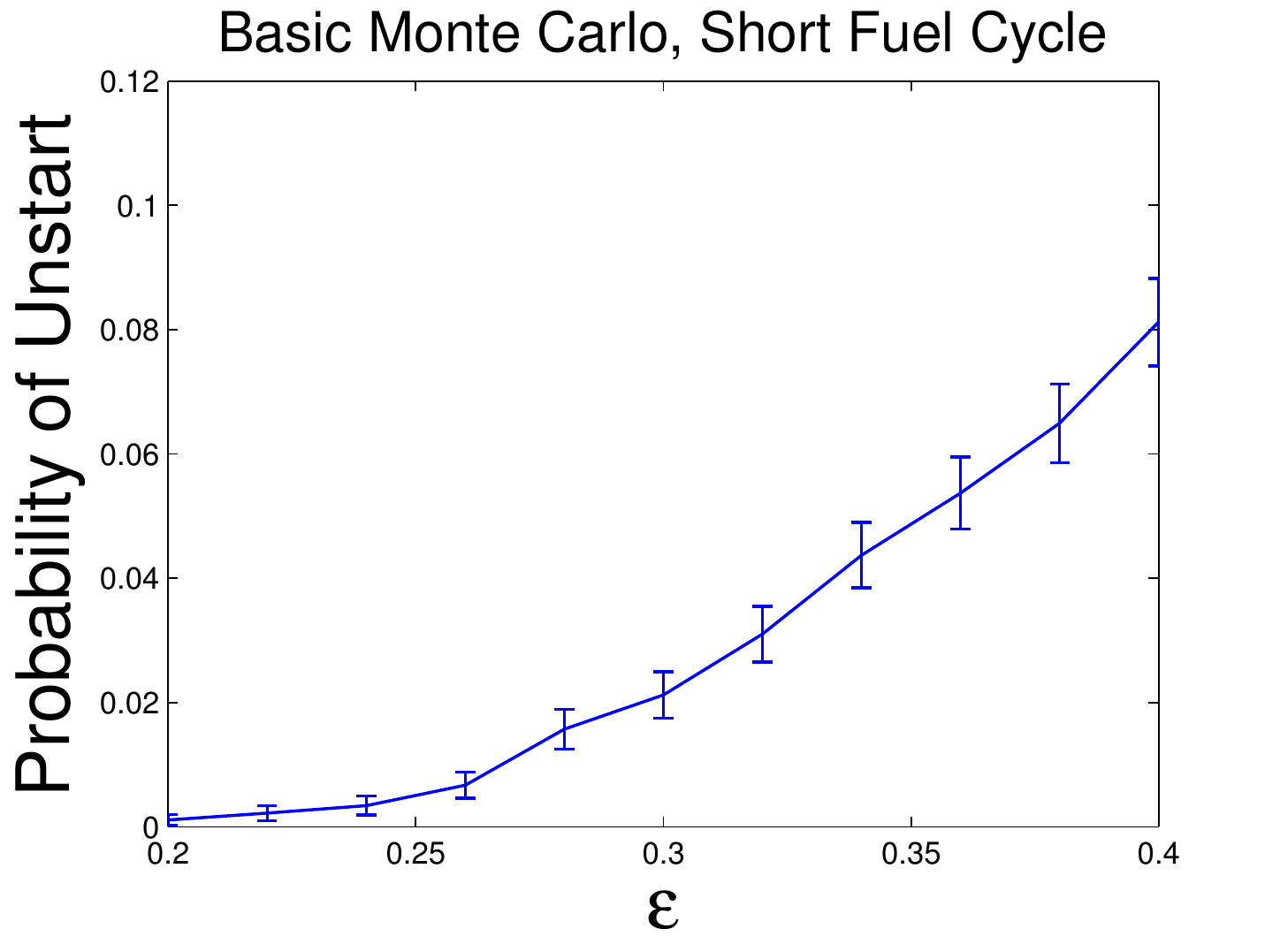}
	\includegraphics[width=0.49\textwidth]{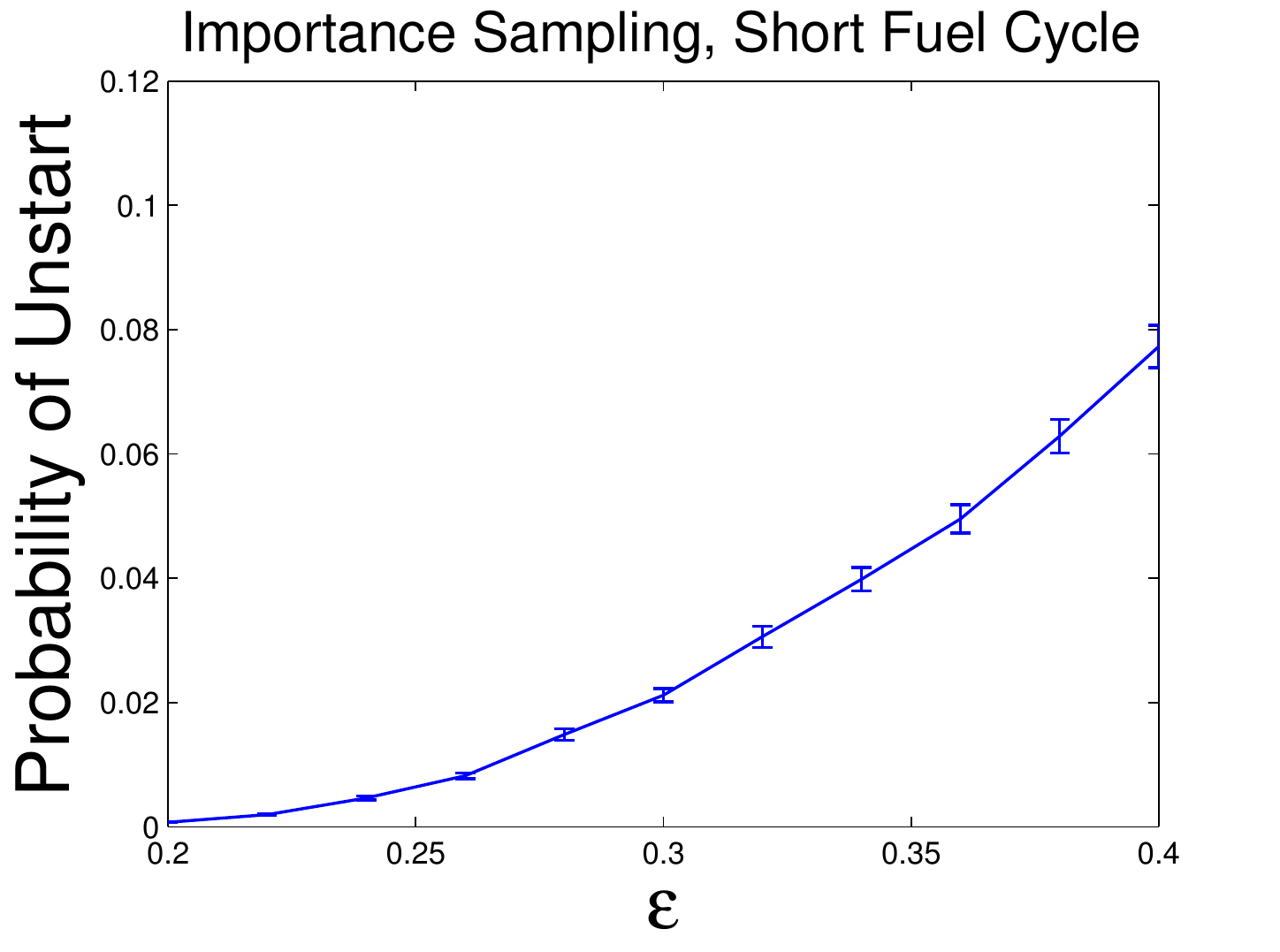}

	\includegraphics[width=0.49\textwidth]{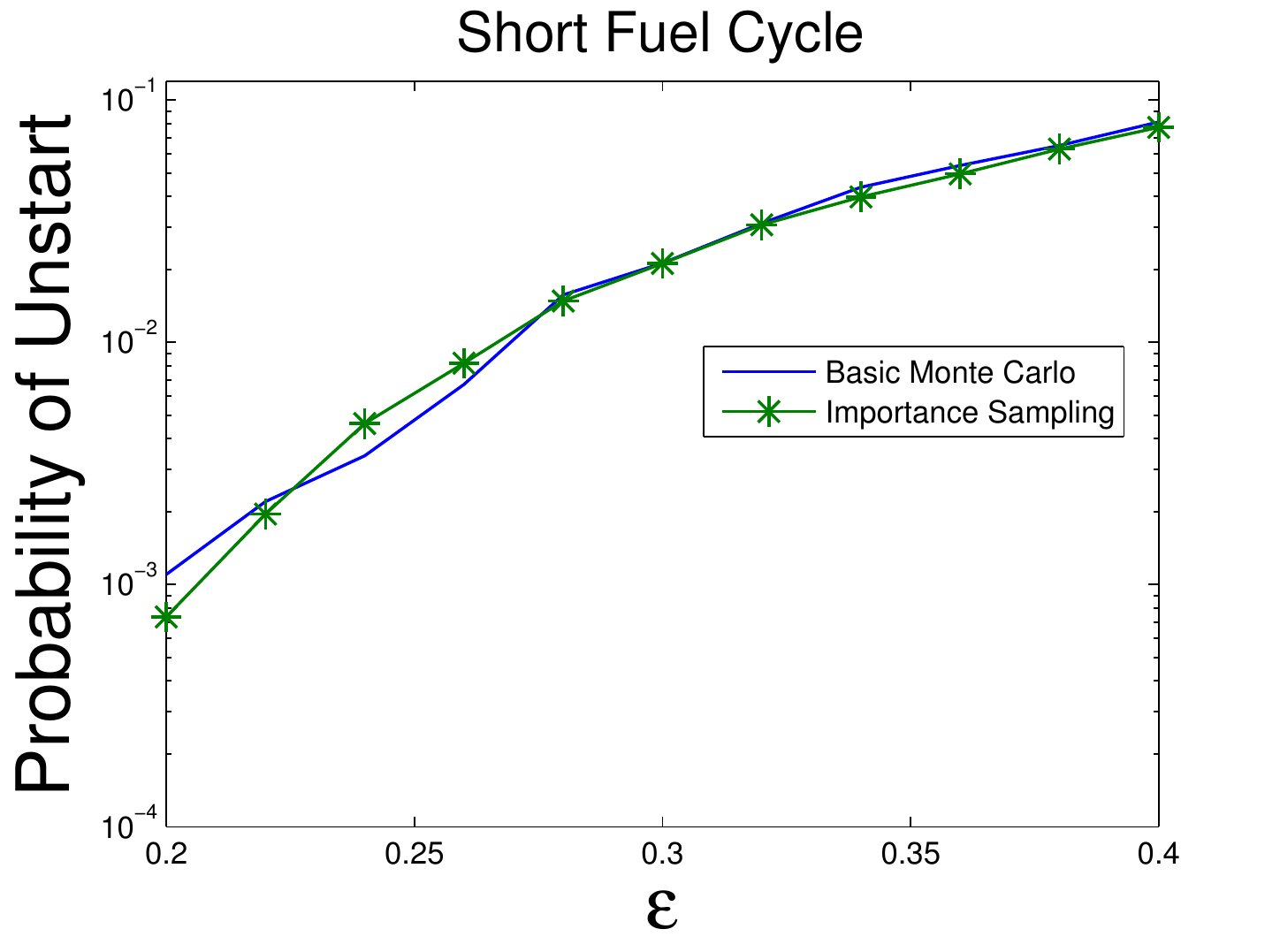}
	\includegraphics[width=0.49\textwidth]{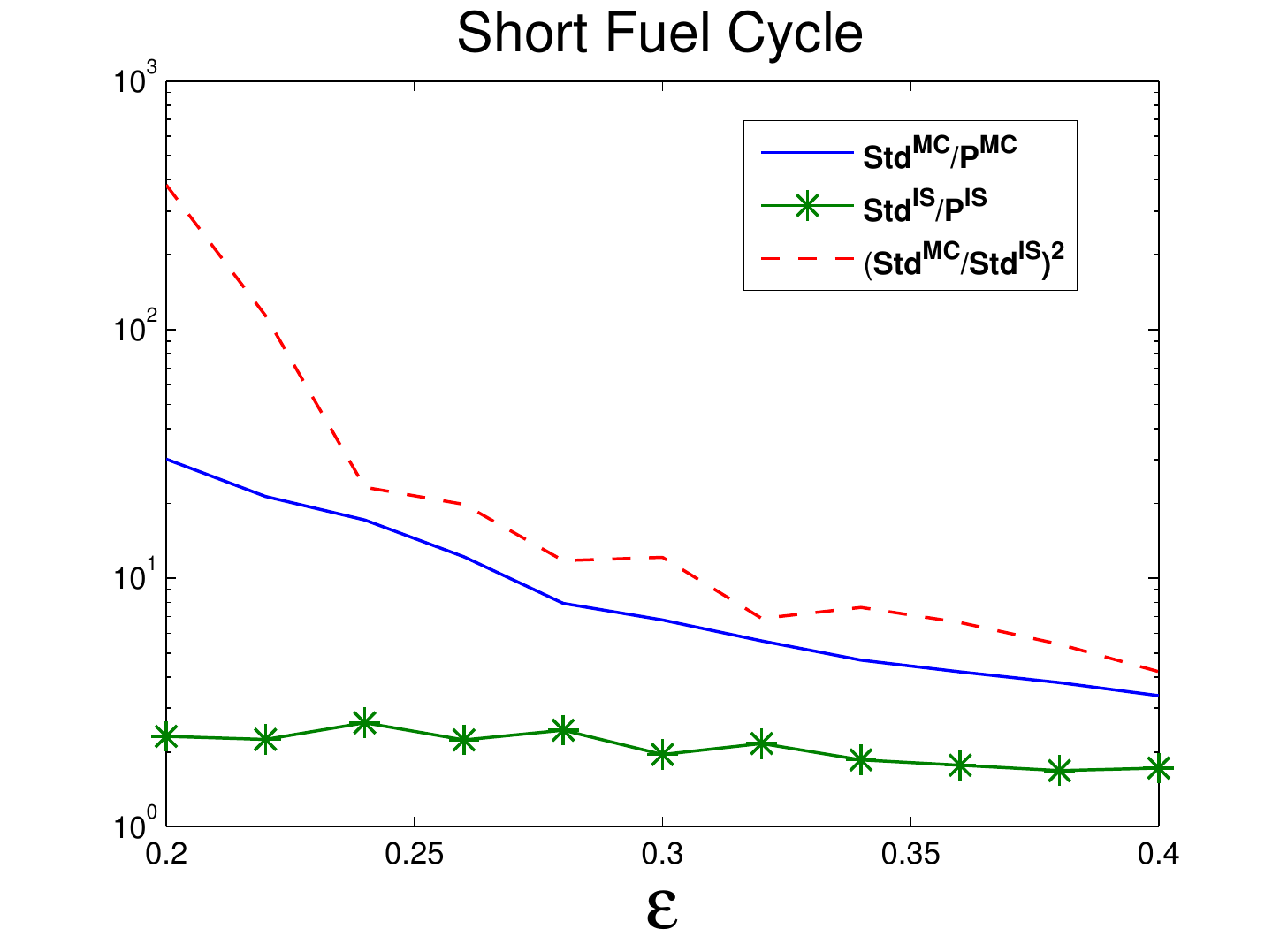}
	\caption{
	\label{fig:Monte Carlo for unstart, short fuel cycle}
	The basic Monte Carlo method and the importance sampling estimator for the probability of the unstart 
	when the short fuel cycle is used (the top figures). The error bar are the $99\%$ confidence intervals. 
	The bottom left figure is the plot of the estimated probabilities in the log scale. The bottom right  
	figure is the plot of the relative errors $\Std^{MC}_J/\hat{P}^{MC}$ and $\Std^{IS}_J/\hat{P}^{IS}$, and 
	the ratio of $\Std^{MC}_J$ to $\Std^{IS}_J$, which is the factor of the improvement of $\Std^{IS}_J$.}
\end{figure}

\begin{figure}
	\centering
	\includegraphics[width=0.49\textwidth]{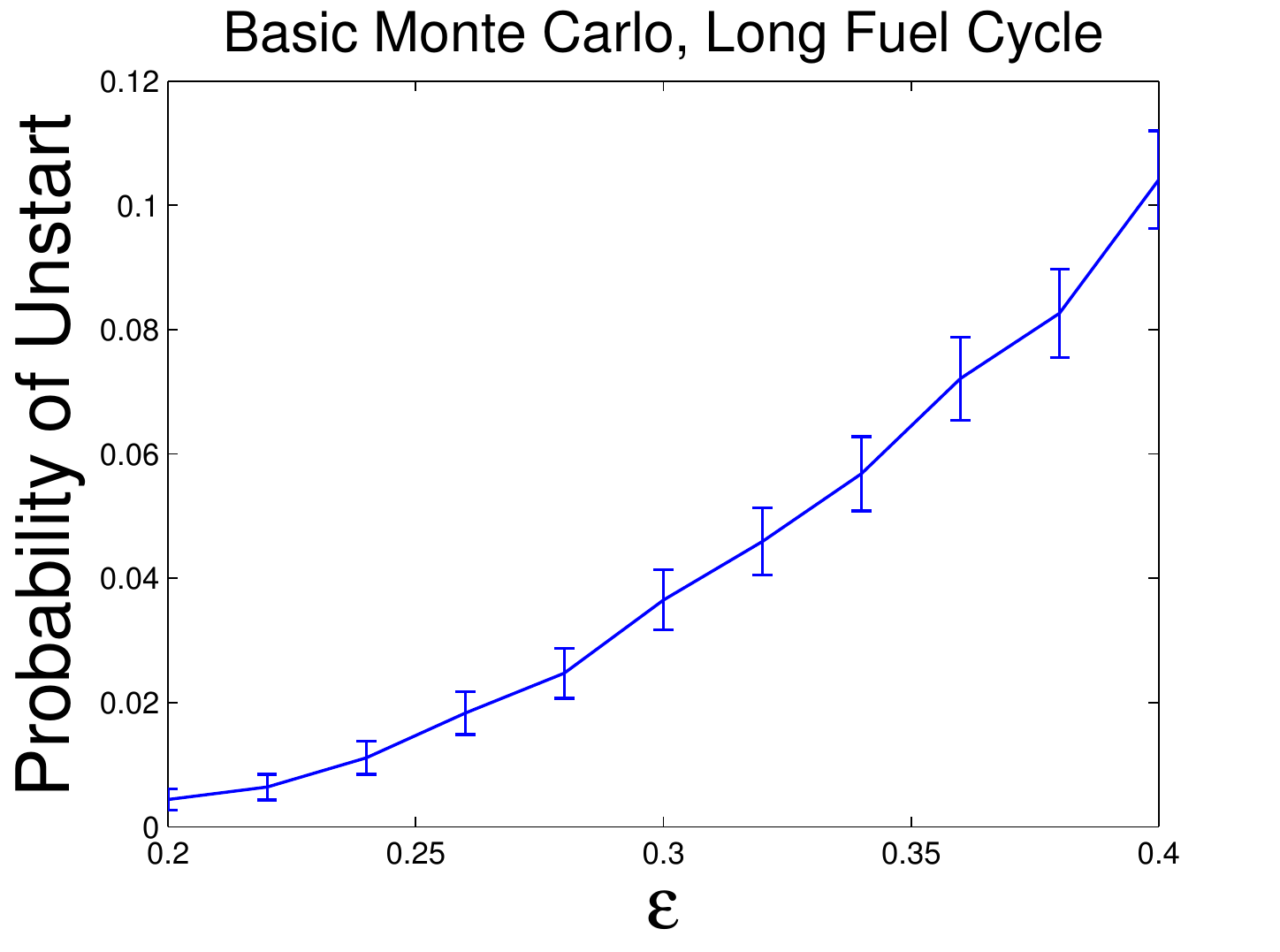}
	\includegraphics[width=0.49\textwidth]{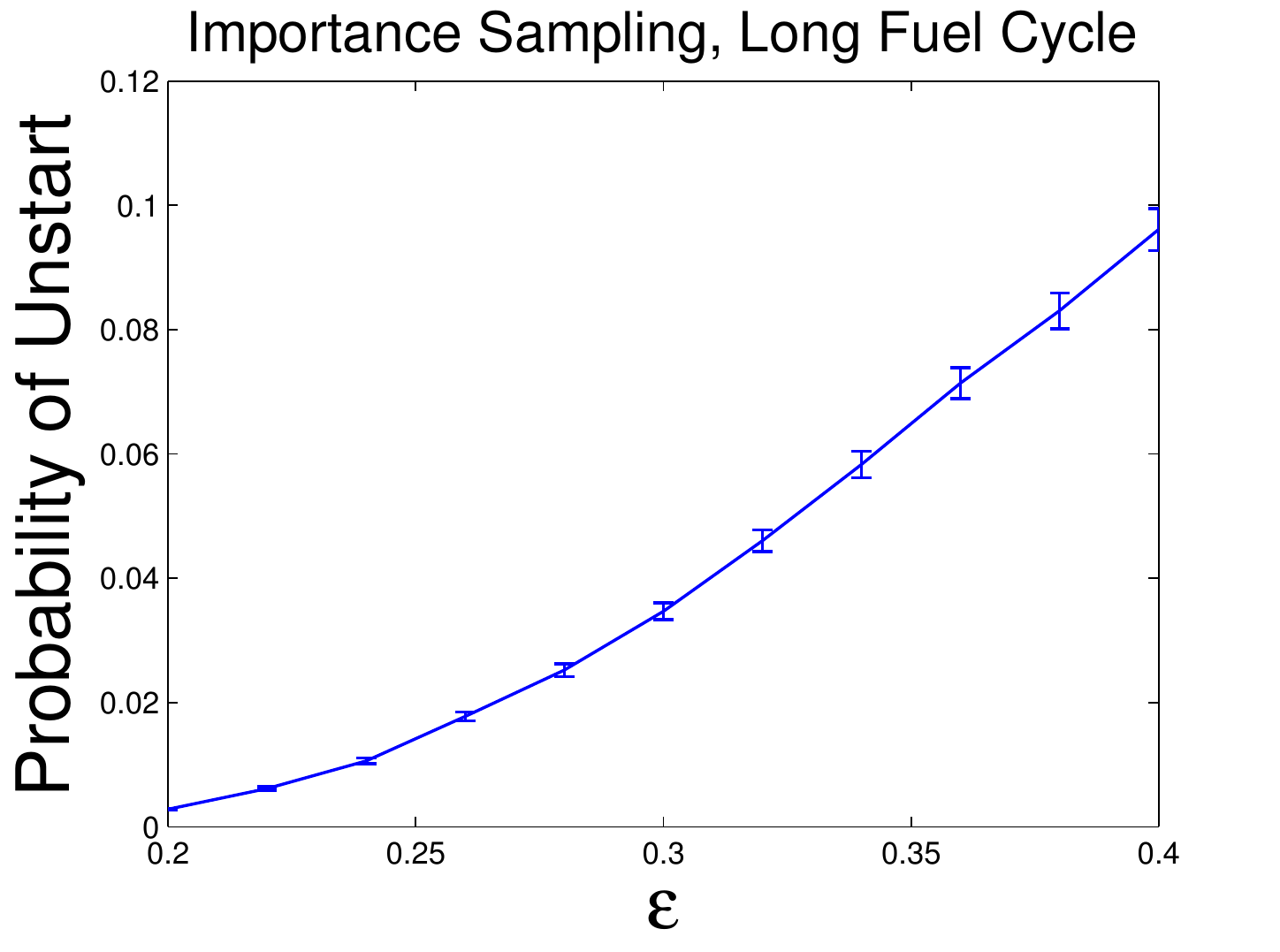}

	\includegraphics[width=0.49\textwidth]{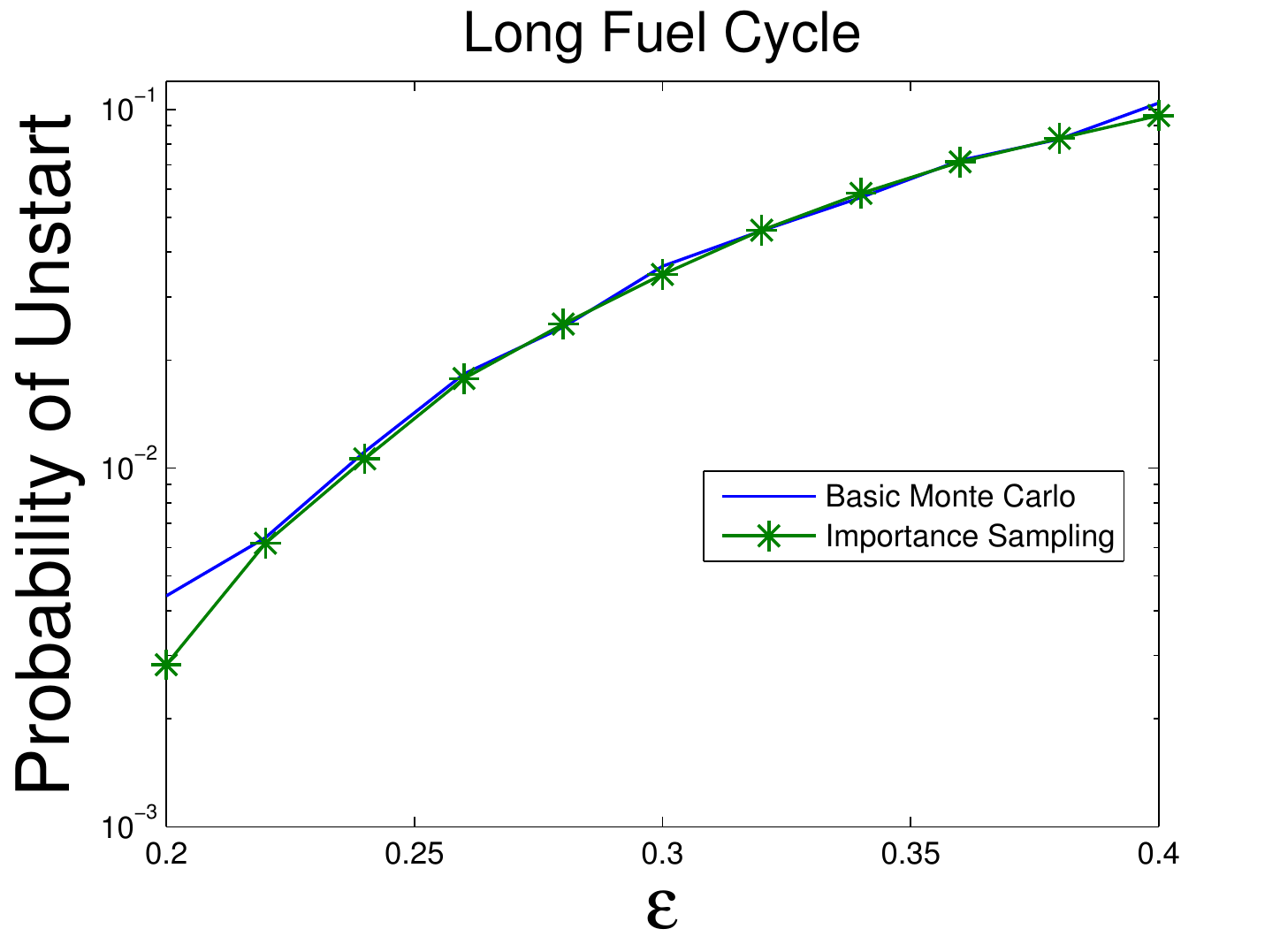}
	\includegraphics[width=0.49\textwidth]{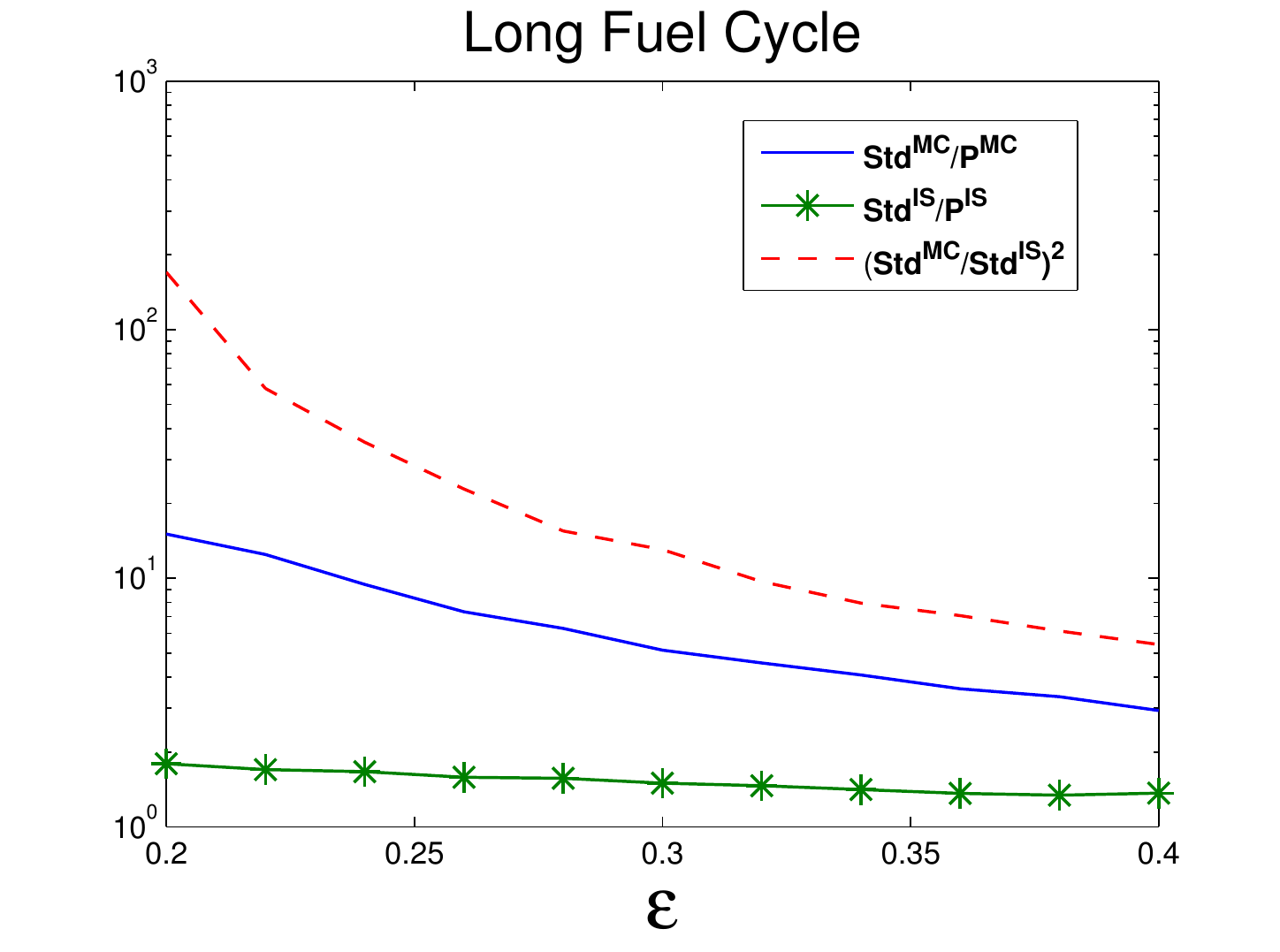}
	\caption{
	\label{fig:Monte Carlo for unstart, long fuel cycle}
	The basic Monte Carlo method and the importance sampling estimator for the probability of the unstart 
	when the long fuel cycle is used (the top figures). The error bar are the $99\%$ confidence intervals. 
	The bottom left figure is the plot of the estimated probabilities in the log scale. The bottom right  
	figure is the plot of the relative errors $\Std^{MC}_J/\hat{P}^{MC}$ and $\Std^{IS}_J/\hat{P}^{IS}$, and 
	the ratio of $\Std^{MC}_J$ to $\Std^{IS}_J$, which is the factor of the improvement of $\Std^{IS}_J$.}
\end{figure}

From Figure \ref{fig:Monte Carlo for unstart, short fuel cycle} and 
\ref{fig:Monte Carlo for unstart, long fuel cycle} we see that $\hat{P}^{IS}$ has the more accurate 
estimates; the improvement of $\hat{P}^{MC}$ increases when the estimated probability decreases. We also 
note that although the importance sampling is intrinsically designed for the small $\eps$ situations, our 
simulations show that it also improves the non-small $\eps$ cases. Because the rare of convergence of 
$\hat{P}^{MC}$ and $\hat{P}^{IS}$ are $1/\sqrt{J}$, the improvement of the speed is the square of the ratio 
of the standard deviations $\Std^{MC}_J/\Std^{IS}_J$. Then we see that the improvement of the speed of 
$\hat{P}^{IS}$ ranges from a factor of $4.2$ ($\eps=0.4$) to a factor of $381.5$ ($\eps=0.2$) when the short 
is used, and from a factor of $5.4$ ($\eps=0.4$) to a factor of $170.3$ ($\eps=0.2$) if we use the long fuel 
cycle. Roughly speaking, the ratio of the improvement of is proportional to $1/\PP(\bfA)$.
\section{Conclusion}
\label{sec:conclusion}

In this paper, we use the large deviation principle to analyze the probability of unstart of a scramjet due 
to the random perturbation of the inflow. The numerical analysis is performed under various comparisons: the 
impact of the fueling schedules, the sensitivity to the constraint sets and the effect of the engine 
geometry, some of which are also confirmed and are consistent in the previous literature using the Monte 
Carlo method (\cite{Iaccarino2011,West2011}); namely, the central analysis and the large deviation analysis 
have the high consistency on the region of operation. Further, the large deviation analysis gives a sharper 
information by providing the most probable inflow perturbation that causes unstart.

We also implement the large-deviation-based importance sampling to overcome the limitation of the basic 
Monte Carlo method. Our numerical results show that when the probability of unstart is small but still not
negligible (for example, the order of $10^{-3}$ or even $10^{-4}$), the importance sampling is significantly 
better than the basic Monte Carlo. The theoretical ratio of the improvement can be up to the reciprocal of 
the estimated probability.

\section*{Acknowledgment}

This work is partly supported by the Department of Energy [National Nuclear Security Administration] under 
Award Number NA28614, and partly by AFOSR grant FA9550-11-1-0266.

\appendix
\section{List of Parameters}
\label{sec:parameters}
The listed parameters are the default values. We will mention the changes in the main text if different 
values are used.

{\small
\centering
\begin{longtable}{|l|l|r|l|}
	\hline
	Variable & Name & Value & Units \\
	\hline
	State Variable &&&\\
	\hline
	$\rho$ & Density & - & $kg/m^3$ \\
	$u$ & Flow Speed & - & $m/s$\\
	$u_{in}$ & Inflow Speed & - & $m/s$\\
	$\rho u$ & Mass Flow & - & $kg / m^2 s$ \\
	$E$ & Energy Density & - & $J/m^3$ \\
	$P$ & Pressure & - & Pascals \\
	$M$ & Mach Number & - &\\
	$M_{in}$ & Inflow Mach Number & - &\\
	$\gamma$ & Ratio of Specific Heats & $1.4$ &  \\
	\hline
	Geometry &&& \\ 
	\hline
	$A_0$ & Minimum Cross Sectional Area of Engine & $0.008$ & $m^2$\\
	$L_I$ & Inlet Length & $0.5$ & $m$ \\
	$L_C$ & Combustor Length & $0.1$ & $m$ \\
	$L_E$ & Expansion Region Length & $0.1$ & $m$ \\
	$\theta_I$ & Angle of Inlet & $0.0$ & Degrees \\
	$\theta_C$ & Angle of Combustor & $7.5$ & Degrees \\
	$\theta_E$ & Angle of Expansion Region & $15.0$ & Degrees \\ \hline
	Fueling &&& \\ 
	\hline
	$\phi$ & Mixing Ratio & $0.78$ &  \\
	$f_{stoch}$ & Stochiometirc Fuel/Air Ratio & $0.029$ & \\
	$H_{prop}$ & Fuel Heating Value & $1.2 \times 10^8$& $J/Kg$\\
	$\rho_0$ & Free Stream Density - Taken as Inflow Density & $0.159$ & $kg/m^3$\\
	$u_0$ & Free Stream Velocity - Take as Inflow Velocity & $1300.0$ & $m/s$\\ 
	$P_0$ & Free Stream Pressure - Take as Inflow Pressure & $47842.0$ & Pascals\\
	$\tau_S$ & Short Fuel Cycle Length & $0.5$ & $ms$ \\
	$b_S$ & Short Fuel Burst Length & $0.1$ & $ms$ \\
	$\tau_L$ & Long Fuel Cycle Length & $2$ & $ms$ \\
	$b_L$ & Long Fuel Burst Length & $0.4$ & $ms$ \\
	\hline
	Numerics &&& \\ 
	\hline
	$T$ & Terminal Time & $0.01$ & $s$ \\
	$K$ & Number of Cells & $100$ & \\
	$N$ & Number of Time Grids for Large Deviations & $10^4$ & \\
	$\Delta t$ & Time Increment of the Euler schemes
	(\ref{eq:general SDE, discrete time}) (\ref{eq:full order u_in}) & $10^{-6}$ & $s$\\
	$\tilde{N}$ & Resolutions of $\tilde{u}_{in}$ in (\ref{eq:reduced order u_in}) & $20$ &\\
	$\sigma_u$ & Volatility of $u_{in}$ & $10^4$ & $m/s^{3/2}$\\
	$\sigma_M$ & Volatility of $M_{in}$ & $96.9020$ & $s^{-1/2}$\\
	\hline
	\caption{Table of Numerical Values}
	\label{tbl:values}
\end{longtable}}

\section{Numerical PDE Methods of the Governing Equation}
\label{sec:component-wise LLF}
In this section, we describe the numerical method to solve the governing equation (\ref{eq:qce}):
\begin{equation*}
	\u_t + \f_x 
	= \frac{A'(x)}{A(x)} \left( \begin{pmatrix} 0 \\ P \\ 0 \\ \end{pmatrix} - \f\right)
	+ \begin{pmatrix} 0 \\ 0 \\ f(x,t) \\ \end{pmatrix}.
\end{equation*}
Given a spatial discretization: $-L_I=x_0<\cdots<x_K=L_C+L_E$, $X^n_k$ denotes the average of the quantity X 
over the cell $(x_k,x_{k+1})$ at time $t_n$. We use the component-wise, first order local Lax-Friedrichs 
(LLF) scheme \cite[Algoritm 4.4]{Shu1999} to solve (\ref{eq:qce}):
\begin{multline}
	\label{eq:first order Euler method for the PDE}
	\begin{pmatrix} \rho_k^{n+1} \\ \rho_k^{n+1} u_k^{n+1} \\ E_k^{n+1} \\ \end{pmatrix}
	= \begin{pmatrix} \rho_k^n \\ \rho_k^n u_k^n \\ E_k^n \\ \end{pmatrix}
	- \frac{h}{\Delta x}(\mathcal{F}^n_{k+1/2}-\mathcal{F}^n_{k-1/2})\\
	+ h\frac{A'(x_{k+1/2})}{A(x_{k+1/2})} \left( \begin{pmatrix} 0 \\ P_k^n \\ 0 \\ \end{pmatrix}
	- \begin{pmatrix} \rho_k^n u_k^n \\ \rho_k^n (u_k^n)^2 + P_k^n \\  (E_k^n + P_k^n)u_k^n \\ \end{pmatrix} 
	\right)
	+ h\begin{pmatrix} 0 \\ 0 \\ f(x_{k+1/2},t_n) \\ \end{pmatrix},
\end{multline}
where $x_{k+1/2}=0.5(x_k+x_{k+1})$, $P_k^n=(\gamma-1)(E_k^n-\rho_k^n (u_k^n)^2/2)$.
$\mathcal{F}^n_{k\pm 1/2}$ are the numerical fluxes generated by the component-wise LLF method:
\begin{align*}
	\label{eq:component-wise LLF}
	\mathcal{F}^n_{k+1/2} 
	&= \frac{1}{2}\left(
	\begin{pmatrix} 
		\rho_{k+1}^n u_{k+1}^n \\ 
		\rho_{k+1}^n (u_{k+1}^n)^2 + P_{k+1}^n \\
		(E_{k+1}^n + P_{k+1}^n)u_{k+1}^n \\ 
	\end{pmatrix}
	+ 
	\begin{pmatrix} 
		\rho_k^n u_k^n \\ 
		\rho_k^n (u_k^n)^2 + P_k^n \\  
		(E_k^n + P_k^n)u_k^n \\ 
	\end{pmatrix}
	\right)\\
	&\quad - \frac{1}{2}\max\{|c_k^n+u_k^n|,|c_{k+1}^n+u_{k+1}^n|\}\left(
	\begin{pmatrix}
		\rho_{k+1}^n \\ 
		\rho_{k+1}^n u_{k+1}^n \\ 
		E_k^n \\ 
	\end{pmatrix}
	- 
	\begin{pmatrix} 
		\rho_k^n \\ 
		\rho_k^n u_k^n \\ 
		E_k^n \\ 
	\end{pmatrix}
	\right),
\end{align*}
where $c_k^n=\sqrt{\gamma P_k^n/\rho_k^n}$ is the speed of sound. Because the component-wise scheme works 
well for the low-order methods (see \cite{Shu1999}), we use it to reduce the computational cost.

For the inflow conditions, we let $\rho_0^n=\rho_0$ and $P_0^n=P_0$ for all $n$, and $u_0^n$ is governed by 
the stochastic inflow speed $u_{in}(n)$. For the outflow condition, the simple extrapolation is used:  
$(\rho_K^{n+1},u_K^{n+1},E_K^{n+1})=(\rho_{K-1}^n,u_{K-1}^n,E_{K-1}^n)$.

The following strategy is used to find the suitable initial condition: we simulate the numerical PDE with 
$\rho(0,x)\equiv\rho(t,-L_I)\equiv\rho_{0}$, $u(0,x)\equiv u(t,-L_I)\equiv u_0$, 
$E(0,x)\equiv E(t,-L_I)\equiv E_0$ and $f(t,x)\equiv 0$. Then with the aforementioned outflow extrapolation,
the numerical solution obtained by this setting has the equilibrium state $(\rho^e(x),u^e(x),E^e(x))$ and 
use this state as the initial condition for the LDP. 

In Section \ref{sec:LDP for unstart} and \ref{sec:numerical result of LD}, the time increment $h$ is taken 
as the uniform constant $\Delta t$ in (\ref{eq:full order u_in}). This is because the uniform time grid 
results in a smoother constraint set $\bfA$ in (\ref{eq:the set of unstart}), which increase the robustness 
of the numerical optimization $\inf_{\tilde{u}_{in}\in\bfA}I(\tilde{u}_{in})$. Because the inflow condition 
$\tilde{u}_{in}$ is a controlled variable, we can choose a sufficiently small $\Delta t$ so that the 
numerical scheme is stable when we solve $\inf_{\tilde{u}_{in}\in\bfA}I(\tilde{u}_{in})$. On the other hand, 
in Section \ref{sec:importance sampling}, as the inflow condition is random and not controlled, we use the 
adaptive time increment:
\[
	h^n = 0.8\times\frac{\Delta x}{\max_k|c_k^n+u_k^n|},
\]
to satisfy the CFL condition and avoid the instabilities due to extraordinary inflow conditions.

\bibliographystyle{siam}
\bibliography{reference}

\begin{thebibliography}{10}

\bibitem{Buchmann1979}
{\sc O.A. Buchmann}, {\em {T}hermal-{S}tructural {D}esign {S}tudy of an
  {A}irframe-{I}ntegrated {S}cramjet}, Tech. Report 3141, NASA, 1979.

\bibitem{Bussing1983}
{\sc T.R.A. Bussing and E.M. Murman}, {\em {A} {O}ne-{D}imensional {U}nsteady
  {M}odel of {D}ual {M}ode {S}cramjet {O}peration}, in Proceedings of the 21st
  aerospace Sciences Meeting, AIAA, 1983.

\bibitem{Dembo2010}
{\sc A.~Dembo and O.~Zeitouni}, {\em {L}arge deviations techniques and
  applications}, vol.~38 of Stochastic Modelling and Applied Probability,
  Springer-Verlag, Berlin, 2010.

\bibitem{E2004}
{\sc W.~E, W.~Ren, and E.~Vanden-Eijnden}, {\em {M}inimum action method for the
  study of rare events}, Comm. Pure Appl. Math., 57 (2004), pp.~637--656.

\bibitem{Iaccarino2011}
{\sc G.~Iaccarino, R.~Pecnik, J.~Glimm, and D.~Sharp}, {\em {A} {QMU} approach
  for characterizing the operability limits of air-breathing hypersonic
  vehicles}, Reliability Engineering \& System Safety, 96 (2011), pp.~1150 --
  1160.

\bibitem{Jameson1994}
{\sc A.~Jameson and J.~Reuther}, {\em {C}ontrol {T}heory {B}ased {A}irfoil
  {D}esign using {E}uler {E}quations}, in Proceedings of AIAA/USAF/NASA/ISSMO
  Symposium on Multidisciplinary Analysis and Optimization, 1994.

\bibitem{Nocedal2006}
{\sc J.~Nocedal and S.J. Wright}, {\em {N}umerical optimization}, Springer
  Series in Operations Research and Financial Engineering, Springer, New York,
  second~ed., 2006.

\bibitem{OByrne2000}
{\sc S.~O'Byrne, S.R. Doolan, and A.F.P. Houwing}, {\em {A}nalysis of
  {T}ransient {T}hermal {C}hoking {P}rocesses in a {M}odel {S}cramjet
  {E}ngine}, J. Prop. Power, 16 (2000), pp.~808--814.

\bibitem{Oksendal2003}
{\sc B.~{\O}ksendal}, {\em {S}tochastic differential equations}, Universitext,
  Springer-Verlag, Berlin, sixth~ed., 2003.

\bibitem{Riggins2006}
{\sc D.~Riggins, R.~Tackett, and T.~Taylor}, {\em {T}hermodynamic {A}nalysis of
  {D}ual-{M}ode {S}cramjet {E}ngine {O}peration and {P}erformance}, in
  Proceedings of the 14th AIAA/AHI Space Planes and Hypersonic Systems and
  Technologies Conference, AIAA, 2006.

\bibitem{Sato1992}
{\sc T.~Sato and S.~Kaji}, {\em {S}tudy on {S}teady and {U}nsteady {U}nstart
  {P}henomena due to {C}ompound {C}hoking and/or {F}luctuations in {C}ombustor
  of {S}cramjet {E}ngines}, in Proceedings of the Fourth International
  Aerospace Planes Conference, AIAA, 1992.

\bibitem{Shapiro1953}
{\sc A.H. Shapiro}, {\em {T}he {D}ynamics and {T}hermodynamics of
  {C}ompressible {F}luid {F}low ({V}olume 1)}, Wiley, 1953.

\bibitem{Shu1999}
{\sc C.-W. Shu}, {\em {H}igh order {ENO} and {WENO} schemes for computational
  fluid dynamics}, in High-order methods for computational physics, vol.~9 of
  Lect. Notes Comput. Sci. Eng., Springer, Berlin, 1999, pp.~439--582.

\bibitem{Wang2012}
{\sc Q.~Wang, K.~Duraisamy, J.J. Alonso, and G.~Iaccarino}, {\em {R}isk
  {A}ssessment of {S}cramjet {U}nstart {U}sing {A}djoint-{B}ased {S}ampling
  {M}ethods}, AIAA Journal, 50 (2012), pp.~581--592.

\bibitem{West2011}
{\sc N.~West, G.~Papanicolaou, P.~Glynn, and G.~Iaccarino}, {\em {A}
  {N}umerical {S}tudy of {F}iltering and {C}ontrol for {S}cramjet {E}ngine
  {F}low}, in 20th AIAA Computational Fluid Dynamics Conference, vol.~4, 2011,
  pp.~3010--3028.

\bibitem{Zhou2008}
{\sc X.~Zhou, W.~Ren, and W.~E}, {\em {A}daptive minimum action method for the
  study of rare events}, The Journal of Chemical Physics, 128 (2008),
  p.~104111.

\end{thebibliography}

\end{document}